\documentclass[reqno]{amsart}

\usepackage{amsmath,amssymb,amsthm,mathtools,enumitem,setspace}
\usepackage{hyperref}
\usepackage{needspace}
\AddToHook{env/dmlconjecture/before}{\Needspace{8\baselineskip}}
\AddToHook{env/theorem/before}{\Needspace{8\baselineskip}}
\AddToHook{env/lemma/before}{\Needspace{8\baselineskip}}
\AddToHook{env/proposition/before}{\Needspace{8\baselineskip}}
\AddToHook{env/corollary/before}{\Needspace{8\baselineskip}}
\AddToHook{env/example/before}{\Needspace{8\baselineskip}}
\AddToHook{env/definition/before}{\Needspace{6\baselineskip}}

\hypersetup{
  colorlinks=true,
  linkcolor=blue,
  citecolor=blue,
  urlcolor=blue,
  linktoc=all,
  bookmarksopen=true,
  bookmarksnumbered=true
}

\numberwithin{equation}{section}
\newtheorem{theorem}{Theorem}[section]
\newtheorem{lemma}[theorem]{Lemma}
\newtheorem{proposition}[theorem]{Proposition}
\newtheorem{corollary}[theorem]{Corollary}
\newtheorem{dmlconjecture}[theorem]{Conjecture}
\theoremstyle{definition}
\newtheorem{definition}[theorem]{Definition}
\newtheorem{example}[theorem]{Example}
\theoremstyle{remark}
\newtheorem{remark}[theorem]{Remark}
\usepackage[left=1.3in, right=1.3in, top=1.1in, bottom=1.1in]{geometry}

\newcommand{\Nzero}{\mathbb N_0}
\newcommand{\A}{\mathbb A}
\newcommand{\Pj}{\mathbb P}
\newcommand{\cI}{\mathcal I}
\newcommand{\cO}{\mathcal O}
\newcommand{\cK}{\mathcal K}
\newcommand{\cF}{\mathcal F}
\newcommand{\comp}{\operatorname{comp}}

\begin{document}

\title[Ramified Skew Products over $\mathbb C$]{Dynamical Mordell--Lang Conjecture for Higher-Rank Radially Ramified Skew Products}

\author{Bhawesh Mishra}
\email{bhaweshmishra2024@gmail.com}
\address{Previously at: 384 Dunn Hall, Department of Mathematical Sciences, The University of Memphis, Memphis, TN 38157}
\curraddr{Ramjanki Path, Biratnagar, Morang, Kosi, Nepal 56613}

\subjclass[2020]{Primary 37P55, 37P05; Secondary 11B37, 11S82}
\keywords{dynamical Mordell--Lang; ramified polynomial skew products;
higher-rank fibers; critical absorption; blow-ups; $p$-adic interpolation}
\date{}

\begin{abstract}
We establish the dynamical Mordell--Lang conjecture over the complex
numbers for a family of radially ramified polynomial skew products
in arbitrary base dimension and fiber rank. We assume that one fixed
iterate sends the vertical critical locus into the invariant zero
section. Our result covers affine-linear bases and, under a degree-gap
condition, nonlinear \'{e}tale polynomial bases.
We also establish a uniform Skolem--Mahler--Lech theorem for a fixed
nonlinear orbit and a fixed polynomial-coefficient linear recurrence
whose trailing coefficient is a nonzero constant. The theorem applies
to joint polynomial and rational relations involving the orbit and
finitely many successive terms of any solution. A single eventual
period works for every such solution and relation, although the finite
exceptional set may depend on the relation. Our proof combines
controlled nonarchimedean realizations with an
orbitwise transfer theorem for superattracting line-bundle dynamics.
\end{abstract}

\maketitle

\section{Introduction}

The dynamical Mordell--Lang conjecture concerns the intersection of a
forward orbit under an endomorphism with a closed subvariety. For an
endomorphism $f:X\to X$ of a quasi-projective variety, a point $x\in X$,
and a closed subvariety $V\subseteq X$, it predicts that the indices $n$
for which $f^n(x)\in V$ form a finite union of arithmetic progressions.
We prove this conclusion for a family of higher-rank radially ramified
skew products. This arithmetic-progression structure extends the
pattern familiar from the Skolem--Mahler--Lech theorem and from
cyclic-subgroup intersections in classical Mordell--Lang
\cite{Lech1953,Faltings1994,Vojta1996,DMLBook}.

\begin{definition}[Iterates and return sets]\label{def:return-set}
Let $f:X\to X$ be an endomorphism of a quasi-projective variety, let
$x$ be a point of $X$, and let $V\subseteq X$ be closed. Write
$\Nzero=\{0,1,2,\ldots\}$, let $f^n$ denote the $n$th iterate of $f$
(with $f^0=\mathrm{id}_X$, the identity map of $X$), and define the
\emph{return set}
$S_f(x,V)=\{n\in\Nzero:f^n(x)\in V\}$.
\end{definition}

We use the following formulation of the conjecture, due to Ghioca and
Tucker \cite{Ghioca2009}.

\begin{dmlconjecture}[Dynamical Mordell--Lang]\label{conj:dml}
Let $K$ be a field of characteristic zero, let $X$ be a
quasi-projective $K$-variety, let $f:X\to X$ be a $K$-endomorphism,
let $x\in X(K)$, and let $V\subseteq X$ be closed. Then $S_f(x,V)$ is
a finite union of sets
$a+b\Nzero=\{a+bn:n\in\Nzero\}$, with $a,b\in\Nzero$.
The case $b=0$ is allowed and represents a singleton.
\end{dmlconjecture}

Thus a forward orbit meets a closed subvariety along finitely many
entire arithmetic subsequences and at only finitely many additional
times. The orbit itself need not be periodic. The conclusion
distinguishes persistent algebraic intersections from isolated
coincidences rather than merely bounding the frequency of visits
\cite{DMLBook}.

Our main theorem establishes the conjecture for a nonlinear ramified
family in arbitrary base dimension and fiber rank. The radial form and
the critical-absorption condition are defined in
Definitions~\ref{def:radial-skew-product}
and~\ref{def:critical-absorption}.

\begin{theorem}\label{thm:main}
Let $k$ be an algebraically closed field of characteristic zero, and
let $m\geq0$ and $r\geq1$. Let
$T:\A^m_k\to\A^m_k$ be an affine-linear endomorphism, and choose
$N\geq0$ such that
$S=T^N(\A^m)=T^{N+1}(\A^m)$.
Let
$M(u)\in\operatorname{GL}_r(k[u_1,\ldots,u_m])$, let $d\geq2$, and
let
\[
 0\ne\rho(u,\boldsymbol z)\in
 (z_1,\ldots,z_r)^{d-1}
 k[u_1,\ldots,u_m,z_1,\ldots,z_r].
\]
Define
$F(u,\boldsymbol z)=
\bigl(T(u),\rho(u,\boldsymbol z)M(u)\boldsymbol z\bigr)$.

On $S\times\A^r$, put $Z=S\times\{0\}$ and
$C=V\!\left(
\det D_{\boldsymbol z}
\bigl(\rho(u,\boldsymbol z)M(u)\boldsymbol z\bigr)
\right)$.
If, for some $\ell\geq1$, one has the set-theoretic inclusion
$\bigl(F|_{S\times\A^r}\bigr)^\ell(C)\subseteq Z$,
then $F$ satisfies the Dynamical Mordell--Lang conjecture.
Equivalently, for every $x\in\A^{m+r}(k)$ and every closed subvariety
$V\subseteq\A^{m+r}_k$, the set
$\{n\in\Nzero:F^n(x)\in V\}$ is a finite union of arithmetic
progressions.
\end{theorem}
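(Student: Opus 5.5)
We would first reduce to the case where the orbit lives over $S$. Since $T^N(\A^m)=S$ and $T|_S$ is an affine-linear automorphism of the affine subspace $S$, replacing $x$ by $F^N(x)$ changes the return set only by a shift and finitely many initial terms. So we may assume $x=(u_0,\boldsymbol z_0)\in S\times\A^r$ and work with $F|_{S\times\A^r}$. Choosing affine coordinates on $S$, we may further assume $S=\A^{m'}$ and that $T$ is an affine automorphism. Next we split according to whether the orbit eventually meets the zero section $Z$. Writing $F^n(x)=(T^n(u_0),\boldsymbol z_n)$, the fiber recursion is $\boldsymbol z_{n+1}=\rho(u_n,\boldsymbol z_n)M(u_n)\boldsymbol z_n$. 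Because $Z$ is invariant, once some $\boldsymbol z_n=0$ the orbit stays in $Z\cong S$, where the dynamics is affine-linear. DML for affine-linear maps of affine space is known: it reduces to Skolem--Mahler--Lech for the linear recurrence $u_n$. Similarly, if the orbit meets $C$ at some time $n_0$, then by critical absorption $F^{n_0+\ell}(x)\in Z$ and we are again in the linear case. So the essential case is an orbit that never meets $C\cup Z$.

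In that case we pass to a nonarchimedean realization. We choose a finitely generated $\mathbb Z$-algebra containing all coefficients, $u_0$, $\boldsymbol z_0$ and the coefficients of $V$, and embed it into $\mathbb C_p$ for a suitable prime $p$. The prime is chosen so that $T$ has good reduction and $M(u)\in\operatorname{GL}_r$ with $\det M$ a $p$-adic unit constant (note $\det M\in k^\times$ since $M$ is invertible over the polynomial ring). We also require the leading data of $\rho$ to be integral, and the base orbit to lie in a residue polydisc on which some iterate $T^b$ is $p$-adically close to the identity. The standard Bell--Ghioca--Tucker interpolation then gives $p$-adic analytic maps $n\mapsto T^{a+bn}(u_0)$. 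For the fibers we argue by norm. If $|\boldsymbol z_n|_p$ tends to $0$, the orbit is in the superattracting basin of $Z$. If it stays bounded away from $0$ and $\infty$, the orbit avoids the critical set $p$-adically. Using the absorption hypothesis (via the Nullstellensatz, $\det D_{\boldsymbol z}$ divides a power of a quantity vanishing on $F^{-\ell}(Z)$), one shows that the reduced fiber map is étale along the reduced orbit for a good $p$. Interpolation then applies to $F^b$ on a residue polydisc, so the return set is a finite union of progressions by the $p$-adic analytic arc lemma.

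The main obstacle is the superattracting regime, where $\boldsymbol z_n\to0$ at rate roughly $|\boldsymbol z_n|\sim c^{d^n}$ and analytic interpolation fails. Here we would linearize the fiber dynamics à la Böttcher in the line-bundle sense. Write $\boldsymbol z_n=\lambda_n\boldsymbol w_n$ with $\boldsymbol w_n$ in projective space $\Pj^{r-1}$, in effect blowing up $Z$. The homogeneous leading part $\rho_{d-1}$ of $\rho$ induces a map $[\boldsymbol w]\mapsto[M(u)\boldsymbol w]$ on the exceptional divisor, which is a linear skew product over $T$. Critical absorption should force the $\rho_{d-1}$-factor to be nonvanishing along the orbit of directions, since its zero locus lies in $C$. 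The direction orbit is then governed by a $\operatorname{GL}_r$-cocycle over an affine base, and the scalar $\lambda_n$ satisfies $\lambda_{n+1}=\lambda_n^{d}\,\rho_{d-1}(u_n,\boldsymbol w_n)(1+O(\lambda_n))$. Membership $F^n(x)\in V$ then becomes the vanishing of a polynomial in $(u_n,\boldsymbol w_n,\lambda_n)$. Expanding in the rapidly decaying $\lambda_n$ and using a gap principle, all but finitely many $n$ force the vanishing of each homogeneous coefficient separately. These coefficient conditions are conditions on the linear/analytic pair $(u_n,\boldsymbol w_n)$ alone, which satisfies DML by the interpolation above. I expect making this transfer rigorous (uniform error terms, and handling $\rho_{d-1}$ possibly vanishing on the direction orbit by iterated blow-ups) to be the hardest step.
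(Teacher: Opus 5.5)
\textbf{Verdict: there is a genuine gap.} It sits in the superattracting case, at exactly the step you postpone as ``uniform error terms.'' Your preliminary reductions are essentially sound and agree with the paper:
\begin{enumerate}[label=\textup{(\alph*)},leftmargin=*]
\item passing to $S$;
\item disposing of orbits that meet $Z$ or $C$;
\item treating reduced orbits that avoid the reduced zero section, by spreading critical absorption as radical-membership identities so that the reduced map is \'etale along the reduced orbit.
\end{enumerate}

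\textbf{The missing bound.} To conclude from $\sum_j a_j(x_n)\lambda_n^j=0$ that the coefficients vanish, take $j_0$ least with $a_{j_0}(x_n)\neq0$. You need $v(a_{j_0}(x_n))<(j-j_0)\,v(\lambda_n)$ for large $n$. Since $v(\lambda_n)$ is only known to grow like a constant times $d^n$, this requires $v(a_{j_0}(x_n))=o(d^n)$ whenever the value is nonzero. An arbitrary embedding of a finitely generated field with transcendental elements into $\mathbb C_p$ gives no such bound. Interpolation does not supply one either. A nonzero restricted series $h(N)$ may vanish at some $N_0\in\mathbb Z_p\setminus\Nzero$, and then $v(h(n))$ at integers $n$ near $N_0$ is governed by the uncontrolled quantity $v(n-N_0)$.

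\textbf{A concrete failure.} Take $F(u,z)=(u+1,z^2)$, which satisfies the hypotheses with $C=Z$. Take the target $V(uz+z^2)$ and algebraically independent $u_0,z_0$; the return set is then empty. Choose an embedding sending $z_0$ into the maximal ideal and $u_0$ to $-N_0$, where $N_0=\sum_j p^{k_j}$ with $k_j$ growing fast enough. Such an $N_0$ is of Liouville type, hence transcendental, so the embedding exists. Then $v(u_0+n)>2^n v(z_0)=v(z_n)$ for infinitely many $n$. At those indices your leading-term comparison cannot rule out a return.

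\textbf{How the paper supplies it.} This quantitative input is the heart of the proof, not a technicality.
\begin{enumerate}[label=\textup{(\roman*)},leftmargin=*]
\item Proposition~\ref{prop:controlled-place} builds a specific embedding with $v(\iota(\xi))\le A\comp_R(\xi)$ for all $\xi\neq0$. It sends a Noether-normalization transcendence basis to $a_i+X_i$ in the completion for a weighted Gauss valuation with $\mathbb Q$-linearly independent weights, then takes a finite \'etale lift of one residue factor.
\item Lemma~\ref{lem:cocycle} shows that when $T|_S$ is affine, the orbit of $\widehat T$ on $S\times\Pj^{r-1}$ has presentation complexity $O(n^2\log(n+2))$.
\item Theorem~\ref{thm:line-bundle} first uses Strassmann's theorem on the interpolated base orbit, so that each coefficient sequence is identically zero or never zero on tails of progressions. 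It then combines (i) and (ii) to get $v(a_{j_0}(x_n))=O(n^2\log n)=o(d^n)$.
\end{enumerate}
Over $\overline{\mathbb Q}$, heights and the product formula could replace this. For a general $k$ you need something like the controlled place.

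\textbf{Unnecessary complications.} Your B\"ottcher-type asymptotic $\lambda_{n+1}=\lambda_n^d\rho_{d-1}(u_n,\boldsymbol w_n)(1+O(\lambda_n))$ and the iterated blow-ups are not needed. Your claim that critical absorption keeps $\rho_{d-1}$ nonzero along the direction orbit is also unsupported. For $d\ge2$ the entire exceptional divisor already lies in the vertical critical locus of the lift, so ``$\{\rho_{d-1}=0\}\subseteq C$'' carries no information there. The paper uses only the one-sided bound $v(t_{n+1})\ge d\,v(t_n)$, which follows from $t_{n+1}=t_n^d h_n$ with $h_n$ integral, together with $v(a_j(x_n))\ge0$ for the higher coefficients.
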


We illustrate this family with the concrete instance
$F(u,x,y)=\bigl(u+1,(x+uy)^2,(x+uy)y\bigr)$ on $\A^3$, with base
coordinate $u$ and fiber coordinates $x,y$.
Example~\ref{ex:matrix-cocycle} specifies its matrix and radial factor
and verifies critical absorption. The two fiber coordinates are
coupled through both the matrix and the common factor.

The principal distinction between our setting and the higher-rank
skew-linear results of Ghioca--Xie lies in ramification rather than
fiber dimension. Their results already allow arbitrary fiber
dimension over an algebraically closed field of characteristic zero
\cite{GhiocaXieSkewLinear}. In the present family, however, the common
scalar factor depends on the fiber variables, and the fiber
differential vanishes along the zero section. The general \'{e}tale
theorem \cite{Bell2010} is therefore inapplicable. Critical absorption
controls the entire vertical critical locus without removing this
ramification. Theorem~\ref{thm:main} gives the full return-set
conclusion for every closed target in the stated family and is
complementary to, rather than a formal strengthening of, the existing
theorems.

Our proof uses two principal ingredients. First, we blow up the zero
section to separate the projective direction from the radial motion.
Second, we use Proposition~\ref{prop:controlled-place} to construct
nonarchimedean embeddings for which the valuations of varying
algebraic expressions are bounded by their presentation complexity.
In Theorem~\ref{thm:line-bundle}, we compare these bounds with the
superattracting radial contraction. That theorem is formulated for
line-bundle dynamics over an \'{e}tale base map and imposes its
presentation-growth hypothesis only on the chosen base orbit.

We obtain a particularly explicit subfamily when $\rho$ is
homogeneous of fiber degree $d-1$. In that case, the vertical critical
locus is $V(\rho)$ and maps into $Z$ in one step. Consequently,
Corollary~\ref{cor:homogeneous} applies without a separate
critical-absorption hypothesis. For nonlinear \'{e}tale bases, the
degree-gap hypothesis of Corollary~\ref{cor:degree-gap} remains in
force.

\subsection{Notation and terminology}
\label{subsec:terminology}

\begin{definition}[Ambient spaces and algebraic notation]\label{def:ambient-notation}
We write $\A^m_K$, $\Pj^m_K$, and $X(K)$ for affine space, projective
space, and the set of $K$-points, respectively, and omit the ground
field when it is understood. All closed sets and closures are Zariski.
For a commutative ring $B$, write $B^\times$ for its group of units,
$M_N(B)$ for the ring of $N$-by-$N$ matrices,
$\operatorname{GL}_N(B)$ for its group of invertible matrices, and
$I_N$ for the identity matrix. Polynomial and rational function rings
are denoted by $B[x_1,\ldots,x_t]$ and $K(x_1,\ldots,x_t)$.
\end{definition}

\begin{definition}[Arithmetic progressions and periodicity]\label{def:periodicity}
Arithmetic progressions include the singleton case $a+0\Nzero$.
A subset of $\Nzero$ is \emph{eventually periodic} if, beyond some
index, membership depends only on the residue class modulo a fixed
positive integer. An \emph{orbit tail} is obtained by deleting
finitely many initial iterates. A point or subvariety $Y$ is
\emph{periodic} if $f^n(Y)=Y$ for some $n\geq1$, and a subset is
\emph{invariant} if the map sends it into itself. We write
$f|_U$ for restriction and $f^{-j}(W)$ for the inverse image of $W$
under $f^j$, without assuming that $f$ is invertible.
\end{definition}

\subsection{The radial family and consequences}
\label{subsec:principal-results}

\begin{definition}[Radial skew products]\label{def:radial-skew-product}
Fix integers $m\geq0$ and $r\geq1$, and write
$u=(u_1,\ldots,u_m)$ and $\boldsymbol z=(z_1,\ldots,z_r)$ for the
base and fiber coordinates. Let $T:\A^m\to\A^m$ be affine-linear,
let $M(u)$ be a polynomially invertible $r$-by-$r$ matrix, and fix
$d\geq2$. Polynomial invertibility means that both $M(u)$ and
$M(u)^{-1}$ have polynomial entries, equivalently that $\det M(u)$ is
a nonzero constant. Assume that
$\rho(u,\boldsymbol z)\in(z_1,\ldots,z_r)^{d-1}$.
The \emph{fiber degree} of a monomial is its total degree in the
variables $z_1,\ldots,z_r$; thus the $q$th power of
$(z_1,\ldots,z_r)$ consists precisely of polynomials all of whose
monomials have fiber degree at least $q$. Define
\begin{equation}\label{eq:intro-map}
 F(u,\boldsymbol z)=
 \bigl(T(u),\rho(u,\boldsymbol z)M(u)\boldsymbol z\bigr).
\end{equation}
The map \eqref{eq:intro-map} is a \emph{skew product}: its base
coordinate depends only on $u$. Its \emph{zero section}
$\boldsymbol z=0$ is invariant, and its fiber map vanishes there to
order at least $d$. We call $\rho$ the \emph{radial factor}, $d$ the
lower bound for the \emph{radial order}, and
$\boldsymbol z\mapsto M(u)\boldsymbol z$ the underlying linear
cocycle. Because $\rho$ may depend on $\boldsymbol z$,
\eqref{eq:intro-map} need not be skew-linear.
\end{definition}

For an affine-linear map $T$, the descending chain
$T^n(\A^m)$ stabilizes because every strict inclusion lowers the
dimension. If $S=T^N(\A^m)=T^{N+1}(\A^m)$, then $T|_S$ is a
surjective affine-linear self-map of $S$ and hence an affine
automorphism.

\begin{definition}[Vertical critical locus and critical absorption]\label{def:critical-absorption}
On $S\times\A^r$, let $Z=S\times\{0\}$. The \emph{vertical critical
locus} $C$ is the zero set of the determinant of the Jacobian of the
fiber map with respect to $\boldsymbol z$, with $u$ fixed. The
condition $\bigl(F|_{S\times\A^r}\bigr)^\ell(C)\subseteq Z$
for some $\ell\geq1$ is called \emph{critical absorption}; the
inclusion is set-theoretic. For polynomials $h_1,\ldots,h_t$, write
$V(h_1,\ldots,h_t)$ for their common zero set, and write
$D_{\boldsymbol z}$ for the Jacobian with respect to the fiber
coordinates.
\end{definition}

We write $E_{\boldsymbol z}=\sum_i z_i\partial/\partial z_i$ for the
fiber Euler operator. The vertical critical determinant is
\begin{equation}\label{eq:general-critical-determinant}
 \det D_{\boldsymbol z}
 \bigl(\rho(u,\boldsymbol z)M(u)\boldsymbol z\bigr)
 =\det(M(u))\rho(u,\boldsymbol z)^{r-1}
   \bigl(\rho(u,\boldsymbol z)+E_{\boldsymbol z}\rho(u,\boldsymbol z)\bigr).
\end{equation}
Indeed,
\[
D_{\boldsymbol z}(\rho M\boldsymbol z)
=M\bigl(\rho I_r+\boldsymbol z(\nabla_{\boldsymbol z}\rho)^{\mathsf T}\bigr),
\]
and the rank-one determinant identity
\begin{equation}\label{eq:rank-one-determinant}
 \det(aI+vw^{\mathsf T})=a^{r-1}(a+w^{\mathsf T}v).
\end{equation}
Together with the preceding Jacobian identity, this proves
\eqref{eq:general-critical-determinant}.
Thus critical absorption concerns the zero set of the polynomial in
\eqref{eq:general-critical-determinant}, not a scheme-theoretic
containment. The specialization $r=1$, $M=1$, and
$\rho=z^{d-1}g$ gives the one-dimensional fiber map
$z\mapsto z^d g(u,z)$. In higher rank, both $M(u)$ and the common
factor may couple the fiber coordinates.

For a self-map of a smooth variety, \'{e}tale means that its
differential is everywhere invertible; over general base rings we use
the scheme-theoretic convention stated in
Section~\ref{subsec:geometric-inputs}. All polynomial degrees are total
degrees, and the degree of a polynomial map is the maximum of the
degrees of its nonzero coordinate polynomials.

\begin{definition}[Asymptotic notation]\label{def:asymptotic-growth}
For a positive sequence $b_n$, the relation $a_n=O(b_n)$ means that
$|a_n|\leq Cb_n$ for all sufficiently large $n$ and some constant
$C$, while $a_n=o(b_n)$ means that $a_n/b_n\to0$. Unless stated
otherwise, all asymptotic bounds are taken as $n\to\infty$ with the
remaining data fixed. This notation is distinct from the valuation
ring $O$ used below.
\end{definition}

In Corollary~\ref{cor:degree-gap}, we extend the return-set conclusion to
an \'{e}tale polynomial base $T$ under critical absorption and the
additional inequality $\deg(T^q)<d^q$ for some $q\geq1$. After passage
to the $q$th iterate, we use Lemma~\ref{lem:cocycle} to bound the
projective-direction presentation growth by a quantity of smaller
order than the radial scale $d^{qn}$. Example~\ref{ex:nonlinear-base}
provides an explicit nonlinear base with a nonconstant matrix cocycle.

If $P(u,\boldsymbol z)$ is a nonzero polynomial homogeneous of fiber
degree $d-1$, we combine Euler's identity
$E_{\boldsymbol z}P=(d-1)P$ and
\eqref{eq:rank-one-determinant} to obtain
\begin{equation}\label{eq:intro-determinant}
 \det D_{\boldsymbol z}
 \bigl(P(u,\boldsymbol z)M(u)\boldsymbol z\bigr)
 =d\det(M(u))P(u,\boldsymbol z)^r.
\end{equation}
Consequently, the critical locus is $V(P)$ and its image is contained
in $Z$. Corollary~\ref{cor:homogeneous} therefore requires no separate
critical-absorption hypothesis. For a nonlinear \'{e}tale base, the
degree-gap condition remains in force. Example~\ref{ex:nonhomogeneous-radial}
shows that critical absorption may also hold for a nonhomogeneous
factor and may require more than one iterate.

In Theorem~\ref{thm:uniform-sml}, we prove a uniform
Skolem--Mahler--Lech consequence for a fixed nonlinear orbit and a
polynomial-coefficient linear recurrence of the form
\eqref{eq:scalar-recurrence}, with nonzero constant coefficient on its
earliest term. A single eventual period applies to all permitted
polynomial and rational relations, to every finite number of
successive terms, and to all solutions of the fixed recurrence; the
finite exceptional set may depend on the relation. Rational
expressions are considered only at indices where their denominators do
not vanish. The common-period argument uses the sequence-ring method
of Wibmer \cite[proof of Theorem~3.1]{Wibmer2015} and
\cite[Appendix, proof of Theorem~4.2]{GhiocaXieSkewLinear}.

\subsection{Arithmetic origins and geometric consequences}
\label{subsec:significance}

The classical prototypes for our setting are the Skolem--Mahler--Lech theorem
\cite{Lech1953} and the Mordell--Lang theorem for semiabelian varieties
\cite{Faltings1994,Vojta1996}. Consecutive terms of a
constant-coefficient linear recurrence evolve under a companion
linear map, so the zero set of the recurrence is the return set to a
coordinate hyperplane \cite[Section~2.2]{XieAroundDML}. Similarly,
nonnegative multiples of a point form a translation orbit, and
intersections with a cyclic subgroup yield arithmetic progressions
\cite[Section~1]{Bell2010}. Dynamical Mordell--Lang extends this
one-parameter intersection principle beyond linear recurrences and
group varieties. Denis sought arithmetic progressions among recurring
intersections \cite{Denis1994}; the full conjecture requires such
progressions to account for every return apart from finitely many
exceptions.

In characteristic zero, the geometric form of the conjecture predicts
that an irreducible positive-dimensional subvariety meeting a forward
orbit in a Zariski-dense set is periodic
\cite[Conjecture~1.2]{XieAroundDML}. Exact return-set theorems therefore
convert repeated incidences into invariant algebraic structure. Bell,
Ghioca, and Tucker proved that an infinite Zariski-dense orbit of an
automorphism of an irreducible quasi-projective variety in
characteristic zero is \emph{critically dense}: every infinite subset
is Zariski dense \cite[Section~5]{Bell2010}. Keeler, Rogalski, and
Stafford used critical density to prove the Noetherian property of
their noncommutative blowup algebras under the remaining hypotheses of
their construction
\cite[Theorem~1.1]{KeelerRogalskiStafford2005}. These applications,
together with the difference-equation consequence in
Theorem~\ref{thm:uniform-sml}, require a finite exceptional set rather
than a mere sparsity estimate.

\subsection{Known cases and recent methods}
\label{subsec:related-work}

Bell, Ghioca, and Tucker proved Dynamical Mordell--Lang for every point
and every closed target under an \'{e}tale endomorphism of a
quasi-projective variety over $\mathbb C$
\cite[Theorem~1.3]{Bell2010}, extending Bell's theorem for affine
variety automorphisms \cite{Bell2006Affine,Bell2008Corrigendum}. Their
argument used $p$-adic interpolation of orbit subsequences and the
finiteness of analytic zero sets \cite[Section~1]{Bell2010}. This
result, recalled as Theorem~\ref{thm:etale-dml}, applies to the present
dynamics on the zero section but not to the ramified fiber dynamics.

Xie proved the conjecture for all polynomial endomorphisms of
$\A^2_{\overline{\mathbb Q}}$ \cite{Xie17} and subsequently extended
the result to $\mathbb C$ \cite[Theorem~3.2]{XieAroundDML}. The
extension used Moriwaki's height theory over finitely generated fields
\cite{Moriwaki2000} and the finitely generated-ring form of Siegel's
theorem \cite[Section~3.1]{XieAroundDML}. We use these results as points
of comparison rather than as inputs to the proofs below. In particular,
the return-set conclusion of the present paper was already known in
total dimension two, whereas the family considered here permits
arbitrary base dimension and fiber rank.

Over an algebraically closed field $k$ of characteristic zero,
Ghioca--Xie considered skew-linear maps
$(x,y)\mapsto\bigl(g(x),A(x)y\bigr)$ on $\A^1\times\A^N$. When $g$
was an automorphism and $A(x)\in M_N(k[x])$, they proved that an
irreducible curve containing infinitely many distinct points of an
orbit was periodic \cite[Theorem~1.1]{GhiocaXieSkewLinear}. They also
treated rational maps on $\Pj^1\times\A^N$ with
$g:\Pj^1\to\Pj^1$ of degree greater than one and
$A(x)\in M_N(k(x))$: every irreducible positive-dimensional
subvariety whose intersection with a well-defined forward orbit was
Zariski dense was periodic
\cite[Theorem~1.2]{GhiocaXieSkewLinear}. For this rational-map result,
images of subvarieties were taken to be the Zariski closures of the
images on the domain of definition
\cite[Section~1.1]{GhiocaXieSkewLinear}. These theorems already allowed
arbitrary fiber dimension. The distinction in
\eqref{eq:intro-map} is the nonlinear scalar factor depending on the
fiber variables.

Without restrictions on ramification, Bell, Ghioca, and Tucker
obtained a weak theorem for rational self-maps of quasi-projective
varieties over an arbitrary field and for well-defined forward orbits:
a return set was a finite union of arithmetic progressions together
with a set of Banach density zero
\cite[Corollary~1.5]{BGTNoetherian2015}. The density-zero set could
still be infinite. Xie's uniform weak theorem made the sparsity bound
independent of the initial point among points with Zariski-dense
orbits, for a fixed map and a proper closed target
\cite[Theorem~5.2]{XieAroundDML}. Uniform weak estimates for finite
orbit segments also yielded upper bounds for arithmetic height growth
in terms of degree growth \cite[Theorem~6.1]{XieAroundDML}. By
contrast, we exclude an infinite exceptional set for the stated
ramified family.

In 2026, Xie, Yang, and Zheng proved the full conjecture for split
products $f\times g$ on $X\times Y$, where $X$ was an affine curve and
$Y$ a projective curve over $\overline{\mathbb Q}$
\cite[Theorem~1.1]{XieYangZheng2026SplitCurves}. Other coordinatewise
and split cases were treated in \cite{GTZ08,BGKT12}, while
Medvedev--Scanlon developed the classification of invariant varieties
for such polynomial systems \cite{MedvedevScanlon2014}.

In a 2026 preprint, Yang and Zheng treated regular polynomial
endomorphisms $f$ of $\A^N_{\mathbb C}$, namely those extending to
endomorphisms of $\Pj^N$. They wrote $d=\deg f$, imposed an integer
degree-gap parameter $k$ with $1\leq k\leq d$, and assumed that the
lower homogeneous terms had degree at most $d-k$. If $f_\infty$
denoted the induced map on the hyperplane at infinity, their
multiplicity condition was
\begin{equation}\label{eq:yang-zheng-condition}
 k>2\max_{P\text{ periodic for }f_\infty}e_{f_\infty}(P),
\end{equation}
as stated in \cite[Condition~1.5]{YangZheng2026LocalHeight}. Here
$e_{f_\infty}(P)$ was the local multiplicity, equivalently the length
of the local ring of the scheme-theoretic fiber at $P$
\cite[Section~1.1]{YangZheng2026LocalHeight}. Under
\eqref{eq:yang-zheng-condition}, they proved Dynamical Mordell--Lang for
curve targets and showed that every irreducible $f$-periodic curve was
a line through the origin, called a \emph{vertical line} in their
terminology
\cite[Setting~1.2 and Theorem~1.7]{YangZheng2026LocalHeight}. Their
degree-gap condition differed from the inequality $\deg(T^q)<d^q$ in
Corollary~\ref{cor:degree-gap}.

Growth separation also appeared in the height-theoretic transfer
theorem of Xie--Yang. They proved that, for a surjective endomorphism
of a projective variety and a surjective endomorphism of a projective
curve in characteristic zero, the return sets for the product had the
same form as those for the first factor when the degree of the curve map
exceeded the first dynamical degree of the endomorphism of the
projective variety
\cite[Theorem~1.8]{XieYangHeight2025}. In that theorem, the first
dynamical degree measured the exponential growth of divisor pullbacks.
Their theorem also treated specified classes of return sets in positive
characteristic, without asserting the characteristic-zero conclusion
there. Related comparisons of growth and rates of approach appeared in
\cite[Section~1]{XieYangZheng2026SplitCurves} and
\cite[Section~1.2]{YangZheng2026LocalHeight}. Our comparison is instead
between projective-direction presentation complexity and radial
contraction.

\subsection{Method and structure of the proof}

We begin the proof by blowing up $S\times\A^r$ along $Z$. By
Proposition~\ref{prop:bundle-blowup-input} and
\eqref{eq:blowup-bundle}, we identify this blow-up with the total space of the
tautological line bundle over $S\times\Pj^{r-1}$. The common radial
factor makes the lifted map an endomorphism of this line bundle over
the map $(u,[\boldsymbol z])\longmapsto
\bigl(T(u),[M(u)\boldsymbol z]\bigr)$.
After restriction to the stabilized affine image $S$, an affine-linear
base induces an automorphism whose orbits have the polynomial
presentation-growth bound of Lemma~\ref{lem:cocycle}. In a local radial
coordinate, the lifted map is the $d$th power of that coordinate times
a regular function. We use Proposition~\ref{prop:blowup} to transfer
critical absorption to this line-bundle system.

For the arithmetic input, we use Proposition~\ref{prop:controlled-place}.
For a finitely generated characteristic-zero field, it produces a complete
rank-one valued realization with finite residue field and the estimate
\eqref{eq:controlled-valuation}, which bounds the valuation of a
nonzero algebraic expression by its presentation complexity. This
replaces the number-field product-formula argument, which does not
control the valuations of varying expressions after an arbitrary
embedding of a field with transcendental elements.

In Theorem~\ref{thm:line-bundle}, we apply this estimate along the
selected base orbit. In a contracting residue tube, the radial valuation grows
at least as a positive constant times $d^n$, whereas the valuations of
nonzero coefficients in the target equations are $o(d^n)$. Hence, for
all sufficiently large indices on each analytic orbit subsequence, the
nonzero term of least radial degree has strictly smaller valuation than
every higher-degree term and cannot be canceled. We combine this
separation with $p$-adic interpolation and Strassmann's theorem to
obtain the exact return-set conclusion, and then use equivariance of
the blow-down to transfer the conclusion to the original skew product.

In Section~\ref{sec:standard-inputs}, we record the analytic and
algebraic-geometric inputs. We construct the controlled nonarchimedean
place in Section~\ref{sec:places}, and establish the analytic coordinates,
presentation growth, and line-bundle transfer theorem in
Sections~\ref{sec:local} and~\ref{sec:linebundle}. We prove the principal
results in Section~\ref{sec:higherrank}, give the Skolem--Mahler--Lech
application in Section~\ref{sec:sml}, and collect explicit examples in
Section~\ref{sec:examples}.

Unless otherwise stated, $k$ is an algebraically closed field of
characteristic zero. We may work over $\mathbb C$: every fixed
orbit problem and the finite polynomial identities expressing its
hypotheses descend to a finitely generated characteristic-zero
subfield, which embeds in $\mathbb C$. Extension of the ground field
preserves the vanishing of the target equations. In particular,
critical absorption is represented by the radical-membership
identities constructed in Lemma~\ref{lem:good-model}, and these
identities persist under field extension.

\section{Preliminaries}
\label{sec:standard-inputs}

In this section, we fix the nonarchimedean conventions and record the
analytic and algebraic-geometric inputs used below. The valuation rings
are not assumed Noetherian.

\subsection{Analytic theorems}
\label{subsec:analytic-inputs}

\begin{definition}[Valuations]\label{def:valuations}
An additive real-valued valuation on a field $\cK$ is a map
$v:\cK\to\mathbb R\cup\{+\infty\}$ such that $v(a)=+\infty$ exactly
for $a=0$, $v(ab)=v(a)+v(b)$, and
$v(a+b)\geq\min\{v(a),v(b)\}$. Its value group is
$v(\cK^\times)$. A rank-one valuation is nontrivial and has value group
contained in $\mathbb R$; it is not assumed discrete. A valuation is
trivial if it vanishes on $\cK^\times$. When $p$ is prime and $v(p)=1$,
we use the associated absolute value $|a|=p^{-v(a)}$, with $|0|=0$.
Completeness is taken with respect to this absolute value.
\end{definition}

\begin{definition}[Valuation rings and reduction]\label{def:valuation-rings}
For a complete nonarchimedean field $\cK$, write
\[
 O=\cK^\circ=\{a:|a|\leq1\},\qquad
 \mathfrak m=\cK^{\circ\circ}=\{a:|a|<1\},\qquad
 \kappa=O/\mathfrak m.
\]
Thus $O$ is the valuation ring, $\mathfrak m$ its maximal ideal, and
$\kappa$ its residue field. An element of $\cK$ is called integral if
it belongs to $O$, and an element of $O$ is a unit precisely when its
absolute value is one. For a valued field $E$, we also write $\cO_E$
for its valuation ring and $\mathfrak m_{E^\circ}=E^{\circ\circ}$ for
its maximal ideal. The $p$-adic absolute value on $\mathbb Q_p$ is
normalized by $|p|=p^{-1}$, and $\mathbb F_p$ denotes the field with
$p$ elements.
\end{definition}

\begin{definition}[Cartesian powers and polydiscs]\label{def:polydiscs}
For a set $B$, the notation $B^s$ denotes its $s$-fold Cartesian
power, with $B^0$ the singleton consisting of the empty tuple. In
particular, $\mathfrak m^s$ is a Cartesian power, not an ideal power.
On $\cK^s$ we use the maximum norm
$\|(x_1,\ldots,x_s)\|=\max_i|x_i|$, with norm zero on the empty
tuple. Then $O^s$ and $\mathfrak m^s$ are
the closed and open unit polydiscs, respectively, and
$\lambda O^s=\{x:|x_i|\leq|\lambda|\text{ for all }i\}$ for
$\lambda\in\cK^\times$. The notation $M_s(\mathfrak m)$ denotes the
$s$-by-$s$ matrices with entries in $\mathfrak m$.
\end{definition}

\begin{definition}[Formal and restricted power series]\label{def:series-algebras}
For $\boldsymbol X=(X_1,\ldots,X_s)$ and
$I=(i_1,\ldots,i_s)\in\Nzero^s$, put
$X^I=X_1^{i_1}\cdots X_s^{i_s}$ and $|I|=i_1+\cdots+i_s$.
The Tate algebra $\cK\langle\boldsymbol X\rangle$ consists of the
series $\sum_I a_I X^I$ for which $|a_I|\to0$ as $|I|\to\infty$; its
Gauss norm is $\max_I|a_I|$. The subring
$O\langle\boldsymbol X\rangle$ consists of such series with all
coefficients in $O$, and every element of this subring converges on
$O^s$. A superscript on $O\langle\boldsymbol X\rangle^s$ denotes an
$s$-tuple of series.

The formal power-series ring $O[[\boldsymbol X]]$ imposes no decay
condition on its coefficients, and
$(\boldsymbol X)O[[\boldsymbol X]]$ is its ideal of series with zero
constant term. The same notation is used for other specified variable
tuples, including $W$ and $Z$.
\end{definition}

\begin{definition}[Reduction of series and coefficient congruences]\label{def:coefficient-congruences}
Coefficientwise reduction maps $O\langle\boldsymbol X\rangle$ to a
polynomial ring over $\kappa$; a bar denotes this reduction. An
integral series map \emph{reduces to the identity} if its reduction is
the identity polynomial map. For real $c>0$,
$p^c O\langle\boldsymbol X\rangle$ denotes the set of series whose
coefficients all have valuation at least $c$; no element $p^c\in\cK$
is required. Membership in \eqref{eq:poonen-congruence} is understood
in this coefficientwise sense.
\end{definition}

\begin{theorem}[\'{E}tale Dynamical Mordell--Lang]
\label{thm:etale-dml}
Let $X$ be a quasi-projective variety over $\mathbb C$, let $f:X\to X$ be an
\'{e}tale endomorphism, let $x\in X(\mathbb C)$, and let $V\subseteq X$ be
closed.  Then $S_f(x,V)$ is a finite union of arithmetic progressions.
\end{theorem}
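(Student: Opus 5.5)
This is the theorem of Bell--Ghioca--Tucker \cite[Theorem~1.3]{Bell2010}, and the paper uses it only as a cited input. The plan is therefore to reproduce their $p$-adic interpolation argument in outline rather than to derive it from the present setup.

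\emph{Step 1: arithmetic model.} First we descend all the data. The variety $X$, the map $f$, the point $x$ and the target $V$ are defined over a finitely generated $\mathbb Z$-algebra $R\subset\mathbb C$. Over $R$ we choose a quasi-projective model $\mathcal X\to\operatorname{Spec}R$ on which $f$ extends to an \'{e}tale endomorphism and $x$ extends to a section. After inverting finitely many elements of $R$, we can further arrange that $\mathcal X$ is smooth wherever the orbit lands, or we stratify so as to reduce to the smooth case. By the standard embedding lemma (Lech--Cassels type) there are a prime $p$ and an embedding $R\hookrightarrow\mathbb Z_p$ with all these properties surviving. Reducing modulo $p$ gives a finite set $\mathcal X(\mathbb F_p)$ on which $\bar f$ acts. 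Hence the reduced orbit is eventually periodic, and after replacing $x$ by $f^{a}(x)$ and $f$ by $f^{b}$ we may assume $\bar x$ is $\bar f$-fixed. Splitting $\Nzero$ into residue classes modulo $b$ then reduces the theorem to this fixed case.

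\emph{Step 2: local analytic coordinates.} Since $\mathcal X$ is smooth at $\bar x$, the residue disc of $\bar x$ is identified with $p\mathbb Z_p^s$, where $s=\dim X$. Because $f$ is \'{e}tale and fixes that residue disc, it is given there by an integral power series map whose Jacobian is invertible modulo $p$. Replacing $f$ by a further iterate, we may assume this Jacobian reduces to the identity and that $f$ satisfies the congruence $f(\boldsymbol X)\equiv\boldsymbol X \pmod p$ coefficientwise. This is the form required by Poonen's interpolation lemma, which appears later as \eqref{eq:poonen-congruence}.

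\emph{Step 3: interpolation and Strassmann.} Poonen's lemma (or Bell's original argument) then gives a $p$-adic analytic map $G:\mathbb Z_p\to\mathbb Z_p^s$, with coefficients in $\mathbb Z_p\langle n\rangle$, such that $G(n)=f^n(x)$ for every $n\in\Nzero$. For each equation $h$ cutting out $V$ locally, the composite $h\circ G$ is a restricted power series in $n$. By Strassmann's theorem it is either identically zero or has only finitely many zeros. The return set is the intersection over the equations $h$. Each of these zero sets is either all of $\Nzero$ or finite, so their intersection is again all of $\Nzero$ or finite. Undoing the residue-class splitting from Step 1 gives a finite union of arithmetic progressions.

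\emph{Main obstacle.} I expect Step 1 to be the main difficulty. It requires an embedding into $\mathbb Z_p$ that preserves both \'{e}taleness and smoothness of the model along the orbit. When $X$ is singular, the orbit may pass through singular points, and then $p$-adic coordinates are not available directly. I would handle this by noetherian induction on the singular locus. Because $f$ is \'{e}tale, it maps singular points to singular points and smooth points to smooth points, so it preserves the singular stratification. An orbit therefore stays in a single stratum, and we can apply the smooth argument within that stratum.
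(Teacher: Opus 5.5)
Your outline is correct and is essentially the Bell--Ghioca--Tucker $p$-adic argument that the paper simply cites (and whose residue-tube/Poonen/Strassmann core the paper itself re-runs in Lemma~\ref{lem:local-interpolation}); your reduction of the singular case to invariant smooth strata also works, because $f^{-1}(X_{\mathrm{sing}})=X_{\mathrm{sing}}$ and the restriction of $f$ to $X_{\mathrm{sing}}$ with its reduced structure is again \'{e}tale. Two details should be made explicit: first, the congruence $\equiv\mathrm{id}\pmod p$ holds only in the rescaled coordinates $W=pZ$ on $\mathbb Z_p^s$, where the reduction is affine-linear and an iterate must kill both its linear part and its translation part; second, you must take $p\geq3$ so that Poonen's condition $c=1>1/(p-1)$ is satisfied.
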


This is the theorem of Bell--Ghioca--Tucker
\cite[Theorem~1.3]{Bell2010}, with the return-set notation of
Definition~\ref{def:return-set}. The ground-field reduction in the
introduction gives the same conclusion over $k$. We apply it to orbit
tails contained in the zero section.

\begin{theorem}[Poonen's interpolation theorem]
\label{thm:poonen}
Let $p$ be a prime, let $(\cK,v)$ be a complete nonarchimedean valued field
with $v(p)=1$, and write $\cK^\circ$ for its valuation ring.  Let
$G=(G_1,\ldots,G_s)\in
\cK^\circ\langle X_1,\ldots,X_s\rangle^s$.  Suppose that
\begin{equation}\label{eq:poonen-congruence}
 G_i(\boldsymbol X)-X_i\in p^c
 \cK^\circ\langle\boldsymbol X\rangle
 \quad(1\leq i\leq s)
 \qquad\text{for some }c>\frac1{p-1},
\end{equation}
where membership in \eqref{eq:poonen-congruence} means that every
coefficient has valuation at least $c$, as in
Definition~\ref{def:coefficient-congruences}.  Then there is
$\mathcal G(\boldsymbol X,n) \in\cK^\circ\langle X_1,\ldots,X_s,n\rangle^s$
such that $\mathcal G(\boldsymbol X,n)=G^n(\boldsymbol X)$ for every
$n\in\Nzero$.
\end{theorem}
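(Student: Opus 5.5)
The plan is to follow Poonen's approach: Mahler-type interpolation of the iterates $G^n$ by binomial expansion in $n$, with the congruence \eqref{eq:poonen-congruence} ensuring convergence. Write $G=\mathrm{id}+H$ with $H=(H_1,\ldots,H_s)$ and every coefficient of each $H_i$ of valuation at least $c$. Consider the difference operator on the algebra $A=\cK^\circ\langle\boldsymbol X\rangle$ given by
\[
\Delta(\phi)=\phi\circ G-\phi ,
\]
which is well defined because $G$ maps $(\cK^\circ)^s$ into itself and composition preserves $A$. For every $n\in\Nzero$, the identity $\phi\circ G^n=(1+\Delta)^n\phi$ gives
\[
X_i\circ G^n=\sum_{j\geq0}\binom nj\Delta^j(X_i).
\]
The goal is to show that $\Delta^j(X_i)\in p^{cj}A$, possibly with a slightly weaker exponent, and that this decay beats the growth of the coefficients of the polynomials $\binom nj$ in $n$.

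The first key step is the estimate on $\Delta$. For $\phi\in A$, Taylor expansion gives
\[
\phi(\boldsymbol X+H)-\phi(\boldsymbol X)=\sum_{|I|\geq1}\partial^{(I)}\phi(\boldsymbol X)\,H^I ,
\]
where $\partial^{(I)}$ are the divided-power derivatives, which preserve $A$. Hence $\Delta$ maps $p^aA$ into $p^{a+c}A$, and so $\Delta^j(X_i)\in p^{cj}A$. The expansion converges in the Gauss norm because the coefficients of $\phi$ tend to zero, so this step is routine.

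The second key step handles the binomial coefficients. Write $\binom nj=\frac1{j!}n(n-1)\cdots(n-j+1)$. This is a polynomial in $n$ with coefficients in $\frac1{j!}\mathbb Z$, so its Gauss norm in $\cK\langle n\rangle$ is at most $|j!|^{-1}=p^{v(j!)}$, and $v(j!)\leq j/(p-1)$ by Legendre's formula. The $j$th term $\binom nj\Delta^j(X_i)$ therefore lies in $p^{j(c-1/(p-1))}\cK^\circ\langle\boldsymbol X,n\rangle$. Since $c>1/(p-1)$, these terms tend to zero and all lie in $\cK^\circ\langle\boldsymbol X,n\rangle$. Because that ring is complete, the series converges to some $\mathcal G_i(\boldsymbol X,n)$ there. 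Evaluating at $n\in\Nzero$ kills every term with $j>n$ and recovers $X_i\circ G^n=G^n_i$. A minor technical point is that $p^c$ is not required to be an element of $\cK$; all estimates are stated coefficientwise through valuations, as in Definition~\ref{def:coefficient-congruences}, so this causes no difficulty.

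The main obstacle is the delicate bound: checking that the Gauss norm of $\binom nj$ as a polynomial in $n$ is controlled by $1/|j!|$ and not by something worse. This holds because the numerator $n(n-1)\cdots(n-j+1)$ has integer coefficients. It is this bound that fixes the threshold $c>1/(p-1)$ exactly, with no loss. The remaining care is bookkeeping: substituting $n\in\Nzero$, which lies in $\cK^\circ$, into a Tate series commutes with the convergent sum.
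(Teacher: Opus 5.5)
Your proposal is correct. The paper does not reprove this statement; it quotes it from \cite[Theorem~1]{PoonenInterpolation}, and your argument is essentially Poonen's own proof: the difference operator $\Delta(\phi)=\phi\circ G-\phi$ on $\cK^\circ\langle\boldsymbol X\rangle$, the estimate that $\Delta$ raises coefficient valuations by at least $c$, the expansion $G^n=\sum_j\binom nj\Delta^j(\boldsymbol X)$, and the bound $v(j!)\le j/(p-1)$. Your divided-power Taylor expansion is a clean coefficientwise way to get the $\Delta$-estimate, and it covers the case where $p^c$ is not an element of $\cK$, consistent with Definition~\ref{def:coefficient-congruences}.
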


We use Poonen's theorem \cite[Theorem~1]{PoonenInterpolation}
to express the iterates as the values of a restricted series in one
additional variable. In $\mathcal G(\boldsymbol X,n)$, the symbol $n$
is a series variable before specialization to a nonnegative integer.

\begin{remark}[Poonen's iteration criterion]\label{rem:poonen-iteration}
For the complete valued field and integral restricted series map of
Theorem~\ref{thm:poonen}, we also use Poonen's iteration criterion
\cite[Remark~4]{PoonenInterpolation}: if $G$ reduces to the identity
modulo the maximal ideal, then a suitable $p$-power iterate satisfies
the congruence \eqref{eq:poonen-congruence}. In
Lemma~\ref{lem:local-interpolation}, the initial reduction is an
affine-linear automorphism over the finite residue field. Its finite
order gives an iterate whose reduction is the identity, after which
Poonen's criterion applies. Neither discreteness of the valuation nor
local compactness is required.
\end{remark}

\begin{theorem}[Strassmann]
\label{thm:strassmann}
Let $\cK$ be a complete nonarchimedean field and let
$h(N)=\sum_{j\geq0} c_j N^j\in\cK\langle N\rangle$ be nonzero.  Then $h$ has
only finitely many zeros in $\cK^\circ$.
\end{theorem}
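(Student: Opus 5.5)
The plan is the Weierstrass-preparation argument. The theorem is stated as an input, and the standard proof is short.

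Let $h(N)=\sum_j c_jN^j$ with $|c_j|\to0$, and assume $h\neq0$. Then $M=\max_j|c_j|>0$ is attained. Let $J$ be the largest index with $|c_J|=M$. Such an index exists because $|c_j|\to0$, so only finitely many coefficients reach the maximum. After dividing by $c_J$, the series $h$ lies in $\cK^\circ\langle N\rangle$, has Gauss norm one, and its reduction $\bar h\in\kappa[N]$ is a nonzero polynomial of degree exactly $J$.

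The core of the proof is to show that $h$ has at most $J$ zeros in $\cK^\circ$. This does not require Weierstrass preparation in full; induction on $J$ suffices.

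\emph{Base case $J=0$.} For $x\in\cK^\circ$, the term $c_0$ has absolute value $1$ and every other term $c_jx^j$ has absolute value less than $1$. Hence $|h(x)|=1$, and $h$ has no zeros.

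\emph{Inductive step $J>0$.} Suppose $h(a)=0$ for some $a\in\cK^\circ$. Write
\[
h(N)=h(N)-h(a)=\sum_j c_j(N^j-a^j)=(N-a)\,g(N),
\]
where
\[
g(N)=\sum_{i\ge0}\Bigl(\sum_{j>i}c_ja^{j-1-i}\Bigr)N^i.
\]
The coefficients of $g$ lie in $\cK^\circ$ because $|a|\le1$. They tend to $0$: for $j>i$ we have $|c_ja^{j-1-i}|\le|c_j|$, so the $i$th coefficient is bounded by $\sup_{j>i}|c_j|$. Thus $g\in\cK^\circ\langle N\rangle$.

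Reducing the factorization gives $\bar h=(N-\bar a)\bar g$ in $\kappa[N]$. Hence $\bar g$ is a nonzero polynomial of degree $J-1$. This means $g$ has Gauss norm $1$ and the largest index of a unit coefficient of $g$ is $J-1$.

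Every zero $b\neq a$ of $h$ in $\cK^\circ$ is a zero of $g$, because $\cK$ is a field and $b-a\neq0$. By induction $g$ has at most $J-1$ zeros, so $h$ has at most $J$ zeros.

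Two details need care. The first is convergence and the rearrangement behind $h(N)=(N-a)g(N)$. This identity should be verified coefficientwise: the coefficient of $N^i$ in $(N-a)g$ is
\[
\sum_{j>i-1}c_ja^{j-i}-\sum_{j>i}c_ja^{j-i}.
\]
For $i\ge1$ this equals $c_i$. For $i=0$ it equals $-\sum_{j\ge1}c_ja^j=c_0-h(a)=c_0$. All series involved converge absolutely in the nonarchimedean sense.

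The second detail is the degree claim for $\bar g$. Since $\kappa[N]$ is a domain and $\bar h$ has degree $J$, the factor $\bar g$ is nonzero of degree $J-1$. This determines both the Gauss norm of $g$ and the index of its last unit coefficient.

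The main obstacle is essentially bookkeeping: checking the membership $g\in\cK^\circ\langle N\rangle$ and the degree drop. Completeness of $\cK$ is used only to make sense of the evaluations $h(x)$. No discreteness of the valuation is needed.
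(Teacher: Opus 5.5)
Your proof is correct, and it takes a different route from the paper. The paper does not argue directly. For a nontrivial valuation it cites one-variable Weierstrass preparation \cite[Section~2.2, Corollary~9]{Bosch2014} to write $h$ as a unit of $\cK\langle N\rangle$ times a polynomial, and notes that the unit has no zeros. It handles the trivial valuation separately, by observing that restricted series are then polynomials.

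You instead induct on the Weierstrass degree $J=\deg\bar h$ (after normalizing to Gauss norm one). You divide out one factor $N-a$ at a time and read off $\deg\bar g=J-1$ from $\bar h=(N-\bar a)\bar g$ in the domain $\kappa[N]$. This is the classical Strassmann argument, essentially an unpacked version of the factorization that preparation provides.

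What each route buys:
\begin{itemize}
\item Your route is self-contained and gives the explicit bound of at most $J$ zeros. It needs no case split: for the trivial valuation, $\mathfrak m=0$, reduction is the identity, and your induction is just the factor theorem for polynomials.
\item The paper's route is shorter and yields the structural factorization $h=u\cdot\omega$. The price is an external theorem, which the paper applies only in the nontrivially valued case.
\end{itemize}

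Two small points of precision:
\begin{itemize}
\item Completeness is needed not only for the evaluations $h(x)$ but also to define the coefficients $\sum_{j>i}c_ja^{j-1-i}$ of $g$.
\item In the base case, the strict bound $\bigl|\sum_{j\geq1}c_jx^j\bigr|<1$ comes from $\sup_{j\geq1}|c_j|$ being attained (because $|c_j|\to0$). It does not follow merely from each term being smaller than $1$.
\end{itemize}
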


For a nontrivial valuation, one-variable Weierstrass preparation
writes a nonzero restricted series as a unit in the Tate algebra times
a polynomial \cite[Section~2.2, Corollary~9]{Bosch2014}. The unit has
no zeros, so the zero set is the finite zero set of the polynomial; no
discreteness assumption is involved. For a trivial valuation,
restricted series are themselves polynomials. Consequently, in
characteristic zero, a restricted series with infinitely many
nonnegative-integer zeros is identically zero.

\subsection{Algebraic-geometric inputs}
\label{subsec:geometric-inputs}

All rings are commutative with identity. We use
$\operatorname{Spec}B$, $\cO_U$, $\Gamma(U,\cO_U)$,
$\operatorname{Frac}(B)$, the localization $B_g$, its principal open
set $D(g)$, and the radical $\sqrt J$ in their scheme-theoretic senses.
For an affine variety $S$ over $k$, its coordinate ring is denoted by
$k[S]$. An \emph{integral} algebra means a domain, rather than an
algebra integral over the base ring, and closed subschemes retain their
defining ideals rather than only their underlying sets.

For a $B$-scheme $\mathcal X$ and a homomorphism $B\to B'$, write
$\mathcal X_{B'}=\mathcal X\times_{\operatorname{Spec}B}
\operatorname{Spec}B'$. A $B$-point is a section
$\operatorname{Spec}B\to\mathcal X$, and a geometric subscript $s$
denotes base change to the algebraically closed residue field at $s$.
The terms finite type, finite, finite projective, flat, and faithfully
flat have their usual algebraic meanings. We use \emph{\'{e}tale} to
mean smooth of relative dimension zero. Generic \'{e}taleness means
\'{e}taleness after restriction to a nonempty open subset of the
target.

\begin{proposition}[Noether normalization and generic \'{e}taleness]\label{prop:normalization-input}
Noether normalization gives a finite surjective morphism from a nonempty
integral affine variety to an affine space of the same dimension
\cite[Chapter~5, Exercise~16]{AtiyahMacdonald1969}. A finite dominant
morphism between integral varieties in characteristic zero becomes
finite \'{e}tale over a nonempty open subset of the target
\cite[17.3.7 and Theorem~17.6.1]{EGAIV4}.
\end{proposition}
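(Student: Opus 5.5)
The statement combines two standard facts, and I would prove them separately by reducing each to commutative algebra on coordinate rings. For the first, let $X=\operatorname{Spec}B$ be a nonempty integral affine variety over $k$, so that $B$ is a finitely generated $k$-domain. I would prove Noether normalization in the usual way, by induction on the number of generators $x_1,\ldots,x_t$ of $B$.
\begin{itemize}
\item If the generators are algebraically independent, there is nothing to prove.
\item Otherwise, pick a nonzero polynomial relation $P(x_1,\ldots,x_t)=0$.
\item Change variables by $x_i'=x_i-x_t^{e^{i}}$ for $i<t$, with $e$ larger than every exponent in $P$ (Nagata's trick). Alternatively, since $k$ is infinite, use a generic linear change of variables.
\item After this change, $P$ becomes monic in $x_t$ up to a nonzero scalar. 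Hence $x_t$ is integral over $k[x_1',\ldots,x_{t-1}']$.
\item Induction then yields algebraically independent $y_1,\ldots,y_n$ with $B$ finite over $A=k[y_1,\ldots,y_n]$.
\end{itemize}
Because $B$ is integral over $A$, the fields $\operatorname{Frac}B$ and $\operatorname{Frac}A$ have the same transcendence degree, so $n=\dim X$. The morphism $X\to\A^n$ is finite. It is surjective by the lying-over theorem for the injective integral extension $A\subseteq B$.

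For the second fact, let $f:X\to Y$ be finite and dominant between integral varieties. Replacing $Y$ by an affine open $\operatorname{Spec}A$ and $X$ by its preimage $\operatorname{Spec}B$, which is affine because $f$ is finite, I may assume $B$ is a finite $A$-module. Dominance with both rings domains makes $A\to B$ injective. The field extension $L=\operatorname{Frac}B$ over $K=\operatorname{Frac}A$ is then finite, and it is separable because the characteristic is zero. I would proceed as follows.
\begin{itemize}
\item Choose a $K$-basis $b_1,\ldots,b_e$ of $L$ lying in $B$.
\item The finitely many module generators of $B$ are $K$-combinations of the $b_j$. Inverting a common denominator $g\in A$ gives $B_g=\bigoplus A_g b_j$, so $B_g$ is finite free over $A_g$.
\item Let $\delta=\det(\operatorname{Tr}_{L/K}(b_ib_j))$. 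The traces lie in $A_g$ because $B_g$ is free over $A_g$. By separability, $\delta$ is nonzero in $K$.
\item Over $A_{g\delta}$, the algebra $B_{g\delta}$ is finite free with unit discriminant, and this is the standard criterion for finite étaleness. Equivalently, every fiber is a finite product of separable field extensions, and together with flatness this gives unramified and flat, hence smooth of relative dimension zero.
\end{itemize}
The open set $D(g\delta)$ is nonempty, which gives the claim.

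The only step needing care is the discriminant criterion for étaleness. I would prove it fiberwise: at a point $\mathfrak p$, the trace form of $B\otimes\kappa(\mathfrak p)$ is nondegenerate. A nilpotent element lies in the kernel of the trace pairing, since $xy$ is nilpotent and so has trace zero. Hence the fiber algebra is reduced. A reduced finite algebra over a field of characteristic zero is a product of separable extensions, hence étale over $\kappa(\mathfrak p)$. Finite freeness supplies flatness. Everything else is routine.
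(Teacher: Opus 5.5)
Your proposal is correct. It differs from the paper only in that the paper gives no argument here: it packages two standard facts with citations. Those are the Atiyah--Macdonald exercise, whose proof is the generic linear change of variables over an infinite field (harmless in characteristic zero), and EGA~IV, which checks \'{e}taleness through the general characterization in 17.6.1 (flat plus unramified) rather than by an explicit computation.

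You instead give a self-contained proof. Nagata's substitution handles normalization. For generic \'{e}taleness, you pick a basis of $\operatorname{Frac}B$ inside $B$, clear denominators to get a free module over $A_g$, invert the trace discriminant $\delta$, and check the fibers through the nondegenerate trace form. This costs length but buys concreteness. It produces a single principal localization over which the algebra is finite \emph{free} with unit discriminant, which is exactly the shape needed in Proposition~\ref{prop:controlled-place}. There the paper instead invokes EGA~IV (18.3.1) for projectivity and then localizes a second time to obtain freeness.

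Two minor bookkeeping points:
\begin{itemize}
\item Your $\delta$ lies in $A_g$, not in $A$. Write $\delta=a/g^m$ with $a\in A$ and take the nonempty open set $D(ga)$.
\item The fiberwise step ``reduced $\Rightarrow$ product of separable extensions'' uses that every residue field $\kappa(\mathfrak p)$ has characteristic zero. This holds because everything lies over a characteristic-zero field, but it should be said.
\end{itemize}
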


We apply the two assertions of Proposition~\ref{prop:normalization-input}
successively to obtain the finite \'{e}tale presentation used in
Proposition~\ref{prop:controlled-place}.

\begin{proposition}[Local Jacobian presentations]\label{prop:jacobian-input}
Over an arbitrary base ring, a smooth morphism admits local polynomial
presentations, allowing principal localizations, in which a Jacobian
minor of size equal to the number of equations is invertible;
a minor is the determinant of a square submatrix of partial
derivatives \cite[Corollary~17.12.2]{EGAIV4}.
\end{proposition}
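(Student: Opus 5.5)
The statement is the standard local structure theorem for smooth morphisms, so the plan is to reduce it to the conormal sequence together with a Nakayama-type argument, following the route of \cite[Corollary~17.12.2]{EGAIV4}.

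\textbf{Setup.} Let $\mathcal X\to\operatorname{Spec}B$ be smooth and fix a point $x\in\mathcal X$. Since smooth morphisms are locally of finite presentation, we may replace $\mathcal X$ by an affine neighbourhood and write $\mathcal X=\operatorname{Spec}A$ with
\[
A=P/I,\qquad P=B[x_1,\ldots,x_n],\qquad I=(f_1,\ldots,f_t)
\]
finitely generated. Smoothness gives the split exact conormal sequence
\[
0\to I/I^2\to\Omega_{P/B}\otimes_PA\to\Omega_{A/B}\to0 .
\]
Here $\Omega_{P/B}\otimes_PA$ is free on $dx_1,\ldots,dx_n$, and $\Omega_{A/B}$ is finite projective. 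Hence $I/I^2$ is finite projective, of some rank $c$ near $x$.

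\textbf{Choosing the equations and the minor.} At the point $x$, the images of $df_1,\ldots,df_t$ span the image of $I/I^2\otimes\kappa(x)$, which has dimension $c$. After renumbering, $df_1,\ldots,df_c$ are linearly independent there. Since their span is a direct summand complementary to $\Omega_{A/B}$, we can choose $c$ of the coordinates, say $x_{n-c+1},\ldots,x_n$, such that the minor
\[
\Delta=\det\bigl(\partial f_i/\partial x_{n-c+j}\bigr)_{1\le i,j\le c}
\]
is nonzero in $\kappa(x)$. Localizing at a principal open set on which $\Delta$ is a unit, the classes of $f_1,\ldots,f_c$ map isomorphically onto a free direct summand of rank $c$. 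Because $I/I^2$ is locally of rank $c$, they generate $I/I^2$ there, so $I=J+I^2$ after localization, where $J=(f_1,\ldots,f_c)$.

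\textbf{Main obstacle: from $I/I^2$ to $I$.} The step I expect to require the most care is upgrading $I=J+I^2$ to $I=J$, because the presentation must be cut out by exactly $c$ equations. Since $I$ is finitely generated, apply the determinant trick (Nakayama) to the finitely generated module $I/J$, which satisfies $I\cdot(I/J)=I/J$. This produces $e\in I$ with $(1-e)I\subseteq J$. The element $1-e$ restricts to $1$ on $\mathcal X=V(I)$, so the principal localization at $g=(1-e)\Delta$ still contains $x$. On this localization,
\[
\mathcal X\cap D(g)=\operatorname{Spec}\bigl(B[x_1,\ldots,x_n]_g/(f_1,\ldots,f_c)\bigr),
\]
and the $c\times c$ Jacobian minor $\Delta$ is invertible. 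This is the asserted presentation.

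Finally, covering $\mathcal X$ by such neighbourhoods yields the local presentations. No Noetherian hypothesis on $B$ is used anywhere, because finite presentation replaces Noetherianity throughout.
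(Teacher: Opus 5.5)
Your proof is correct. There is nothing in the paper to compare it against: the paper records this proposition as a black-box input, justified only by the citation to EGA~IV, Corollary~17.12.2. It uses the proposition solely to obtain the presentation \eqref{eq:smooth-local-presentation} in Lemma~\ref{lem:integral-tube}.

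What you give is the standard self-contained derivation that a smooth algebra is locally standard smooth:
\begin{enumerate}[label=\textup{(\arabic*)},leftmargin=*]
\item split exactness of the conormal sequence, from the formal-smoothness criterion;
\item a $c\times c$ Jacobian minor $\Delta$ that is nonzero at $x$;
\item generation of $I/I^2$ by $f_1,\dots,f_c$ over $D(\Delta)$;
\item the determinant trick, which upgrades $I=J+I^2$ to $I=J$ after inverting $1-e$.
\end{enumerate}
Compared with the bare citation, your argument shows exactly where finite presentation replaces Noetherianity. It enters through finite generation of $I$ in the determinant trick and through finite projectivity of $\Omega_{A/B}$. This is precisely the point the paper stresses when it applies the result over non-Noetherian valuation rings.

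Two small points should be tightened; neither affects the conclusion.
\begin{itemize}[leftmargin=*]
\item On $D(\Delta)$ you only know that $(I/I^2)_\Delta\simeq A_\Delta^c\oplus Q$, where $Q$ is finite projective. Your argument gives $Q\otimes\kappa(x)=0$, but the rank of $I/I^2$ could exceed $c$ at other points of $D(\Delta)$. So before concluding that $f_1,\dots,f_c$ generate, also invert (a lift of) an element of $\operatorname{Ann}(Q)$ that does not vanish at $x$. Equivalently, restrict to the open-closed locus where the rank equals $c$.
\item Since $e$ lies in $I_\Delta$ rather than in $I$, write $e=e'/\Delta^k$ and localize the polynomial ring at $\Delta(\Delta^k-e')$. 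This gives a genuine principal localization of $B[x_1,\ldots,x_n]$, in which $\Delta$ remains a unit because it divides the inverted element.
\end{itemize}
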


The formulation in Proposition~\ref{prop:jacobian-input} is valid
for the non-Noetherian valuation rings occurring in
Lemma~\ref{lem:integral-tube}.

\begin{definition}[Henselian local rings]\label{def:henselian}
A local ring $(A,\mathfrak m)$ is \emph{henselian} if, for every monic
polynomial $f\in A[T]$, each simple root of its reduction
$\bar f\in(A/\mathfrak m)[T]$ lifts to a root of $f$ in $A$.
\end{definition}

\begin{proposition}[Lifting finite \'{e}tale algebras]\label{prop:etale-lifting-input}
For a henselian local ring $(A,\mathfrak m)$
(Definition~\ref{def:henselian}) with residue field $\kappa$,
reduction induces an equivalence between finite \'{e}tale $A$-algebras
and finite \'{e}tale $\kappa$-algebras
\cite[Proposition~18.5.15]{EGAIV4}. The latter are finite products of
finite separable field extensions, where separability means that
minimal polynomials have distinct roots
\cite[Proposition~17.15.11]{EGAIV4}. In particular, each residue-field
factor---one field in that product decomposition---lifts to a finite
\'{e}tale local $A$-algebra.
\end{proposition}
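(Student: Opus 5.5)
The proposition bundles three assertions: the equivalence of categories, the structure of finite \'{e}tale algebras over a field, and the lifting of a single residue-field factor to a local algebra. I would prove them in the order structure, lifting, equivalence, using only Definition~\ref{def:henselian} together with its standard reformulation. That reformulation says that a local ring $A$ is henselian iff every finite $A$-algebra $B$ is a finite product of local rings, iff idempotents of $B/\mathfrak mB$ lift uniquely to idempotents of $B$. I would take it as known, or derive it from the root-lifting property applied to characteristic polynomials of elements of $B$.

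\textbf{Algebras over a field.} A finite \'{e}tale $\kappa$-algebra $L$ is Artinian. It is also reduced, since \'{e}tale implies geometrically reduced. Hence $L$ is a finite product of fields. Each factor $\kappa'$ is unramified over $\kappa$, so $\kappa'\otimes_\kappa\overline\kappa$ is reduced. This is exactly separability: minimal polynomials have distinct roots.

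\textbf{Lifting one factor.} Let $\kappa'$ be a factor and choose a primitive element, so that $\kappa'=\kappa[T]/(\bar f)$ with $\bar f$ monic and separable. Lift $\bar f$ to a monic $f\in A[T]$ and put $B=A[T]/(f)$. Then $B$ is finite free over $A$. Its discriminant reduces to $\operatorname{disc}(\bar f)\neq0$, so it is a unit in $A$. Hence $B$ is \'{e}tale, being standard \'{e}tale with $f'$ invertible in $B$. The closed fibre $B/\mathfrak mB=\kappa'$ is a field, so $B$ has exactly one maximal ideal and is therefore local. Taking products of such lifts shows that reduction is essentially surjective.

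\textbf{Full faithfulness.} This is the step I expect to be the main obstacle. For finite \'{e}tale $B,C$ over $A$, I would identify $\operatorname{Hom}_A(B,C)$ with the sections of the finite \'{e}tale map $\operatorname{Spec}(B\otimes_AC)\to\operatorname{Spec}C$. Since an \'{e}tale map is unramified, such sections are open immersions, and they are also closed. So they correspond to idempotents $e\in B\otimes_AC$ for which the composite $C\to(B\otimes_AC)e$ is an isomorphism. The same description holds over $\kappa$.

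The algebra $B\otimes_AC$ is finite over $A$, so by the henselian property its idempotents biject with those of $(B\otimes_AC)\otimes_A\kappa$. It then remains to check that the condition ``$C\to(B\otimes C)e$ is an isomorphism'' can be detected modulo $\mathfrak m$. Both $C$ and $(B\otimes C)e$ are finite projective over $A$, so the map is a map of finite projective modules. Nakayama's lemma gives surjectivity from surjectivity modulo $\mathfrak m$. Equality of ranks then gives injectivity, because a surjection between finite projective modules of equal rank is an isomorphism.

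If $C$ is not local, I would first decompose it into local factors using the henselian decomposition, to keep the rank bookkeeping clean. This establishes the equivalence. The final assertion, that each residue-field factor lifts to a finite \'{e}tale local $A$-algebra, is then the lifting step above.
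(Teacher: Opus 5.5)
The paper gives no proof of this proposition. It quotes the equivalence and the structure of finite \'{e}tale algebras over a field from EGA~IV (18.5.15 and 17.15.11), and reads off the final sentence from the equivalence.

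Your proposal reconstructs the standard argument, and it is correct.
\begin{itemize}
\item Monogenic lifts $A[T]/(f)$ of the separable factors give essential surjectivity.
\item Full faithfulness follows in three steps. You describe $\operatorname{Hom}_A(B,C)$ by the idempotents $e\in B\otimes_AC$ for which $C\to(B\otimes_AC)e$ is an isomorphism. You lift such idempotents uniquely from $(B\otimes_AC)\otimes_A\kappa$. You then detect the isomorphism modulo $\mathfrak m$ by Nakayama plus a rank count.
\end{itemize}

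What your route buys over the citation is a clear view of where henselianness enters. The lift of one residue-field factor to a finite \'{e}tale local algebra holds over any local ring, and so does essential surjectivity. Henselianness is used only for full faithfulness. That stronger part is what the paper actually needs in Proposition~\ref{prop:controlled-place}. There the given finite \'{e}tale $\cF^\circ$-algebra $B$ must split as $\cO_L\times B'$, and $\cF^\circ[\theta]$ must be identified with $\cO_L$. Your full-faithfulness step supplies both.

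One caveat concerns the idempotent-lifting characterization of henselian rings. Cite it (EGA~IV, Section~18.5) rather than suggesting it can be derived from root-lifting applied to characteristic polynomials. For a lift $x$ of an idempotent $\bar e$ in a finite free $A$-algebra such as $B\otimes_AC$, the reduced characteristic polynomial of $x$ is $T^a(T-1)^b$. Its roots are generally not simple, so Definition~\ref{def:henselian} does not apply to it directly. What is needed is lifting of coprime factorizations. Its equivalence with simple-root lifting for monic polynomials is the substantive content of that theorem.

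One simplification: the preliminary decomposition of $C$ into local factors is unnecessary. Finite projective modules over the local ring $A$ are free, so the rank comparison is automatic.
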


We use Proposition~\ref{prop:etale-lifting-input} in
Proposition~\ref{prop:controlled-place} to select the required valued
realization.

\begin{definition}[Bundles and the tautological line bundle]\label{def:bundles}
For a vector bundle $E$ of positive rank on $S$, write
$\operatorname{Tot}(E)$ for its total space and $0_S$ for its zero
section. We use the convention that $\mathbb P_S(E)$ parametrizes
lines in the fibers of $E$; the tautological line bundle is then
$\mathcal O_{\mathbb P_S(E)}(-1)$. A trivialization identifies a vector
bundle locally with a product and determines the corresponding
transition functions. The blow-up of $Y$ along a closed center $W$ is
denoted by $\operatorname{Bl}_W(Y)$.
\end{definition}

\begin{proposition}[Blow-up along the zero section]\label{prop:bundle-blowup-input}
For a vector bundle $E$ of positive rank on $S$, as in
Definition~\ref{def:bundles}, the blow-up of its total space along
the zero section is
\begin{equation}\label{eq:blowup-bundle}
 \operatorname{Bl}_{0_S}\bigl(\operatorname{Tot}(E)\bigr)
 \simeq
 \operatorname{Tot}\bigl(\mathcal O_{\mathbb P_S(E)}(-1)\bigr).
\end{equation}
The inverse image of $0_S$ is the zero section of the tautological
line bundle
\cite[Corollary~8.7.7 and Remark~8.7.8]{EGAII}.
\end{proposition}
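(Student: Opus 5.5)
The plan is to reduce to the case of a trivial bundle over an affine base, compute the blow-up there in explicit charts, and then glue using the canonical nature of both sides. Both $\operatorname{Bl}_{0_S}(\operatorname{Tot}(E))$ and $\operatorname{Tot}(\mathcal O_{\mathbb P_S(E)}(-1))$ are defined functorially and locally over $S$. Blow-ups commute with flat base change, in particular with restriction to opens, and the tautological line bundle is compatible with restriction. It therefore suffices to construct the isomorphism when $S=\operatorname{Spec}A$ is affine and $E$ is trivial. We then check that the construction is equivariant for changes of trivialization by $g\in\operatorname{GL}_r(A)$, so that the local isomorphisms glue over $S$.

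In the trivial case, $\operatorname{Tot}(E)=\operatorname{Spec}A[z_1,\ldots,z_r]$ and the zero section is cut out by $I=(z_1,\ldots,z_r)$. On the other side, $\mathbb P_S(E)=S\times\Pj^{r-1}$ with homogeneous coordinates $[Y_1:\cdots:Y_r]$. The total space of $\mathcal O(-1)$ is the incidence subscheme
\[
L=\{(\boldsymbol z,[Y]) : \boldsymbol z\in\text{the line }[Y]\}\subseteq\A^r_A\times\Pj^{r-1},
\]
defined by the equations $z_iY_j=z_jY_i$.

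I will verify directly that $L\to\A^r_A$ is the blow-up along $V(I)$. On the chart $Y_j\neq0$, put $y_i=Y_i/Y_j$ and $t=z_j$. The equations force $z_i=ty_i$, so the chart is $\operatorname{Spec}A[y_1,\ldots,\widehat{y_j},\ldots,y_r,t]$, an affine space over $A$. On this chart $I\cdot\mathcal O=(t)$, and $t$ is a nonzerodivisor, so the pullback of $I$ is an invertible ideal.

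Next I will show that $L$ satisfies the universal property of the blow-up. Let $\phi:Y\to\A^r_A$ be a morphism with $I\mathcal O_Y$ invertible. Locally on $Y$ this ideal is generated by some $z_j$, and then $z_i=z_j\cdot y_i$ for unique sections $y_i$. These sections define the unique factorization through the $j$th chart, and the factorizations agree on overlaps. Alternatively, one can use that $z_1,\ldots,z_r$ is a regular sequence. Then the Rees algebra $\bigoplus I^n$ equals the symmetric algebra, namely $A[\boldsymbol z][Y]/(z_iY_j-z_jY_i)$, whose $\operatorname{Proj}$ is $L$. The exceptional divisor is $V(t)$ on each chart. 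This is the locus $\boldsymbol z=0$ with $[Y]$ arbitrary, that is, the zero section of $\mathcal O(-1)$, which proves the second assertion.

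For gluing, a change of trivialization $\boldsymbol z\mapsto g\boldsymbol z$ acts on $\A^r_A$ and preserves $I$. It acts on $L$ by $(\boldsymbol z,[Y])\mapsto(g\boldsymbol z,[gY])$, and this action commutes with the projection to $\A^r_A$. By uniqueness in the universal property, the induced automorphism of the blow-up is this map. The local isomorphisms therefore agree with the transition functions of $E$ and of $\mathcal O_{\mathbb P(E)}(-1)$, and they glue.

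The main obstacle is this compatibility and gluing step. The local chart computation is routine, but one must ensure that the identification is canonical rather than chart-dependent. I would handle it entirely through the universal property, which makes every identification unique and hence automatically compatible.
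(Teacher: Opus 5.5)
Your proof is correct. It differs from the paper mainly because the paper gives no argument for this statement: it imports the isomorphism from EGA~II. Afterwards it uses only two features of it. One is the Rees-algebra description of the blow-up. The other is the chart picture in the proof of Proposition~\ref{prop:blowup}, where on $q_i=1$ one writes $\boldsymbol z=t\boldsymbol q$ and takes $t$ as the fiber coordinate of $\cO(-1)$. Your chart computation reproduces exactly that picture, and your universal-property argument turns it into a proof.

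Your ``alternative'' is the route the paper gestures at: take the relative $\operatorname{Proj}$ over $S$ of $\bigoplus_n I^n$, where $I^n=\operatorname{Sym}^{\geq n}(E^\vee)$. That route is global from the outset, so no gluing is needed. It also does not use that $z_1,\ldots,z_r$ is a regular sequence: on the chart where $z_j$ (placed in degree one) is inverted, the degree-zero part is visibly $A[z_j,z_1/z_j,\ldots,z_r/z_j]$. Your main route is more elementary and produces the explicit coordinates directly, at the cost of a canonicity step.

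That step can be done without any $\operatorname{GL}_r(A)$ bookkeeping, as you suggest. The map from $\operatorname{Tot}(\cO_{\Pj_S(E)}(-1))$ into the total space of the pullback of $E$ to $\Pj_S(E)$, followed by projection to $\operatorname{Tot}(E)$, is defined globally. Your charts show that the pulled-back ideal of $0_S$ is invertible. The universal property then gives a global morphism to $\operatorname{Bl}_{0_S}(\operatorname{Tot}(E))$ over $\operatorname{Tot}(E)$, and whether it is an isomorphism is local on $S$.

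One step you leave implicit in the universal-property argument is that an invertible ideal generated by $z_1,\ldots,z_r$ is locally generated by a single $z_j$. This follows from Nakayama's lemma in the local rings of $Y$.
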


\section{Controlled nonarchimedean realizations}
\label{sec:places}

\subsection{Presentation complexity}

For arbitrary characteristic-zero fields, our argument requires
quantitative control of valuations beyond that available over
$\overline{\mathbb Q}$.
An abstract embedding of a finitely generated field into a $p$-adic field,
as in \cite[Proposition~4.4]{Bell2010}, preserves algebraic identities but
does not bound the valuation of a varying sequence of nonzero elements. The
contracting argument requires such a bound. In
Definition~\ref{def:presentation-complexity}, we measure the degrees and
coefficient sizes of quotient presentations; in
Proposition~\ref{prop:controlled-place}, we bound valuations by this measure
in \eqref{eq:controlled-valuation}. We apply the estimate in the contracting
case of Theorem~\ref{thm:line-bundle}.

\begin{definition}[Presentation complexity]\label{def:presentation-complexity}
Let
$R=\mathbb Z[\alpha_1,\ldots,\alpha_s]$
be a finitely generated integral domain of characteristic zero, and put
$K=\operatorname{Frac}(R)$. Write
$\boldsymbol\alpha=(\alpha_1,\ldots,\alpha_s)$ and
$\boldsymbol X=(X_1,\ldots,X_s)$, so that $P(\boldsymbol\alpha)$
means evaluation of $P$ at the chosen generators. For a nonzero
polynomial $P\in\mathbb Z[X_1,\ldots,X_s]$, let $\deg P$ be its
total degree and set
\begin{equation}\label{eq:polynomial-size}
 \sigma(P)=
 (1+\deg P)\log(2+\deg P)+\log(1+\lVert P\rVert_1),
\end{equation}
where $\lVert P\rVert_1$ is the coefficient $\ell^1$-norm formed
with the ordinary absolute value on the integer coefficients, not with a
nonarchimedean absolute value. For $\xi\in K^\times$, define
its presentation complexity by
\begin{equation}\label{eq:presentation-complexity}
 \comp_R(\xi)=
 \min_{\xi=P(\boldsymbol\alpha)/Q(\boldsymbol\alpha)}
 \max\{\sigma(P),\sigma(Q)\},
\end{equation}
where $P,Q\in\mathbb Z[\boldsymbol X]$ and
$Q(\boldsymbol\alpha)\ne0$.  Put $\comp_R(0)=0$, and define the complexity
of a finite tuple to be the maximum of the complexities of its entries.
The minimum in \eqref{eq:presentation-complexity} is attained: below any
fixed bound for $\sigma$ there are only finitely many polynomials
with integer coefficients,
since both their degrees and their coefficient norms are bounded.  Every
$\xi\in K^\times$ has at least one such quotient presentation.
All logarithms are natural logarithms. The notation $\comp_R$
always refers to a fixed finite generating tuple for $R$; that tuple
is part of the chosen presentation.
\end{definition}

For a tuple $\boldsymbol\xi=(\xi_1,\ldots,\xi_t)$ in the domain of
a rational function $H$, Lemma~\ref{lem:presentation-calculus} gives
comparison estimates showing that the subsequent growth conditions are
independent of the chosen finite presentation.

\begin{lemma}[Stability of presentation complexity]
\label{lem:presentation-calculus}
Let $R=\mathbb Z[\boldsymbol\alpha]$ be as in
Definition~\ref{def:presentation-complexity}.
\begin{enumerate}[label=\textup{(\roman*)},leftmargin=*]
\item For a fixed $H\in K(Y_1,\ldots,Y_t)$ there are constants
$A_H,B_H>0$ such that, whenever $H(\boldsymbol\xi)$ is defined,
$\comp_R\bigl(H(\boldsymbol\xi)\bigr) \leq A_H\bigl(1+\comp_R(\boldsymbol\xi)\bigr)+B_H$.
\item If $R_1,R_2$ are finitely generated $\mathbb Z$-domains with
fraction field $K$, then there are constants $A,B>0$ such that
$\comp_{R_2}(\xi)\leq A\comp_{R_1}(\xi)+B\quad(\xi\in K)$.
The reverse inequality holds with possibly different constants.
\end{enumerate}
\end{lemma}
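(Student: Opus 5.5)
We prove both parts by explicit substitution into quotient presentations. The only tools are submultiplicativity of the $\ell^1$-norm, $\lVert PQ\rVert_1\le\lVert P\rVert_1\lVert Q\rVert_1$ for $P,Q\in\mathbb Z[\boldsymbol X]$, and additivity of degree under products. We then check that the resulting degree and norm bounds convert into a bound on $\sigma$ of the form $A\sigma+B$. Throughout we use the elementary fact that if $\deg$ grows by at most a factor $c$ and an additive constant, then $(1+\deg)\log(2+\deg)$ grows by at most a constant factor plus a constant, since $\log(2+ce)\le\log(2c)+\log(2+e)$ for $c\geq1$.

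For (ii), we do (ii) first since it contains the core estimate. Let $R_2=\mathbb Z[\boldsymbol\beta]$ with $\boldsymbol\beta=(\beta_1,\dots,\beta_u)$. Fix once and for all presentations $\beta_j=P_j(\boldsymbol\alpha)/Q_j(\boldsymbol\alpha)$ with $Q_j(\boldsymbol\alpha)\ne0$, a common degree bound $e_0$, and $L=1+\max_j(\lVert P_j\rVert_1+\lVert Q_j\rVert_1)$. Given $\xi=P(\boldsymbol\beta)/Q(\boldsymbol\beta)$ attaining $\comp_{R_2}(\xi)$, set $D=\max(\deg P,\deg Q)$ and $\Pi=\prod_jQ_j^{D}$. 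Define $\tilde P=\Pi\cdot P(P_1/Q_1,\dots,P_u/Q_u)$ and $\tilde Q$ analogously. Each monomial $Y^I$ of $P$, with $|I|\le D$, contributes $\prod_jP_j^{i_j}Q_j^{D-i_j}$, a polynomial of degree at most $uDe_0$ and $\ell^1$-norm at most $L^{uD}$. Hence $\deg\tilde P\le uDe_0$ and $\log(1+\lVert\tilde P\rVert_1)\le\log(1+\lVert P\rVert_1)+uD\log L+\log 2$, and similarly for $\tilde Q$. Moreover $\tilde Q(\boldsymbol\alpha)=\Pi(\boldsymbol\alpha)Q(\boldsymbol\beta)\ne0$, and $\tilde P(\boldsymbol\alpha)/\tilde Q(\boldsymbol\alpha)=\xi$. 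Since $D\le\max(\sigma(P),\sigma(Q))$ and the degree term of $\sigma$ is superlinear in $D$, this gives $\max(\sigma(\tilde P),\sigma(\tilde Q))\le A\comp_{R_2}(\xi)+B$. The case $\xi=0$ is trivial. The reverse inequality follows by exchanging the roles of $R_1$ and $R_2$.

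For (i), write $H=A/B$ with $A,B\in K[Y_1,\dots,Y_t]$ coprime. After clearing denominators we may take $A,B$ to have coefficients in $R$, hence coefficients of the form $a_J(\boldsymbol\alpha)$, $b_J(\boldsymbol\alpha)$ for fixed integer polynomials. Here "$H(\boldsymbol\xi)$ defined" means $B(\boldsymbol\xi)\ne0$. Given optimal presentations $\xi_i=P_i(\boldsymbol\alpha)/Q_i(\boldsymbol\alpha)$, using $0/1$ if $\xi_i=0$, all with $\sigma\le c:=\comp_R(\boldsymbol\xi)$, we run the same homogenization with $D=\max(\deg A,\deg B)$ and multiplier $\prod_iQ_i^{D}$. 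Now the roles are reversed: the outer polynomial is fixed and the inner ones vary. Each term has degree at most $\deg a_J+tD\cdot c$ and log-norm at most $\log\lVert a_J\rVert_1+tD\cdot c$, because $\deg P_i\le c$ and $\log(1+\lVert P_i\rVert_1)\le c$. Summing over the fixed finite set of $J$ adds only a constant, and the denominator is nonzero at $\boldsymbol\alpha$ because $B(\boldsymbol\xi)\ne0$ and $Q_i(\boldsymbol\alpha)\ne0$. The resulting degree is $O(c)$, so $\sigma$ is $O\bigl((1+c)\log(2+c)\bigr)$. To get the stated linear bound, we use that $\deg P_i\log(2+\deg P_i)\le c$, and hence the new degree term is at most a constant times the old one. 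Plugging this in yields $A_H(1+c)+B_H$.

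The main obstacle is this last conversion in (i), getting a bound linear in $\sigma$ rather than in $(\deg)\log(\deg)$. It works because the outer polynomial has bounded degree $D$, so the new degree is at most $tD\max_i\deg P_i+O(1)$. Then $(1+tDe)\log(2+tDe)\le tD(1+e)\bigl(\log(2+e)+\log(tD)\bigr)$, which is at most a constant multiple of $\sigma$, since $\sigma\ge\log 2>0$ absorbs the additive $\log(tD)$ term. All remaining verifications are routine bookkeeping.
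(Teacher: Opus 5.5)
Your proposal is correct and follows essentially the same route as the paper. In both parts you substitute optimal quotient presentations, multiply numerator and denominator by a $D$th power of the product of the inner denominators, and use submultiplicativity of the $\ell^1$-norm together with the $(1+\deg)\log(2+\deg)$ bookkeeping. Your $0/1$ convention for vanishing coordinates replaces the paper's substitution of zero patterns into $H$.

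One labeling slip: in (ii) your computation turns an optimal $R_2$-presentation into an $R_1$-presentation, so it proves $\comp_{R_1}(\xi)\le A\comp_{R_2}(\xi)+B$ first. This is harmless, because the argument is symmetric.
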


\begin{proof}
For \textup{(i)}, we first substitute any zero coordinates into $H$;
there are only finitely many possible patterns. If the resulting
function is constant, it contributes only a fixed bound; a zero value
has complexity zero. In the remaining cases, we choose presentations
$\xi_i=P_i(\boldsymbol\alpha)/Q_i(\boldsymbol\alpha)$ attaining their
complexities, and set
\[
 N=\max_i\{\deg P_i,\deg Q_i\},\qquad
 L=\max_i\{\log(1+\lVert P_i\rVert_1),
                 \log(1+\lVert Q_i\rVert_1)\}.
\]
We write $H=U/V$, clear its fixed coefficient denominators, and then
multiply the numerator and denominator by $\prod_i Q_i^D$, where
$D=\max(\deg U,\deg V)$. Submultiplicativity of the coefficient
$\ell^1$-norm gives a resulting numerator and denominator of degree
$O(N+1)$ and logarithmic coefficient norm $O(L+1)$, with constants
depending only on $H$ and the presentation of $K$. Consequently, the
resulting presentation sizes are $O((N+1)\log(N+2)+L)$, which
establishes \textup{(i)}.

For \textup{(ii)}, we write each generator of $R_1$ as a fixed quotient
in the generators of $R_2$, with a common denominator. Substitution
in a quotient presentation of degree at most $N$ and logarithmic
coefficient norm at most $L$, followed by clearing denominators,
gives polynomials of degree $O(N+1)$ and logarithmic coefficient norm
$O(L+N+1)$. Substitution in \eqref{eq:polynomial-size} gives the asserted
comparison. We obtain the reverse inequality by interchanging the two
generating tuples.
\end{proof}

In particular, for $b_n\to\infty$, the growth conditions $O(b_n)$
and $o(b_n)$ are unchanged by replacing the chosen finite presentation
of $K$.

\subsection{A finite-residue place with a quantitative bound}

\begin{definition}[Controlled places]\label{def:controlled-place}
For $R$ and $K$ as in Definition~\ref{def:presentation-complexity}, a
\emph{controlled place} is an embedding $\iota:K\hookrightarrow\cK$
into a complete rank-one valued field with finite residue field,
together with a constant $A>0$ such that
$v(\iota(\xi))\leq A\comp_R(\xi)$ for every $\xi\in K^\times$.
The valuation and residue-field conventions are those of
Definitions~\ref{def:valuations} and~\ref{def:valuation-rings}.
\end{definition}

In Proposition~\ref{prop:controlled-place}, we construct a controlled place
while preserving any denominators specified in advance. In the proof, the
\emph{content} of an integer polynomial is the positive greatest common
divisor of its coefficients.

\begin{definition}[Field norms]\label{def:field-norm}
For a finite field extension $E/D$, let $[E:D]=\dim_D E$. The field norm
$N_{E/D}(a)$ is the determinant of multiplication by $a$ on the
$D$-vector space $E$. The same determinant convention is used for a finite
free algebra over a ring.
\end{definition}

\begin{proposition}[Controlled place]\label{prop:controlled-place}
Let $R$ and $K=\operatorname{Frac}(R)$ be as in
Definition~\ref{def:presentation-complexity}. Fix
$g\in R\setminus\{0\}$ and a finite set
of rational primes.  There are an element $h\in R\setminus\{0\}$, the
localization $R'=R[1/(gh)]$, and

\begin{enumerate}[label=\textup{(\roman*)},leftmargin=*]
\item a prime $p\ge3$;
\item a complete rank-one nonarchimedean field $\mathcal K$ with valuation
$v$, normalized by $v(p)=1$, and with finite residue field;
\item an injective homomorphism $\iota:K\hookrightarrow\mathcal K$;
\end{enumerate}

such that $p$ is not excluded, $\iota(R')$ lies in the valuation ring
$\mathcal K^\circ$, and, for some constant $A>0$ depending only on the
chosen presentation and localization,
\begin{equation}\label{eq:controlled-valuation}
 v\bigl(\iota(\xi)\bigr)
 \le A\comp_R(\xi)
 \qquad(\xi\in K^\times).
\end{equation}
The elements $g$ and $h$, and hence every element of the multiplicative set
used to define $R'$, map to units of $\mathcal K^\circ$.
\end{proposition}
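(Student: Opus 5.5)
The plan is to reduce to a polynomial ring, build a controlled place there by hand, and then lift it through a finite \'{e}tale extension by taking norms. Let $t$ be the transcendence degree of $K$ over $\mathbb Q$.

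\textbf{Step 1: a finite \'{e}tale presentation.} By Proposition~\ref{prop:normalization-input}, after inverting a suitable nonzero integer we obtain a finite injective map $\mathbb Z[1/N_0][x_1,\ldots,x_t]\to R[1/N_0]$. Generic \'{e}taleness then gives $h\in R\setminus\{0\}$ and $0\ne b\in\mathbb Z[1/N_0][\boldsymbol x]$ with the following properties:
\begin{itemize}
\item[(a)] $R'=R[1/(gh)]$ is finite \'{e}tale over $R_0=\mathbb Z[1/N_0][\boldsymbol x][1/b]$;
\item[(b)] $h$ is divisible by $N_0 b$ and by the norm of $g$.
\end{itemize}
We then fix a prime $p\ge3$ with $p\nmid N_0$ that avoids the finite excluded set.

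\textbf{Step 2: the purely transcendental field.} Next we choose $\boldsymbol\theta=(\theta_1,\ldots,\theta_t)\in\mathbb Z_p^t$ with three properties:
\begin{itemize}
\item[(a)] the $\theta_i$ are algebraically independent over $\mathbb Q$;
\item[(b)] $b(\boldsymbol\theta)\in\mathbb Z_p^\times$;
\item[(c)] for every nonzero $P\in\mathbb Z[\boldsymbol X]$,
\begin{equation*}
 v_p\bigl(P(\boldsymbol\theta)\bigr)\le A_0\,\sigma(P).
\end{equation*}
\end{itemize}
A generic measure-theoretic choice only gives bounds of order $(\deg P)^t\log\lVert P\rVert_1$, which is too weak. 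I would therefore use a lacunary construction: set $\theta_i=\beta_i+\sum_j p^{k_{i,j}}$, with the units $\beta_i$ chosen so that (b) holds, and with exponents growing super-exponentially and interleaved between the coordinates. For $P$ of degree $D$, compare $P(\boldsymbol\theta)$ with $P$ evaluated at a truncation whose cut-off level depends on $D$. The mixed monomials of degree $\le D$ in the new tails occupy distinct $p$-adic positions. Hence the first such position at which some coefficient of $P$ (or of one of its derivatives at the truncation) is a $p$-adic unit modulo the content survives. This yields
\begin{equation*}
 v_p\bigl(P(\boldsymbol\theta)\bigr)\le v_p(\mathrm{content}(P))+c\,D\log(D+2),
\end{equation*}
and the content term is at most $\log\lVert P\rVert_1/\log p$. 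The same non-cancellation gives (a). The embedding $x_i\mapsto\theta_i$ then extends to $\mathrm{Frac}(R_0)\hookrightarrow\mathbb Q_p$, with $R_0$ landing in $\mathbb Z_p$.

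\textbf{Step 3: lifting through the finite \'{e}tale cover.} Base change gives a finite \'{e}tale $\mathbb Z_p$-algebra $R'\otimes_{R_0}\mathbb Z_p$. Since $\mathbb Z_p$ is henselian, Proposition~\ref{prop:etale-lifting-input} applies, and we pick one local factor. It is the ring of integers of a finite unramified extension $\mathcal K$ of $\mathbb Q_p$, which is complete, rank one, and has finite residue field. The projection to this factor gives $\iota$ with $\iota(R')\subseteq\mathcal K^\circ$. Injectivity on $K$ follows from the algebraic independence in Step~2 together with $K$ being a field. Because $gh$ is invertible in $R'$, both $g$ and $h$ map to units.

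\textbf{Step 4: the valuation bound \eqref{eq:controlled-valuation}.} Write $\xi=P(\boldsymbol\alpha)/Q(\boldsymbol\alpha)$ with an optimal presentation; it suffices to bound $v(\iota P(\boldsymbol\alpha))$. Put $\eta=P(\boldsymbol\alpha)\in R'$. Every conjugate of $\eta$ in the finite \'{e}tale extension is integral, so
\begin{equation*}
 v(\iota\eta)\le v\bigl(N_{R'/R_0}(\eta)\bigr).
\end{equation*}
The norm is a fixed polynomial in the coordinates of $\eta$ with respect to a basis, or is computed locally via a determinant of multiplication. Its numerator in $\mathbb Z[\boldsymbol x]$ therefore has degree $O(\deg P+1)$ and log-height $O(\log\lVert P\rVert_1+\deg P+1)$. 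The reasoning is the same as in Lemma~\ref{lem:presentation-calculus}; the denominators are powers of $N_0b$, which are units. Applying Step~2 to this numerator gives the bound with $A$ depending only on the presentation and the localization.

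\textbf{Main obstacle.} The main obstacle is Step~2, namely obtaining the bound $O(D\log D+\log\lVert P\rVert_1)$ uniformly in all degrees $D$ in several variables. The danger is that the truncated polynomial may vanish, or that contributions at different levels may cancel. My first attempt would choose the exponents so that, for each $D$, some level among the first $O(D)$ levels isolates a nonzero coefficient of a Hasse derivative of $P$ at the truncation point.
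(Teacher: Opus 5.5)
Your Step~2(c) is not merely hard; it is impossible for every choice of $\boldsymbol\theta$. The obstruction is Dirichlet's box principle, not cancellation inside a particular construction.

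Fix $\boldsymbol\theta\in\mathbb Z_p^t$ with $t\geq1$, and put $N=\binom{D+t}{t}$. Consider the $(H+1)^N$ integer polynomials of degree at most $D$ with coefficients in $\{0,\ldots,H\}$. Since $\mathbb Z_p/p^k\mathbb Z_p$ has only $p^k$ elements, two of them agree modulo $p^k$ at $\boldsymbol\theta$ as soon as $(H+1)^N>p^k$. Their difference $P\neq0$ therefore satisfies
\[
 v_p\bigl(P(\boldsymbol\theta)\bigr)\geq\frac{N\log(H+1)}{\log p}-1 .
\]
With $H=\lfloor e^D\rfloor$ the right side is $\gg D^{t+1}$, whereas $\sigma(P)=O(D\log D)$. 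For $t\geq2$, even $H=1$ already gives $D^t$ against $D\log D$. So the generic bound you dismissed is essentially sharp, and (c) fails for every $\boldsymbol\theta$, already when $t=1$.

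The same counting applied to $P(x)$, for a single transcendental $x\in R$, shows more. Such $P(x)$ has complexity $O(D\log D)$, so \eqref{eq:controlled-valuation} fails for every embedding of $K$ into a finite extension of $\mathbb Q_p$. It fails likewise for any discretely valued field with finite residue field, because there $\mathcal K^\circ/\{v\geq k\}$ is finite. Thus your Step~3 choice of $\mathcal K$ is itself incompatible with the statement whenever $K$ has positive transcendence degree. Your argument is fine only in the number-field case $t=0$.

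The missing idea is to use a \emph{nondiscrete} rank-one valuation, which the statement deliberately allows.

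\begin{itemize}
\item The paper embeds $\mathbb Q(\boldsymbol t)$ into $E_0(X_1,\ldots,X_e)$ by $t_i\mapsto a_i+X_i$, where the $a_i$ lift a residue point at which the localizing denominator is nonzero.
\item It then takes the weighted Gauss valuation $v_0\bigl(\sum_I b_I\boldsymbol X^I\bigr)=\min_{b_I\neq0}\bigl(v_{E_0}(b_I)+I\cdot\boldsymbol\omega\bigr)$, with $1,\omega_1,\ldots,\omega_e$ linearly independent over $\mathbb Q$.
\item Rational independence keeps the residue field equal to the finite field $\kappa_0$. The value group, however, is dense, so the box principle no longer applies.
\item A top-degree coefficient $c_I$ of $H$ is unchanged by the translation. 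This gives directly $v_0\bigl(H(\boldsymbol a+\boldsymbol X)\bigr)\leq v_p(c_I)+|I|\max_i\omega_i$, which is linear in degree and logarithmic height and needs no Diophantine input.
\item Injectivity of the base embedding is automatic, since the $X_i$ are indeterminates.
\end{itemize}

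Completion preserves the value group and residue field and gives a henselian valuation ring. After that, your Steps~1, 3 and~4 go through essentially as you wrote them, with $\mathbb Z_p$ replaced by the non-Noetherian ring $\mathcal F^\circ$. The only extra work is to describe the lifted unramified factor explicitly as $\mathcal F^\circ[\theta]$, which yields the norm identity $v_0(N_{L/\mathcal F}(x))=f\,v(x)$ without discreteness.
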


\begin{proof}
We divide the proof into three parts: construction of a finite \'{e}tale
model, realization of that model over a valued field with finite residue
field, and the field-norm estimate of Definition~\ref{def:field-norm}.

\smallskip
\noindent\emph{A finite \'{e}tale model.}
We apply Noether normalization to $R\otimes_{\mathbb Z}\mathbb Q$.
The resulting finite extension of rational function fields is separable
because $K$ has characteristic zero. After clearing denominators and
inverting a discriminant, we obtain from
Proposition~\ref{prop:normalization-input} algebraically independent
elements $t_1,\ldots,t_e\in R$ and nonzero elements
$c\in\mathbb Z$ and $s\in\mathbb Z[t_1,\ldots,t_e]$ such that, after
enlarging $h$,
\begin{equation}\label{eq:finite-etale-presentation}
 A=\mathbb Z[1/c,t_1,\ldots,t_e,1/s]
 \longrightarrow R'
\end{equation}
is finite \'{e}tale.
The underlying $A$-module of this finite \'{e}tale algebra is finite
projective \cite[Proposition~18.3.1(ii)]{EGAIV4}; a further principal
localization therefore makes this module free. We absorb this localization into $c,s,h$,
retaining the notation $A,R'$. The free module has positive rank;
hence \eqref{eq:finite-etale-presentation} is faithfully flat.
When an element such as $g$ is inverted on the finite algebra, we also invert
its nonzero norm on the base; Cayley--Hamilton then expresses $g^{-1}$ inside
that finite base localization.  Thus the localizations used here retain
finiteness and may be collected into the single element $h$.

We choose a rational prime $p\geq3$ outside the excluded set and not
dividing $c$ or the content of $s$.  Then $s$ remains a nonzero polynomial over
$\overline{\mathbb F}_p$, so it is nonzero at some tuple in
$\overline{\mathbb F}_p^e$.  The coordinates of that tuple belong to one
finite extension of $\mathbb F_p$.  We use these coordinates as the residues of
the $t_i$, and denote the field that they generate by $\kappa_0$.
Faithful flatness of \eqref{eq:finite-etale-presentation} then
gives a closed point
$\mathfrak m\in\operatorname{Spec}R'$
above that base point.  Its residue field
$\kappa=R'/\mathfrak m$ is a finite separable extension of $\kappa_0$ because
$R'/A$ is finite \'{e}tale; in particular, both fields are finite.
Because $\mathfrak m$ is a point of the localization $R'$, the images of
$g$ and $h$ in $\kappa$ are nonzero.

\smallskip
\noindent\emph{A valued realization.}
We let $E_0/\mathbb Q_p$ be the unramified extension with residue field
$\kappa_0$, and normalize its valuation by $v_{E_0}(p)=1$. We choose lifts
$a_i\in\cO_{E_0}$ of the images of the $t_i$
at $\mathfrak m$, and positive real numbers
$\omega_1,\ldots,\omega_e$ for which
$1,\omega_1,\ldots,\omega_e$ are linearly independent over $\mathbb Q$.
We write $\boldsymbol t=(t_1,\ldots,t_e)$,
$\boldsymbol a=(a_1,\ldots,a_e)$,
$\boldsymbol X=(X_1,\ldots,X_e)$ and
$\boldsymbol\omega=(\omega_1,\ldots,\omega_e)$; in this part of
the proof the variable tuple has length $e$. For a multi-index $I$,
$I\cdot\boldsymbol\omega=\sum_i i_i\omega_i$.
On $E_0(X_1,\ldots,X_e)$, we take the weighted Gauss valuation
\begin{equation}\label{eq:weighted-gauss}
 v_0\!\left(\sum_I b_I\boldsymbol X^I\right)
   =\min_{b_I\ne0}\bigl(v_{E_0}(b_I)+I\mathbin{\cdot}\boldsymbol\omega\bigr),
 \qquad v_0(p)=1,
\end{equation}
and embed $\mathbb Q(\boldsymbol t)$ by $t_i\mapsto a_i+X_i$.  Rational
independence makes the minimizing monomial unique. It also shows
that every element of value zero has residue in $\kappa_0$: if the leading
monomials of a numerator and denominator have equal value, they have the same
exponent and their leading coefficients have equal integral valuation.
The product of the two unique least-value monomials is the unique
least-value contribution to a product and therefore cannot cancel; hence
$v_0(fg)=v_0(f)+v_0(g)$. Comparing the minimum weights coefficientwise gives
$v_0(f+g)\geq\min\{v_0(f),v_0(g)\}$, and the valuation extends to quotients by
$v_0(f/g)=v_0(f)-v_0(g)$.
Thus the residue field of the restriction $v_0$ to
$\mathbb Q(\boldsymbol t)$ is exactly $\kappa_0$: the reverse inclusion
holds because the residues of the $t_i$ generate $\kappa_0$ over
$\mathbb F_p$.

We let $\cF$ be the completion of $\mathbb Q(\boldsymbol t)$ for $v_0$ and let
$\cF^\circ$ be its valuation ring.  Completion leaves both the value group
and the residue field unchanged: a nonzero element in the completion has
an approximation whose difference has strictly larger valuation, and hence
the same value and leading residue.  When $e=0$, the construction simply
means $\cF=\mathbb Q_p$ and $\kappa_0=\mathbb F_p$.  The elements $c$ and $s(\boldsymbol t)$
are units: $p\nmid c$, and $s(\boldsymbol a)$ has nonzero residue.  Hence
$A\to\cF^\circ$ is defined.  Base change gives a finite \'{e}tale algebra
$B=R'\otimes_A\cF^\circ$.
The chosen point $\mathfrak m$ gives $\kappa$ as one field factor
of the finite \'{e}tale $\kappa_0$-algebra
$B\otimes_{\cF^\circ}\kappa_0$.
Since $\cF$ is complete and its valuation has rank one, its
valuation ring $\cF^\circ$ is henselian
\cite[Section~4.1, pp.~85--86]{EnglerPrestel2005}.
This result applies to nondiscrete rank-one valuations as well.

By the equivalence in Proposition~\ref{prop:etale-lifting-input}
between finite \'{e}tale algebras over a henselian local ring and over
its residue field \cite[Proposition~18.5.15]{EGAIV4}, the chosen field factor
$\kappa$ lifts to a finite \'{e}tale local
$\cF^\circ$-algebra $\cO_L$, and the complementary residue
algebra lifts to a finite \'{e}tale algebra $B^{\prime}$. The
corresponding product decomposition is
\begin{equation}\label{eq:etale-factor}
 B\simeq\cO_L\times B^{\prime},
 \qquad
 \cO_L/\mathfrak m_{\cF^\circ}\cO_L\simeq\kappa.
\end{equation}

The selected factor admits an explicit description that also
identifies its valuation and norm without assuming discreteness. We put
$f=[\kappa:\kappa_0]$ and choose a monic irreducible polynomial $\bar Q$ defining this residue
extension, and lift its coefficients to a monic polynomial
$Q\in\cF^\circ[T]$.  In $L=\cF[T]/(Q)$ let $\theta$ denote the image
of $T$.  Every element has a unique expression
$x=\sum_{i<f}a_i\theta^i$.  Define
$v(x)=\min_{i<f}v_0(a_i)$.
To check multiplicativity, we divide nonzero $x,y$ by coefficients attaining
their respective minima.  Their normalized expressions lie in
$\cF^\circ[\theta]$ and have nonzero reductions in the field
$\kappa_0[T]/(\bar Q)$. The product has nonzero reduction, so
$v(xy)=v(x)+v(y)$. Thus $L$ is a domain and, because it is
finite-dimensional over the field $\cF$, is itself a field. Its valuation
ring is $\cF^\circ[\theta]$. A Cauchy sequence in $L$ is Cauchy in each
of its finitely many coefficients, and coefficientwise completeness of
$\cF$ proves completeness of $L$. Since $\bar Q$ is separable,
$Q'(\theta)$ has value zero and is a unit.  Hence
$\cF^\circ[\theta]$ is finite \'{e}tale with residue field $\kappa$,
and the same equivalence identifies it with the selected factor $\cO_L$.

The same description gives the norm identity needed for the
quantitative bound.  For $x\ne0$, we choose a coefficient $a$ with
$v_0(a)=v(x)$ and write $x=au$.
Both $u$ and $u^{-1}$ are integral, so their multiplication matrices are
inverse matrices over $\cF^\circ$ and $N_{L/\cF}(u)$ is a unit.
Consequently
\begin{equation}\label{eq:unramified-norm}
 v_0(N_{L/\cF}(x))=f\,v(x).
\end{equation}
Composing $R'\to B$ with
projection to this factor gives
$\iota:R'\longrightarrow\cO_L$
with reduction equal to the chosen closed point.

This homomorphism is injective. Indeed, its kernel has zero contraction
to $A$, meaning that its inverse image in $A$ is the zero ideal.  If it contained $0\ne b\in R'$, multiplication by $b$ on the finite
free $A$-module $R'$ would have nonzero determinant
$N_{K/\mathbb Q(\boldsymbol t)}(b)\in A$, and Cayley--Hamilton would place a
nonzero element of $A$ in the ideal $bR'$, a contradiction.  We may therefore
take $\cK=L$ and let $v$ be its unramified extension of $v_0$, still
normalized by $v(p)=1$.  Its residue field is the finite field $\kappa$, and
$g$ and $h$ reduce to nonzero elements and are therefore units.

\smallskip
\noindent\emph{The quantitative bound.}
We fix an $A$-basis of $R'$ of rank $q$.  Multiplication by $\alpha_i$ is
represented by a fixed matrix $B_i\in M_q(A)$.  We choose one common
denominator
$\Delta=c^a s(\boldsymbol t)^b$
such that every entry of every $\Delta B_i$ lies in
$\mathbb Z[\boldsymbol t]$.  If $P\in\mathbb Z[\boldsymbol X]$ has degree
$N$ and $P(\boldsymbol\alpha)\ne0$, then
\begin{equation}\label{eq:norm-determinant}
 N_{K/\mathbb Q(\boldsymbol t)}\bigl(P(\boldsymbol\alpha)\bigr)
 =\det P(B_1,\ldots,B_s)
 =\frac{H_P(\boldsymbol t)}{\Delta^{qN}}
\end{equation}
for a nonzero $H_P\in\mathbb Z[\boldsymbol t]$.
To estimate this determinant, we equip polynomial matrices with the
maximum row-sum norm formed from the coefficient $\ell^1$-norms of
their entries; this norm is submultiplicative.  We choose $C\geq1$ bounding the norms of
$\Delta I_q$ and all $\Delta B_i$, and choose $D_0$ bounding their
entry degrees.  If $P=\sum_I c_I\boldsymbol X^I$, then
\[
 S_P:=\Delta^N P(B_1,\ldots,B_s)
   =\sum_I c_I\Delta^{N-|I|}\prod_i (\Delta B_i)^{I_i}
\]
has entry degrees at most $D_0N$ and matrix norm at most
$C^N\lVert P\rVert_1$.  Since $H_P=\det S_P$, the determinant expansion
gives
$\deg H_P\leq qD_0N$ and $\lVert H_P\rVert_1\leq q!C^{qN}\lVert P\rVert_1^q$.
In particular there is a fixed constant $C_1$ such that
\begin{align}
 \deg H_P&\leq C_1(N+1),\label{eq:norm-degree}\\
 \log(1+\lVert H_P\rVert_1)
 &\leq C_1\bigl((N+1)\log(N+2)
                 +\log(1+\lVert P\rVert_1)\bigr).
 \label{eq:norm-height}
\end{align}
The slightly coarser form \eqref{eq:norm-height} matches the definition
\eqref{eq:polynomial-size} of $\sigma$; every constant is independent of $P$.

To convert these size estimates into valuation estimates, we use the
bound
\begin{equation}\label{eq:base-polynomial-bound}
 v_0\bigl(H(\boldsymbol a+\boldsymbol X)\bigr)
 \leq C_2\bigl(1+\deg H+\log(1+\lVert H\rVert_1)\bigr)
\end{equation}
for every nonzero $H\in\mathbb Z[\boldsymbol t]$, with $C_2$
independent of $H$. To prove it, we let $N=\deg H$ and select a nonzero coefficient $c_I$ with $|I|=N$.
In the translated polynomial $H(\boldsymbol a+\boldsymbol X)$, the
coefficient of $\boldsymbol X^I$ is still $c_I$: a monomial of degree at
most $N$ can contribute to that coefficient only if its exponent is $I$.
The weighted valuation \eqref{eq:weighted-gauss} therefore gives
\[
 v_0\bigl(H(\boldsymbol a+\boldsymbol X)\bigr)
 \leq v_p(c_I)+I\cdot\boldsymbol\omega
 \leq \frac{\log\lVert H\rVert_1}{\log p}
           +N\max_i\omega_i.
\]
Here $v_p(c_I)$ is the exponent of $p$ in the nonzero integer
$c_I$, and $p^{v_p(c_I)}\leq|c_I|\leq\lVert H\rVert_1$, with the
ordinary absolute value on that integer coefficient; when $e=0$, the weight
term is understood to be zero. This establishes
\eqref{eq:base-polynomial-bound} without any Diophantine approximation
condition on the chosen lifts $a_i$.

The denominator $\Delta$ in \eqref{eq:norm-determinant} is a unit at $v_0$.
Equations \eqref{eq:norm-degree}--\eqref{eq:base-polynomial-bound} therefore
give
\begin{equation}\label{eq:norm-valuation-bound}
 v_0\!\left(
 N_{K/\mathbb Q(\boldsymbol t)}(P(\boldsymbol\alpha))
 \right)\leq C_3\sigma(P).
\end{equation}
In the decomposition $B\simeq\cO_L\times B^{\prime}$ of
\eqref{eq:etale-factor},
projection to the first factor is $\iota$.
Since $P(\boldsymbol\alpha)\in R'$, multiplication by its image on
each factor preserves an integral finite free module. The complementary
summand $B^{\prime}$ is finite projective, hence free over the local ring
$\cF^\circ$.  Its determinant therefore has
nonnegative valuation, and
$v(\iota(P(\boldsymbol\alpha)))\geq0$.  If $f=[L:\cF]$, identity \eqref{eq:unramified-norm} gives
$v_0\bigl(N_{L/\cF}(\iota(P(\boldsymbol\alpha)))\bigr) =f\,v\bigl(\iota(P(\boldsymbol\alpha))\bigr)$.
We now take determinants on the product $B$ and use
\eqref{eq:norm-valuation-bound} to obtain
\[
 v\bigl(\iota(P(\boldsymbol\alpha))\bigr)
 \leq f\,v\bigl(\iota(P(\boldsymbol\alpha))\bigr)
 \leq v_0\!\left(
 N_{K/\mathbb Q(\boldsymbol t)}(P(\boldsymbol\alpha))
 \right)
 \leq C_3\sigma(P).
\]
If $\xi=P(\boldsymbol\alpha)/Q(\boldsymbol\alpha)$, then
$v(\iota(Q(\boldsymbol\alpha)))\geq0$, and hence
$v(\iota(\xi)) =v(\iota(P(\boldsymbol\alpha))) -v(\iota(Q(\boldsymbol\alpha))) \leq C_3\sigma(P)$.
We take the infimum over all presentations of $\xi$ and enlarge the constant
to absorb the positive minimum of $\sigma$ proves
\eqref{eq:controlled-valuation}.
\end{proof}

\section{\texorpdfstring{$p$}{p}-adic interpolation and cocycle complexity}
\label{sec:local}

\subsection{Residue tubes over a nondiscrete valuation ring}

\begin{definition}[Fibers, reduction, and residue tubes]\label{def:residue-tubes}
Retain $O=\mathcal K^\circ$, $\mathfrak m=\mathcal K^{\circ\circ}$,
and $\kappa=O/\mathfrak m$. For an $O$-scheme $\mathcal X$, its generic
and special fibers are $\mathcal X_{\mathcal K}$ and $\mathcal X_\kappa$.
Reduction of an $O$-point is its restriction to
$\operatorname{Spec}\kappa$; a tilde denotes reduction of points and maps.
The \emph{residue tube} of $\zeta\in\mathcal X(\kappa)$ is the set of
$O$-points reducing to $\zeta$.
\end{definition}

Lemma~\ref{lem:integral-tube} constructs coordinates on residue tubes
directly from a smooth presentation, without assuming that $O$ is Noetherian.
In its statement and proof, $W=(W_1,\ldots,W_s)$ denotes formal variables
and $w=(w_1,\ldots,w_s)$ their values. Thus $O[[W]]$ has the meaning fixed in
Definition~\ref{def:series-algebras}, and $(W)O[[W]]^r$ denotes $r$-tuples
with zero constant term. A common finite coefficient bound is uniform over
all coefficient indices.

\begin{lemma}[Integral analytic coordinates on a residue tube]
\label{lem:integral-tube}
Let $\mathcal K$ be a complete rank-one nonarchimedean field, let
$\mathcal X$ be smooth over $O$, and let $x\in\mathcal X(O)$ have reduction
$\zeta\in\mathcal X(\kappa)$.  There are an affine open neighborhood
$U$ of $\zeta$, an integer $s\geq0$, and a bijection
\begin{equation}\label{eq:residue-coordinates}
 \psi:\mathfrak m^s\longrightarrow
 \{y\in\mathcal X(O):\widetilde y=\zeta\},\qquad \psi(0)=x,
\end{equation}
with the following properties.
Every section in the target of \eqref{eq:residue-coordinates} factors through $U$.  Every regular function
$h\in\Gamma(U,\mathcal O_U)$ pulls back to a power series in $O[[W_1,
\ldots,W_s]]$, convergent on $\mathfrak m^s$.  If $U_0\subseteq U$ is
any open neighborhood of $\zeta$, the same assertion holds for functions
regular on $U_0$.  A function regular on $U_{\mathcal K}$ has a power-series
expansion whose coefficients have a common finite bound.

If $H:\mathcal X\to\mathcal X$ is an $O$-morphism fixing $\zeta$ on the
special fiber, its action on this tube has an expansion
\begin{equation}\label{eq:tube-expansion}
 \psi^{-1}H\psi(W)=e+AW+\sum_{|I|\geq2}c_IW^I,
 \qquad e\in\mathfrak m^s,\quad A\in M_s(O),\quad c_I\in O^s.
\end{equation}
If the reduction of $H$ is \'etale at $\zeta$, then $A\in\mathrm{GL}_s(O)$.
\end{lemma}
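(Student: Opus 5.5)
We will reduce to a standard local computation using the Jacobian presentation of Proposition~\ref{prop:jacobian-input}, followed by a Hensel/Newton-type construction over the complete rank-one field $\mathcal K$. Since $\mathcal X$ is smooth over $O$, we choose an affine open $U=\operatorname{Spec}O[Y_1,\ldots,Y_{s+t}]_{g}/(f_1,\ldots,f_t)$ containing $\zeta$. In this presentation the minor $\det(\partial f_j/\partial Y_{s+i})_{i,j\le t}$ is invertible on $U$. Every $O$-point reducing to $\zeta$ factors through $U$, because $\operatorname{Spec}O$ is local and its closed point maps into $U$; open sets containing the image of the closed point contain the whole section. After translating by the coordinates of $x$ (which lie in $O$), we may assume $x$ corresponds to the origin. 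Then $g(0)$ is a unit and the Jacobian minor $J(0)$ lies in $O^\times$.

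We define $\psi$ by sending $w\in\mathfrak m^s$ to the point with first coordinates $w$ and last coordinates $\eta(w)$. Here $\eta(w)\in\mathfrak m^t$ is the unique solution of $f(w,\eta)=0$. We will construct $\eta$ first as a formal series $\eta(W)\in(W)O[[W]]^t$ via the formal implicit function theorem: Newton iteration with the invertible integral Jacobian $J(0)$ keeps all coefficients in $O$. We will then check convergence on $\mathfrak m^s$. Since the coefficients are bounded in $O$ and $\|w\|<1$, the terms of degree $n$ are bounded by $\|w\|^n\to0$. Convergence therefore follows without discreteness.

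For uniqueness and surjectivity, we argue as follows. Any $O$-point $y$ reducing to $\zeta$ has all coordinates in $\mathfrak m$ after translation. Its last coordinates solve $f(w,\cdot)=0$ within $\mathfrak m^t$. The multivariable Hensel contraction, $\eta\mapsto\eta-J(0)^{-1}f(w,\eta)$, is a strict contraction on $\mathfrak m^t$ in the max norm, so the solution is unique and equals $\eta(w)$. Hence $\psi$ is a bijection, and $\psi(0)=x$.

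Now take a regular function $h$ on $U$. It is a polynomial in $Y$ divided by a power of $g$. Since $g(\psi(w))$ is a unit series, $1/g$ pulls back to an $O$-series, and $h$ pulls back to an element of $O[[W]]$. For $U_0\subseteq U$, we cover $U_0$ near $\zeta$ by some $D(g')$ with $g'(\zeta)\ne0$, and the same argument applies. A function regular on $U_{\mathcal K}$ equals $\lambda^{-1}$ times an integral function for some $\lambda\in O\setminus\{0\}$, which gives the common coefficient bound.

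For \eqref{eq:tube-expansion}, $H$ fixes $\zeta$, so it maps the tube to itself. Composing $H^*$ of the coordinates $Y_1,\ldots,Y_s$ with $\psi$ gives $O$-series by the preceding step, and their constant term $e$ lies in $\mathfrak m^s$. The linear part $A$ reduces to the differential of $\widetilde H$ on the tangent space at $\zeta$. That tangent space is identified with $\kappa^s$ by the projection onto the first $s$ coordinates, since the minor is invertible. If $\widetilde H$ is étale at $\zeta$, then $\bar A$ is invertible, so $\det A\in O^\times$ and $A\in\operatorname{GL}_s(O)$.

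The main obstacle is the formal and convergent implicit function step over a non-Noetherian, nondiscrete $O$. The plan is to obtain integrality of the series coefficients purely algebraically, from invertibility of $J(0)$ in $O$, and convergence from $\|w\|<1$ together with coefficient boundedness. This avoids any use of discreteness or compactness.
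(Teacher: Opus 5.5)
Your proposal is correct and follows the paper's proof essentially step for step. The shared steps are the Jacobian presentation, a recursive construction of the integral implicit series (your Newton iteration, the paper's degree-by-degree solution of $J\Phi_d=-R_d$), convergence from $\|w\|<1$, and uniqueness via $f(w,\eta)-f(w,\eta')=(J(0)+E)(\eta-\eta')$ with $E\in M_t(\mathfrak m)$, which is exactly what your contraction step uses; the treatments of $1/g$, $U_0$, $U_{\mathcal K}$, and the linear part of $H$ also coincide. One phrasing point: on the open polydisc, the map $\Psi(\eta)=\eta-J(0)^{-1}f(w,\eta)$ only satisfies the pointwise bound $\|\Psi(\eta)-\Psi(\eta')\|\le\max\{\|w\|,\|\eta\|,\|\eta'\|\}\,\|\eta-\eta'\|$, with no uniform constant below $1$ when the value group is dense; this is enough because you use it only for uniqueness, and existence comes from the series.
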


\begin{proof}
Proposition~\ref{prop:jacobian-input} supplies a local Jacobian
presentation over the arbitrary base ring $O$; see
\cite[Corollary~17.12.2]{EGAIV4}. It gives an affine neighborhood
$U=\operatorname{Spec}B$ of $\zeta$ with a presentation
\begin{equation}\label{eq:smooth-local-presentation}
 B=
 \left(
 O[T_1,\ldots,T_s,Y_1,\ldots,Y_r]/
 (F_1,\ldots,F_r)
 \right)_g
\end{equation}
for a suitable element $g$, such that
$\det\left( \frac{\partial F_i}{\partial Y_j} \right)_{1\leq i,j\leq r} \in B^\times$.
Here $s$ is the relative dimension at $\zeta$; we have shrunk
$U$ so that this dimension is constant.
Every section with reduction $\zeta$ factors through $U$:
the inverse image of $U$ is an open subset of $\operatorname{Spec}O$
containing its closed point and therefore is all of
$\operatorname{Spec}O$.  We write $x=(a,b)$ in these coordinates, with
$a=(a_1,\ldots,a_s)$ and $b=(b_1,\ldots,b_r)$, and put
$J=\left(\frac{\partial F_i}{\partial Y_j}(a,b)\right)_{i,j}$.
Then $J\in\mathrm{GL}_r(O)$.

We write $T=(T_1,\ldots,T_s)$, $Y=(Y_1,\ldots,Y_r)$, and
$F=(F_1,\ldots,F_r)$ for the coordinate and equation tuples in
\eqref{eq:smooth-local-presentation}, and consider the formal equation
\begin{equation}\label{eq:formal-tube-equations}
 F(a+W,b+\Phi(W))=0.
\end{equation}
A solution $\Phi(W)\in(W)O[[W]]^r$ is constructed recursively.
Expanding the left side gives its linear part $LW+J\Phi$ and terms of
total degree at least two in $(W,\Phi)$; here $L$ is the matrix of
partial derivatives of the equation tuple with respect to the
$T$-variables, evaluated at $(a,b)$.  Consequently, once the
homogeneous parts of $\Phi$ of degrees less than $d$ have been determined,
the part of degree $d$ is the unique solution of an equation
$J\Phi_d=-R_d$, where $R_d$ is already known.  Since $J^{-1}$ has entries
in $O$, induction constructs all coefficients in $O$ and proves formal
uniqueness.  The case $r=0$ simply omits these equations.

For $w\in\mathfrak m^s$, we set $\rho=\|w\|<1$ if $s>0$.
Each homogeneous term of degree $d$ in $\Phi(w)$ has norm at most
$\rho^d$.  Completeness therefore gives convergence and
$\Phi(w)\in\mathfrak m^r$.  Evaluation of
\eqref{eq:formal-tube-equations} shows that
$(a+w,b+\Phi(w))$ is a section with reduction $\zeta$; any localized
denominator stays a unit because its reduction is unchanged.

To prove uniqueness, suppose that $y,y'\in\mathfrak m^r$ are two solutions
of $F(a+w,b+y)=0$ with the same $w$.  Taking the difference of the two
polynomial evaluations gives
\[
 F(a+w,b+y)-F(a+w,b+y')=(J+E)(y-y'),
 \qquad E\in M_r(\mathfrak m).
\]
Indeed, every coefficient of $E$ is a sum of terms each containing at
least one coordinate of $w$, $y$, or $y'$.  The reduction of $J+E$ is the
invertible reduction of $J$, so $J+E$ is invertible over the local ring
$O$. Multiplication by $(J+E)^{-1}$ gives $y-y'=0$, hence $y=y'$.
This establishes the bijection in \eqref{eq:residue-coordinates}; its inverse
is the coordinate vector $T-a$. When $s=0$, the residue tube consists of a
single section.

Substituting $(a+W,b+\Phi(W))$ into an element of $B$ gives an integral
formal power series.  If a principal neighborhood $D(g)\subseteq U$
contains $\zeta$, then $g(x)\in O^\times$; the substituted series for
$g$ has unit constant term and hence its reciprocal also lies in
$O[[W]]$.  Evaluation of that reciprocal is valid throughout
$\mathfrak m^s$, since the nonconstant part has norm less than one
after division by its constant term.  Every open neighborhood of
$\zeta$ contains such a $D(g)$, proving the assertion for $U_0$.
Finally, $\Gamma(U_{\mathcal K},\mathcal O)=B\otimes_O\mathcal K$.
Multiplication by one nonzero element of $O$ makes any given element of
this ring integral, which proves the bounded-coefficient assertion.

We apply this construction to the components of $(T-a)\circ H$ on
$U\cap H^{-1}(U)$, an open neighborhood of $\zeta$.  Their constant
term is the coordinate vector $e$ of $H(x)$ and lies in $\mathfrak m^s$.
The resulting series is the expansion \eqref{eq:tube-expansion}. Its
linear coefficient matrix, reduced modulo $\mathfrak m$, represents the
tangent map of
$\widetilde H$ at $\zeta$ in the coordinates $T-a$ on the smooth
special fiber.  An \'etale tangent map is invertible, so the reduction of
$A$ is invertible and hence $A\in\mathrm{GL}_s(O)$.
\end{proof}

\subsection{Interpolation and exact return sets}

\begin{definition}[Analytic parametrizations]\label{def:analytic-parametrization}
A \emph{restricted analytic parametrization} of an orbit subsequence means
that, in local coordinates, its points at nonnegative integer indices
are the values of restricted power series in one variable.
\end{definition}

In Lemma~\ref{lem:local-interpolation}, we construct these parametrizations
and apply their zero sets to return times for a closed target.

\begin{lemma}[Interpolation along an \'etale reduced orbit]
\label{lem:local-interpolation}
Let $\mathcal K$ be a complete rank-one nonarchimedean field with
$v(p)=1$ and finite residue field of characteristic $p>0$.  Let
$\mathcal X$ be smooth and separated of finite type over $O$, let
$f:\mathcal X\to\mathcal X$ be an $O$-morphism, and let
$x\in\mathcal X(O)$.  Suppose that $\widetilde f$ is \'etale at every
point of the reduced forward orbit of $x$.  Then this orbit, after
discarding a finite initial segment and splitting the remaining indices
into finitely many arithmetic progressions, has one restricted analytic
parametrization (Definition~\ref{def:analytic-parametrization}) on each progression.  Consequently, for every closed
subvariety $V\subseteq\mathcal X_{\mathcal K}$, the return set
$\{n\geq0:f^n(x)\in V\}$
is a finite union of infinite arithmetic progressions and a finite set.
\end{lemma}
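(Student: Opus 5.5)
The argument combines three ingredients already set up above: finiteness of the residue field, the tube coordinates of Lemma~\ref{lem:integral-tube}, and Poonen's interpolation (Theorem~\ref{thm:poonen} with Remark~\ref{rem:poonen-iteration}). The conclusion about return sets then follows from Strassmann's theorem (Theorem~\ref{thm:strassmann}).

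\emph{Step 1: the reduced orbit.} The reduced orbit $\widetilde{f}^n(\widetilde x)$ lies in $\mathcal X_\kappa(\kappa)$. Since $\mathcal X$ is of finite type and $\kappa$ is finite, this set is finite, so the reduced orbit is preperiodic. Choose $n_0$ and $b\geq1$ with $\widetilde f^{\,n_0+b}(\widetilde x)=\widetilde f^{\,n_0}(\widetilde x)$, and let $\zeta_j=\widetilde f^{\,n_0+j}(\widetilde x)$ for $0\leq j<b$. Put $g=f^b$. Then $\widetilde g$ fixes each $\zeta_j$, and it is \'etale at $\zeta_j$, being a composite of maps \'etale at the successive points of the reduced orbit. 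Replacing $x$ by the integral point $x_j=f^{n_0+j}(x)$ reduces us to a single progression: the orbit of $x_j$ under $g$, whose every point stays in the residue tube of $\zeta_j$.

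\emph{Step 2: coordinates and interpolation.} Apply Lemma~\ref{lem:integral-tube} at $x_j$ with $H=g$. This gives a bijection $\psi:\mathfrak m^s\to$ tube of $\zeta_j$, with $\psi(0)=x_j$, and the expansion \eqref{eq:tube-expansion}, where $A\in\mathrm{GL}_s(O)$ because $\widetilde g$ is \'etale at $\zeta_j$. The series $G=\psi^{-1}g\psi$ has integral coefficients, but it is defined only on $\mathfrak m^s$, not on $O^s$. This rescaling is the main obstacle, and I would handle it as follows.
\begin{itemize}
\item Choose $\lambda\in\mathcal K^\times$ with $0<v(\lambda)$ small, say less than $1/(p-1)$ and less than the valuation of $e$ if $e\neq0$. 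Conjugate by $W=\lambda X$ to get $\lambda^{-1}G(\lambda X)$.
\item Its constant term $\lambda^{-1}e$ and its terms of degree $\geq2$ (coefficients $\lambda^{|I|-1}c_I$) have positive valuation. The coefficients $\lambda^{|I|-1}c_I$ tend to $0$, so the rescaled map lies in $O\langle X\rangle^s$.
\item Its reduction is therefore the linear map $\bar A$, an automorphism of $\kappa^s$. Because $\kappa$ is finite, $\bar A$ has finite order $t$.
\item Passing to $g^t$ makes the reduction the identity. By Remark~\ref{rem:poonen-iteration}, a further $p$-power iterate $g^{tp^k}$ satisfies \eqref{eq:poonen-congruence}.
\end{itemize}
Theorem~\ref{thm:poonen} then gives restricted series $\mathcal G(X,n)$ interpolating $n\mapsto g^{tp^kn}$. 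For each residue $i$ modulo $tp^k$, the points $f^{\,n_0+j+b(i+tp^kn)}(x)$ have coordinates $\lambda\mathcal G(X_i,n)$, where $X_i$ is the rescaled coordinate of $g^i(x_j)$. Since $\psi$ is a bijection onto the tube, this is the required restricted analytic parametrization on that arithmetic progression. There are finitely many progressions, and the initial segment of length $n_0$ is discarded.

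\emph{Step 3: return sets.} Let $V\subseteq\mathcal X_{\mathcal K}$ be closed. Cover $V$ by affine opens and cut it out by finitely many equations $h_1,\dots,h_t$, regular on an open set containing the tube's generic points; shrink $U$ if necessary, using the $U_0$ clause of Lemma~\ref{lem:integral-tube}. Each $h_\ell\circ\psi$ is a power series with bounded coefficients, convergent on $\mathfrak m^s$. Composing with $\lambda\mathcal G(X_i,n)$, whose values have norm at most $|\lambda|<1$, gives a restricted series in $n$ after scaling by a constant, since $h_\ell\circ\psi(\lambda X)$ converges on $O^s$.

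On each progression, the return set is the set of common nonnegative-integer zeros of these one-variable series $\phi_\ell(n)$. If every $\phi_\ell$ vanishes identically, the whole progression lies in the return set. Otherwise some $\phi_\ell$ is a nonzero restricted series, and by Theorem~\ref{thm:strassmann} it has only finitely many zeros. Taking the union over the finitely many progressions, together with the discarded initial segment, gives the claimed form: finitely many infinite arithmetic progressions plus a finite set. Separatedness is used so that membership $f^n(x)\in V$ can be tested inside the chosen affine chart of the tube.
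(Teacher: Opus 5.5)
Your outline follows the paper's proof step for step: finiteness of $\mathcal X(\kappa)$, tube coordinates from Lemma~\ref{lem:integral-tube}, rescaling, finite order of the reduction, Poonen, and then Strassmann. However, the rescaling step as you wrote it fails when the value group of $\mathcal K$ is discrete.

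\textbf{The gap.} You require $\lambda\in\mathcal K^\times$ with $0<v(\lambda)<\min\{v(e),1/(p-1)\}$. For $\mathcal K=\mathbb Q_p$, or any unramified extension of it, every nonzero nonunit has $v\geq1\geq1/(p-1)$. So no such $\lambda$ exists. Such fields are exactly what Proposition~\ref{prop:controlled-place} produces when $K$ is a number field. Dropping the condition $v(\lambda)<1/(p-1)$, which you never use, does not help. There is still no $\lambda$ with $0<v(\lambda)<v(e)$ when $v(e)$ is the least positive value, e.g.\ $e=(p,0,\ldots,0)$. The lemma is stated for arbitrary complete rank-one fields, discrete or not, so this case must be covered.

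\textbf{The repair.} Do what the paper does and take $\lambda$ to be a component of $e$ of largest absolute value. Then $v(\lambda)=v(e)$ and $e\in\lambda O^s$. The polydisc $\lambda O^s$ is still $G$-stable: $A$ preserves it, and $|c_IW^I|\leq|\lambda|^{|I|}\leq|\lambda|$ for $|I|\geq2$. The rescaled map $\lambda^{-1}e+AZ+\sum_{|I|\geq2}c_I\lambda^{|I|-1}Z^I$ still lies in $O\langle Z\rangle^s$. Its reduction, however, is the affine map $Z\mapsto\overline{\lambda^{-1}e}+\bar AZ$ rather than $\bar A$. The group of affine automorphisms of $\kappa^s$ is finite, so this map still has finite order. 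After that, Poonen's criterion, the interpolation, and Strassmann go through exactly as you describe; when $e=0$ the subsequence is constant. Alternatively, you could pass to a finite totally ramified extension of $\mathcal K$ to make room for your $\lambda$, but the affine version is simpler.

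\textbf{Smaller point in Step 3.} The clean choice is generators of the ideal of $V\cap U_{\mathcal K}$ in $\Gamma(U_{\mathcal K},\mathcal O)$, combined with the bounded-coefficient clause of Lemma~\ref{lem:integral-tube}. The $U_0$ clause concerns integral functions on opens of the model, not equations on the generic fiber. Separatedness plays no role in testing membership in $V$ on that chart.
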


\begin{proof}
The set $\mathcal X(\kappa)$ is finite: a finite affine cover reduces
this assertion to finitely many choices for a finite list of coordinates
over the finite field $\kappa$.  The reduced orbit is therefore
eventually periodic.  We choose $a\geq0$ and $m\geq1$ so that
$\widetilde f^{a+b+jm}(\widetilde x)$ is independent of $j\geq0$ for
each $0\leq b<m$.  We fix $b$, put $x'=f^{a+b}(x)$ and $H=f^m$, and let
$\zeta=\widetilde{x'}$.  The map $\widetilde H$ fixes $\zeta$ and is
\'etale there, because it is a composite of maps \'etale at the
successive points of that reduced cycle.

By Lemma~\ref{lem:integral-tube}, the action of $H$ on the residue tube
of Definition~\ref{def:residue-tubes} has coordinates
\begin{equation}\label{eq:local-orbit-series}
 G(W)=e+AW+\sum_{|I|\geq2}c_IW^I,
 \qquad e\in\mathfrak m^s,\quad A\in\mathrm{GL}_s(O),\quad c_I\in O^s,
\end{equation}
and $x'$ has coordinate $0$.  If $e=0$, then $H(x')=x'$, so the corresponding subsequence is
constant and its return set to $V$ is either empty or the entire subsequence.
This case includes $s=0$.

Otherwise, we choose a nonzero component $\lambda$ of $e$ having largest
absolute value.  Then $0<|\lambda|<1$ and $e\in\lambda O^s$.  For
$W\in\lambda O^s$ the integral coefficients give
$G(W)\in\lambda O^s$.  In coordinates $W=\lambda Z$, with $Z=(Z_1,\ldots,Z_s)$, this
invariant closed polydisc has the rescaled map from
\eqref{eq:local-orbit-series}
\begin{equation}\label{eq:rescaled-orbit}
 g(Z)=\lambda^{-1}e+AZ+
      \sum_{|I|\geq2}c_I\lambda^{|I|-1}Z^I\in O\langle Z\rangle^s.
\end{equation}
Membership in \eqref{eq:rescaled-orbit} follows because the coefficients of degree $d$
have norm at most $|\lambda|^{d-1}$ for $d\geq2$. Its reduction is the
affine-linear automorphism
$\bar g(Z)=\overline{\lambda^{-1}e}+\bar A Z$ over $\kappa$.
The group of affine-linear automorphisms over $\kappa$ is finite, so
$\bar g$ has finite order $r$. Thus $g^r$ reduces to the identity as
a polynomial map.

Every coefficient of $g^r(Z)-Z$ has positive valuation. Because this
difference is a restricted series, its nonzero coefficient norms tend to
zero and therefore attain a maximum strictly smaller than one. Hence there
is $c>0$ that bounds all coefficient valuations from below, even when the
value group is dense. If $g^r(Z)=Z$, set $e_0=0$; then the zero series belongs
to $p^{c'}O\langle Z\rangle^s$ for every $c'>1/(p-1)$. If $g^r(Z)\ne Z$,
Poonen's iteration criterion (Remark~\ref{rem:poonen-iteration};
\cite[Remark~4]{PoonenInterpolation}) gives $e_0\geq0$ such that
\begin{equation}\label{eq:poonen-iterate}
 Q=g^{rp^{e_0}},\qquad
 Q(Z)-Z\in p^{c'}O\langle Z\rangle^s,\qquad c'>\frac1{p-1}.
\end{equation}
As in Definition~\ref{def:coefficient-congruences}, the notation in
\eqref{eq:poonen-iterate} imposes a bound on
coefficient valuations and does not require an element $p^{c'}$ to
belong to $\mathcal K$. Poonen's theorem
(Theorem~\ref{thm:poonen}; \cite[Theorem~1]{PoonenInterpolation}), valid over any complete valued
field with $v(p)=1$, supplies
\begin{equation}\label{eq:local-interpolant}
 \Theta(Z,N)\in O\langle Z,N\rangle^s,\qquad
 \Theta(Z,n)=Q^n(Z)\quad(n\geq0).
\end{equation}
We write $q=rp^{e_0}$.  For $0\leq\ell<q$, the point
$z_\ell=g^\ell(0)$ lies in $O^s$, and the series
$\lambda\Theta(z_\ell,N)$ from \eqref{eq:local-interpolant} parametrizes, in tube coordinates, the
subsequence $H^{\ell+qn}(x')$.  Its values lie in $\lambda O^s$ for
all $N\in O$.

We choose generators $h_1,\ldots,h_t$ of the ideal of $V\cap U_{\mathcal K}$
in the finite-type $\mathcal K$-algebra of the affine chart provided by
Lemma~\ref{lem:integral-tube}.  Their bounded expansions become
restricted series after the substitution $W=\lambda Z$, namely
$h_i(\psi(\lambda Z))\in\mathcal K\langle Z\rangle$.
Substitution $Z=\Theta(z_\ell,N)$ gives a series in
$\mathcal K\langle N\rangle$.  To see that the substitution converges,
the coefficients of the outer series tend to zero while each monomial
in the integral inner series has Gauss norm at most one; hence the
substituted tails tend to zero in the Gauss norm.

By Theorem~\ref{thm:strassmann}, each resulting one-variable series
either vanishes identically or has finitely many zeros in $O$.  If all
the compositions for a particular $\ell$ vanish identically, every
nonnegative integer on that subsequence gives a return to $V$.
Otherwise the zeros of one nonzero composition contain every return
on the subsequence and are finite.  We take the finite union over
$b$ and $\ell$ and then restore the first $a$ indices, which proves
the claim.
\end{proof}

\subsection{Projective cocycle growth}

The following estimate controls the presentation complexity of the
projective direction in the higher-rank fiber.

\begin{definition}[Projective presentation growth]\label{def:presentation-growth}
Use the complexity of Definition~\ref{def:presentation-complexity}
and the asymptotic notation of Definition~\ref{def:asymptotic-growth}.
If $X\hookrightarrow\Pj^N_K$ is a locally closed embedding, we say that a
sequence $x_n\in X(K)$ has \emph{$R$-presentation growth $O(b_n)$} if it has
homogeneous representatives $\boldsymbol x_n$, meaning nonzero
coordinate vectors with projective class $x_n$, satisfying
$\comp_R(\boldsymbol x_n)=O(b_n)$.
The embedding is always specified. Define \emph{$R$-presentation
growth $o(b_n)$} analogously, replacing $O(b_n)$ by $o(b_n)$.
\end{definition}
Lemma~\ref{lem:presentation-calculus}
shows that changing the finite presentation of $K$, or applying a fixed
rational expression defined along the sequence, changes the complexity
bound only by fixed multiplicative and additive constants.

For an affine-linear map $T$, the descending sequence of affine
subspaces $T^n(\A^m)$ stabilizes: every strict inclusion lowers the
dimension. On its stabilized image $S$, the restriction $T|_S$ is a
surjective affine-linear self-map and hence an affine automorphism.
After discarding finitely many iterates, every base orbit lies in $S$.

The \emph{projective-linear cocycle} in \eqref{eq:projective-cocycle}
records the base point
and the fiber direction updated by $M(u)$. The \emph{standard product
embedding} is $u\mapsto[1:u]$ followed by the Segre embedding, whose
coordinates are pairwise products of the homogeneous coordinates
of the two factors. The notation $\deg(T^q)$ always refers to the
degree of the composed map, not to $(\deg T)^q$.

\begin{lemma}[The projective-linear cocycle]\label{lem:cocycle}
Let $T:\A^m\to\A^m$ be an \'{e}tale polynomial endomorphism, let
$M(u)\in\operatorname{GL}_r
(K[u_1,\ldots,u_m])$, where $K$ is a finitely generated field of
characteristic zero, and put $D=\max\{1,\deg T\}$.  When $m=0$, set
$D=1$ and interpret the base as a point.  Then
\begin{equation}\label{eq:projective-cocycle}
 \widehat T:\A^m\times\Pj^{r-1}\longrightarrow
 \A^m\times\Pj^{r-1},
 \qquad
 \widehat T(u,[\boldsymbol z])=
 \bigl(T(u),[M(u)\boldsymbol z]\bigr),
\end{equation}
is \'{e}tale.  For every $K$-rational initial point, there is a finitely
generated $\mathbb Z$-domain $R\subseteq K$ with fraction field $K$ such that,
for the standard product embedding, the corresponding orbit has
$R$-presentation growth
\begin{equation}\label{eq:cocycle-growth}
 \begin{cases}
 O\bigl(n^2\log(n+2)\bigr),&D=1,\\
 O(nD^n),&D\geq2.
 \end{cases}
\end{equation}
If $T$ is an automorphism, then $\widehat T$ is an automorphism.
\end{lemma}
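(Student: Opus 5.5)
The plan is to treat the three assertions separately: the geometric ones (\'etaleness and the automorphism property) are formal, and the substance is the presentation-growth estimate \eqref{eq:cocycle-growth}.

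\emph{\'Etaleness and automorphism.} Since $M(u)\in\operatorname{GL}_r(K[u])$, the determinant $\det M(u)$ is a nonzero constant. Hence for every $u$ the fiber map $[\boldsymbol z]\mapsto[M(u)\boldsymbol z]$ is a projective-linear automorphism of $\Pj^{r-1}$, and $\widehat T$ is a morphism on all of $\A^m\times\Pj^{r-1}$. On a standard affine chart of $\Pj^{r-1}$ at the source, paired with a chart containing the image point, the Jacobian of $\widehat T$ is block lower triangular. One diagonal block is $D_uT$, which is invertible because $T$ is \'etale. The other is the differential of a projective-linear automorphism, which is also invertible. So $\widehat T$ is \'etale. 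If $T$ is an automorphism, I will check directly that
\[
(u,[\boldsymbol z])\mapsto\bigl(T^{-1}(u),[M(T^{-1}(u))^{-1}\boldsymbol z]\bigr)
\]
is a two-sided inverse, and that it is a morphism because $M^{-1}$ has polynomial entries.

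\emph{Choice of $R$ and representatives.} Let the initial point be $(u_0,[\boldsymbol z_0])$, with $\boldsymbol z_0\in K^r\setminus\{0\}$. I take $R=\mathbb Z[\boldsymbol\alpha]$, where $\boldsymbol\alpha$ consists of the coefficients of $T$ and of the entries of $M$, the coordinates of $u_0$ and $\boldsymbol z_0$, and finitely many further generators of $K$ so that $\operatorname{Frac}(R)=K$. Then $T$ and $M$ become polynomials in $u$ whose coefficients are fixed integer polynomials in $\boldsymbol X$, evaluated at $\boldsymbol\alpha$, of degree at most some $a$. The representatives are
\[
u_n=T^n(u_0),\qquad \boldsymbol z_n=M(u_{n-1})\cdots M(u_0)\boldsymbol z_0 .
\]
Here $\boldsymbol z_n\neq0$ because each $M(u_j)$ is invertible. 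Every coordinate is $P(\boldsymbol\alpha)$ for an explicit $P\in\mathbb Z[\boldsymbol X]$, so I can use denominator $Q=1$ in \eqref{eq:presentation-complexity}. The Segre coordinates of $([1:u_n],[\boldsymbol z_n])$ are products of one coordinate from each factor. Their degrees and logarithmic $\ell^1$-norms are at most the sums of those of the factors, by submultiplicativity.

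\emph{Recursive bounds.} Let $N_n$ be the maximal degree and $L_n$ the maximal $\log(1+\lVert\cdot\rVert_1)$ of the polynomials representing the coordinates of $u_n$. Substituting into a degree-$D$ polynomial with fixed coefficient polynomials, and using submultiplicativity of $\lVert\cdot\rVert_1$, gives
\[
N_{n+1}\le DN_n+a,\qquad L_{n+1}\le DL_n+c,
\]
with $c$ independent of $n$; here $c$ absorbs the number of monomials and the coefficient norms of $T$. Hence $N_n,L_n=O(D^n)$ when $D\ge2$, and $N_n,L_n=O(n)$ when $D=1$. For $\boldsymbol z_n$, each factor $M(u_j)$ contributes degree at most $e N_j+a$ and log-norm at most $eL_j+c'$, where $e$ bounds the $u$-degree of the entries of $M$. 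A product of $n$ matrices also picks up at most $n\log r$ from summing over matrix indices. Summing over $j<n$ gives degree and log-norm $O(D^n)$ when $D\ge2$, and $O(n^2)$ when $D=1$. Inserting these into $\sigma$ from \eqref{eq:polynomial-size}, the term $(1+\deg)\log(2+\deg)$ is $O(D^n\cdot n\log D)=O(nD^n)$ when $D\ge2$, and $O(n^2\log(n+2))$ when $D=1$. This gives \eqref{eq:cocycle-growth}.

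The step I expect to be the main obstacle is this bookkeeping: the coefficient-norm recursion must be uniform, meaning that the expansion of a $D$-th power of an integer polynomial in $\boldsymbol X$ only multiplies $\ell^1$-norms. The constants must not depend on $n$, and the case $m=0$ has to be handled as well. That case is trivial, with $u_n$ a point and $\boldsymbol z_n=M^n\boldsymbol z_0$, and it is covered by $D=1$.
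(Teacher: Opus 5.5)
Your proposal is correct and follows essentially the paper's proof. You use the same choice of $R$ containing the coefficients and initial data, the same integral representatives $u_n=T^n(u_0)$ and $\boldsymbol z_{n+1}=M(u_n)\boldsymbol z_n$, the same linear recursions for degree and logarithmic $\ell^1$-norm (giving $O(n)$ or $O(D^n)$ on the base and, after summing, $O(n^2)$ or $O(D^n)$ on the fiber), and the same insertion into $\sigma$ through the Segre embedding. The only cosmetic difference is that you check \'{e}taleness with a block-triangular Jacobian, whereas the paper factors $\widehat T$ as a fiberwise projective-linear automorphism over $\A^m$ followed by the base change of $T$.
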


\begin{proof}
We factor $\widehat T$ as
\begin{equation}\label{eq:cocycle-factorization}
 (u,[\boldsymbol z])\longmapsto
 (u,[M(u)\boldsymbol z])\longmapsto
 (T(u),[M(u)\boldsymbol z]).
\end{equation}
The first arrow in \eqref{eq:cocycle-factorization} is an automorphism over $\A^m$, with inverse induced by
$M(u)^{-1}$, and the second is the base change of $T$.  Hence $\widehat T$ is
\'{e}tale.  If $T$ is an automorphism, the inverse is explicitly
$(u,[\boldsymbol z])\longmapsto \bigl(T^{-1}(u),[M(T^{-1}(u))^{-1}\boldsymbol z]\bigr)$.
It is regular because both $T^{-1}$ and $M^{-1}$ are polynomial.

For the presentation-growth estimate, we enlarge and localize a finitely
generated $\mathbb Z$-domain $R=\mathbb Z[\boldsymbol\alpha]$ so that the
coefficients
of $T$ and $M$, the affine coordinates of $u_0$, and a nonzero vector
representative $\boldsymbol z_0$ all lie in $R$.  We fix polynomial representatives in $\mathbb Z[\boldsymbol X]$ for
these finitely many elements and set
$u_n=T^n(u_0)$ and $\boldsymbol z_{n+1}=M(u_n)\boldsymbol z_n$.

For a tuple of polynomial representatives, we let $\delta$ be the largest
degree in $\boldsymbol X$ and let $\eta$ be the largest value of
$\log(1+\lVert\cdot\rVert_1)$.  We denote the corresponding quantities for
$u_n$ by $\delta_n,\eta_n$; when $m=0$, set both quantities equal to
zero and omit the empty base recurrence.  Estimating degrees and coefficient norms under substitution gives a
constant $C$ independent of $n$ such that
\begin{equation}\label{eq:base-complexity-recurrence}
 \delta_{n+1}\leq D\delta_n+C,
 \qquad
 \eta_{n+1}\leq D\eta_n+C.
\end{equation}
Indeed, each coordinate of $T$ is a fixed sum of monomials of degree at most
$D$; degrees multiply by at most $D$, while $\ell^1$-norms multiply under
products and add over this fixed finite sum.  Solving
\eqref{eq:base-complexity-recurrence} yields
\begin{equation}\label{eq:base-complexity-solution}
 (\delta_n,\eta_n)=
 \begin{cases}
 O(n),&D=1,\\
 O(D^n),&D\geq2.
 \end{cases}
\end{equation}

We let $\delta'_n,\eta'_n$ be the analogous quantities for
$\boldsymbol z_n$.  Evaluation of the fixed polynomial matrix $M$ at $u_n$
and multiplication of the resulting matrix by $\boldsymbol z_n$ give
\begin{equation}\label{eq:fiber-complexity-recurrence}
 \delta'_{n+1}\leq\delta'_n+C(\delta_n+1),
 \qquad
 \eta'_{n+1}\leq\eta'_n+C(\eta_n+1).
\end{equation}
Summing the bounds in \eqref{eq:fiber-complexity-recurrence} gives
\begin{equation}\label{eq:fiber-complexity-solution}
 (\delta'_n,\eta'_n)=
 \begin{cases}
 O(n^2),&D=1,\\
 O(D^n),&D\geq2.
 \end{cases}
\end{equation}
Finally, for a representative of degree $\delta$ and logarithmic
$\ell^1$-norm $\eta$, the definition \eqref{eq:polynomial-size} of $\sigma$ gives
$\sigma=O\bigl((1+\delta)\log(2+\delta)+\eta\bigr)$.
We use the affine representatives $[1:u_n]$ and the vector representatives
$\boldsymbol z_n$, followed by the Segre embedding of the product.  Equations
\eqref{eq:base-complexity-solution} and
\eqref{eq:fiber-complexity-solution} give
$O(n^2\log(n+2))$ when $D=1$ and $O(nD^n)$ when $D\geq2$, which is
\eqref{eq:cocycle-growth}.
\end{proof}

\section{A transfer theorem for superattracting line-bundle dynamics}
\label{sec:linebundle}

\subsection{Line-bundle maps and integral models}

We blow up the invariant zero section to replace the higher-rank fiber by
a line bundle over the projective directions. In Theorem~\ref{thm:line-bundle},
we prove a transfer result for this line-bundle dynamics under contraction,
critical absorption, and an orbitwise presentation-growth hypothesis; its
statement does not depend on the coordinate form \eqref{eq:intro-map}.

\begin{definition}[Line-bundle dynamics and the vertical differential]\label{def:line-bundle-dynamics}
Let $X$ be a smooth quasi-projective variety, let $\varphi:X\to X$ be an
\'{e}tale endomorphism, and let $\pi:L\to X$ be a line bundle with zero
section $Z_L$. An endomorphism $\Phi:L\to L$ lies over $\varphi$ if
$\pi\circ\Phi=\varphi\circ\pi$. Let $T_{L/X}$ denote the relative
tangent bundle. The vertical differential is the line-bundle morphism
$d_{\mathrm{vert}}\Phi:T_{L/X}\longrightarrow\Phi^*T_{L/X}$,
and $C_\Phi$ denotes its zero scheme. Locally in source and target
trivializations, this zero scheme is defined by the coefficient of the
derivative in the fiber coordinate. For a closed subscheme $W$, write
$\cI_W$ for its ideal sheaf; $\Phi^*\cI_W$ is the ideal generated by
pullbacks of its local sections, and $\cI_W^d$ is its $d$th ideal power.
The superattracting condition used below is the ideal inclusion
\eqref{eq:ideal-contraction}.
\end{definition}

For a homogeneous polynomial $h$, let $D_+(h)$ denote its nonvanishing
locus in projective space. In Lemma~\ref{lem:adapted-projective-charts}, a
cover is \emph{subordinate} to a prescribed cover if each of its members is
contained in a member of the prescribed cover. The notation $L|_U$ denotes
the restriction of $L$ to $U$.

\begin{lemma}[Projective charts adapted to a line bundle and a target]
\label{lem:adapted-projective-charts}
Let $K$ be a field, let $X\hookrightarrow\Pj^N_K$ be a locally closed
embedding of a quasi-projective variety, and let $\pi:L\to X$ be a line
bundle.  Let $V\subseteq L$ be closed.  Write $\overline X$ for the
projective closure of $X$ in this embedding.  There is a finite affine
open cover $X=\bigcup_{\alpha}U_\alpha$ with the following properties.
For each $\alpha$, there is a homogeneous polynomial $h_\alpha$ of
positive degree such that
$U_\alpha=\overline X\cap D_+(h_\alpha)\subseteq X$,
the line bundle is trivial on $U_\alpha$, and, in a chosen fiber
coordinate $t_\alpha$, the intersection $V\cap L|_{U_\alpha}$ is cut out
by finitely many equations
\begin{equation}\label{eq:target-equations}
 P_{\alpha,i}(x,t_\alpha)
   =\sum_{j=0}^{e_{\alpha,i}}a_{\alpha,i,j}(x)t_\alpha^j.
\end{equation}
Each nonzero coefficient has a fixed presentation
\begin{equation}\label{eq:target-coefficients}
 a_{\alpha,i,j}
   =\frac{A_{\alpha,i,j}}{h_\alpha^{b_{\alpha,i,j}}},
 \qquad
 \deg A_{\alpha,i,j}
   =b_{\alpha,i,j}\deg h_\alpha,
\end{equation}
where $A_{\alpha,i,j}$ is homogeneous and $b_{\alpha,i,j}\geq0$.
The cover may also be chosen subordinate to any prescribed finite open
cover, with the same conclusion for any prescribed finite collection of
regular functions on its members after restriction.
\end{lemma}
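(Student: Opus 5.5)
We build the cover in three stages: trivialize $L$, refine by principal open sets cut out by homogeneous forms, and then clear denominators in the target equations.

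\emph{Trivializing cover.} Since $L$ is a line bundle, $X$ has a finite open cover $\{W_\beta\}$ on which $L$ is trivial. Intersecting with the prescribed finite open cover and with the domains of the prescribed regular functions, we may assume each $W_\beta$ lies in a member of the prescribed cover and that each prescribed function is regular on the relevant $W_\beta$. Being trivial and carrying regular functions are both preserved under passage to open subsets, so these properties persist under every further refinement.

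\emph{Refinement by homogeneous forms.} Write $X=\overline X\setminus B$, where $B$ is closed in $\overline X$; this is possible because $X$ is locally closed, hence open in its closure. Each complement $\overline X\setminus W_\beta$ is closed in $\overline X$, as it is the union of $B$ and a closed subset of $X$ closed in $\overline X$ after taking closure. Let $I_\beta$ be the homogeneous ideal of $\overline X\setminus W_\beta$ in $K[x_0,\ldots,x_N]$. For $x\in W_\beta$ we have $x\notin\overline X\setminus W_\beta$, so some homogeneous $h\in I_\beta$ of positive degree has $h(x)\ne0$. Then $\overline X\cap D_+(h)\subseteq W_\beta\subseteq X$, and this set is affine, since $D_+(h)$ is affine and $\overline X\cap D_+(h)$ is closed in it. These sets cover $X$. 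By quasi-compactness of $X$, finitely many suffice; we take them as the $U_\alpha$, with the corresponding forms $h_\alpha$.

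\emph{Equations of the target.} Fix a trivialization $L|_{U_\alpha}\simeq U_\alpha\times\A^1$ with fiber coordinate $t_\alpha$. Then $L|_{U_\alpha}$ is affine with coordinate ring $\Gamma(U_\alpha,\cO)[t_\alpha]$. The closed set $V\cap L|_{U_\alpha}$ is defined by an ideal, which is finitely generated because the ring is Noetherian. Each generator is a polynomial in $t_\alpha$ whose coefficients lie in $\Gamma(U_\alpha,\cO)$; this gives the form \eqref{eq:target-equations}.

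It remains to express the coefficients. The coordinate ring of the affine variety $\overline X\cap D_+(h_\alpha)$ is the degree-zero part of the localization $(S/I(\overline X))_{h_\alpha}$, where $S$ is the homogeneous coordinate ring. Hence every regular function on $U_\alpha$ can be written as $A/h_\alpha^{b}$ with $A$ homogeneous of degree $b\deg h_\alpha$. The same description applies to the finitely many prescribed regular functions, which are regular on $U_\alpha$ by the first step, so they receive presentations of the same form.

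Once one notes that local closedness makes $\overline X\setminus W_\beta$ closed in $\overline X$, the rest is bookkeeping; the step needing the most care is checking that $\overline X\cap D_+(h)$ lands inside $X$ rather than only inside $\overline X$, and the choice of $h$ in $I_\beta$ is exactly what guarantees this.
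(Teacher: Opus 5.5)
Your proof is correct and follows essentially the same route as the paper: you refine a trivializing cover (and the prescribed cover) by the affine opens $\overline X\cap D_+(h)$ contained in $X$ and pass to a finite subcover by quasi-compactness. You then read the coefficients off the degree-zero localization $B_{(h_\alpha)}$ and use finite generation in the Noetherian ring $B_{(h_\alpha)}[t_\alpha]$. The main difference is that you prove the basis property the paper cites from EGA, via the homogeneous ideal of $\overline X\setminus W_\beta$; your justification that this complement is closed is roundabout but valid, and it follows at once because $W_\beta$ is open in $X$, which is open in $\overline X$.
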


\begin{proof}
The sets $D_+(h)\cap\overline X$, with $h$ homogeneous of
positive degree, form an affine basis for the topology of
$\overline X$. More precisely, if $B$ is its homogeneous
coordinate ring, then
\begin{equation}\label{eq:projective-chart-ring}
 D_+(h)\cap\overline X
 \simeq \operatorname{Spec}B_{(h)},
 \qquad
 B_{(h)}=\bigl(B[h^{-1}]\bigr)_0,
\end{equation}
where the subscript $0$ denotes the degree-zero part, consisting
of homogeneous fractions whose numerator and denominator have
the same degree.
These are the affine charts in the construction of
$\operatorname{Proj}$
\cite[Propositions~2.3.4 and~2.3.6, and~2.4.1]{EGAII}.
Because $X$ is open in $\overline X$, a bundle-trivializing cover and
any prescribed cover may be refined by affine opens of the form
$D_+(h)\cap\overline X$ contained in $X$. Quasi-compactness gives a finite
subcover.

For $B=K[X_0,\ldots,X_N]/I(\overline X)$, the chart description \eqref{eq:projective-chart-ring} gives
$\Gamma(U_\alpha,\cO_X)=B_{(h_\alpha)}$.
Every element is represented by $A/h_\alpha^b$, where $A$ is homogeneous
of degree $b\deg h_\alpha$: we combine its finitely many homogeneous
fractions over a common denominator and lift the numerator from $B$
to the polynomial ring.  Finally, we use
$\Gamma(L|_{U_\alpha},\cO_L)=B_{(h_\alpha)}[t_\alpha]$
and the fact that this ring is Noetherian.  Thus the ideal of the closed target on this affine chart
has finitely many polynomial generators.  We apply the preceding
description to their coefficients to prove the assertions.
\end{proof}

The transfer argument requires an integral model in which contraction
and critical absorption persist in every special fiber.

\begin{definition}[Models and spreading out]\label{def:models}
A \emph{model over $R$} is a collection of schemes and morphisms over $R$
whose base change to $\operatorname{Frac}(R)$ recovers the given data.
Choosing such a model over a finitely generated ring is called
\emph{spreading out}.
\end{definition}

Locally, the inclusion
$\Phi^*\cI_{Z_L}\subseteq\cI_{Z_L}^d$ states that a target fiber
coordinate pulls back to a multiple of the $d$th power of a source fiber
coordinate. In Lemma~\ref{lem:good-model}, a subscript $R$ denotes the
chosen model and a subscript $s$ its base change to a geometric point.

\begin{lemma}[A model preserving contraction and critical absorption]
\label{lem:good-model}
Let $K$ be a finitely generated field of characteristic zero.  Suppose that
$X$, $L$, $\varphi$, $\Phi$, $Z_L$, $C_\Phi$, a point $y_0\in L(K)$, and a
closed subvariety $V\subseteq L$ are in the setting of
Definition~\ref{def:line-bundle-dynamics}. Assume
$\Phi^*\cI_{Z_L}\subseteq\cI_{Z_L}^d$ and $\Phi^\ell(C_\Phi)\subseteq Z_L$ set-theoretically.
Then there are a finitely generated integral $\mathbb Z$-algebra $R$ with
fraction field $K$ and models of all these data over $R$ such that:
\begin{enumerate}[label=\textup{(\roman*)},leftmargin=*]
\item $X_R$ is smooth over $R$, $L_R\to X_R$ is a line bundle, and
$\varphi_R$ is \'{e}tale;
\item $y_0$ extends to an $R$-point, $V$ extends to a closed subscheme, and
$\Phi_R^*\cI_{Z_{L,R}}\subseteq\cI_{Z_{L,R}}^d$;
\item if $C_{\Phi,R}$ is the zero scheme of the relative vertical
differential, then for every geometric point $s\to\operatorname{Spec}R$,
$\Phi_s^\ell(C_{\Phi,s})\subseteq Z_{L,s}$ set-theoretically.
\end{enumerate}
Any prescribed finite collection of nonzero elements of $K$ may, after
enlarging $R$, be required to be units.
\end{lemma}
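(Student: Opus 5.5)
The plan is a standard spreading-out argument, with care taken only in (iii), where we need a set-theoretic inclusion on every geometric fiber. First choose a finitely generated $\mathbb Z$-subalgebra $R_0\subseteq K$ with fraction field $K$ that contains the coefficients of finitely many equations defining all the data. These are affine charts of $X$ (via Lemma~\ref{lem:adapted-projective-charts}), trivializations of $L$ with transition units, the coordinate formulas for $\varphi$ and $\Phi$, the coordinates of $y_0$, and generators of the ideal of $V$. By the standard limit theorems of EGA~IV$_3$, \S8, after inverting one nonzero element the following hold. The gluing data define a separated finite-type $X_R$ with a line bundle $L_R$. The morphisms $\varphi_R,\Phi_R$ are defined with $\pi_R\circ\Phi_R=\varphi_R\circ\pi_R$. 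The point $y_0$ extends to an $R$-point, and $V$ extends to the closed subscheme given by the same generators. Smoothness of $X_R$ and \'etaleness of $\varphi_R$ spread out after further localization, because they are open conditions on the source expressed by invertibility of Jacobian minors (Proposition~\ref{prop:jacobian-input}); we add the inverses of those finitely many minors to $R$. The prescribed nonzero elements of $K$ are inverted at the same time.

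For the contraction in (ii), work chartwise. On each trivializing chart $U_\alpha$ with fiber coordinate $t_\alpha$, the hypothesis $\Phi^*\cI_{Z_L}\subseteq\cI_{Z_L}^d$ says that $\Phi^*t_\beta=t_\alpha^d\,c_{\alpha\beta}$ on $L|_{U_\alpha}\cap\Phi^{-1}(L|_{U_\beta})$, for a regular function $c_{\alpha\beta}$. The intersection is covered by finitely many principal affines. On each of them, $c_{\alpha\beta}$ is a finite expression with coefficients in $K$, so we adjoin these coefficients and the denominators. The identity then holds over $R$ and gives the ideal inclusion for $\Phi_R$. The vertical differential is given locally by $\partial(\Phi^*t_\beta)/\partial t_\alpha$. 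Hence $C_{\Phi,R}$ is defined by the same local formula, and its formation commutes with base change to every fiber.

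For (iii), first convert the set-theoretic hypothesis into ideal membership, as the introduction indicates. On a chart, let $\delta$ be a local generator of the ideal of $C_\Phi$. The condition $\Phi^\ell(C_\Phi)\subseteq Z_L$ means that $(\Phi^\ell)^*t_\gamma$ vanishes on $V(\delta)$. By the Nullstellensatz, $\bigl((\Phi^\ell)^*t_\gamma\bigr)^{N}=\delta\,b$ for some $N\geq1$ and regular $b$, on each of finitely many principal affine pieces covering the relevant preimages. We adjoin the coefficients of all such $b$ and the localizing denominators to $R$. These identities hold over $R$, so after base change to any geometric point $s$ they hold with the same $N$. Then $(\Phi_s^\ell)^*t_\gamma$ vanishes wherever $\delta_s$ does, which gives $\Phi_s^\ell(C_{\Phi,s})\subseteq Z_{L,s}$.

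The main obstacle is ensuring that the finitely many chosen charts and principal pieces of the model still cover the relevant loci in every fiber. Otherwise the radical identities would not control all of $C_{\Phi,s}$. I would handle this by also spreading out the covering statements themselves. The unit ideal in each affine piece is generated by the chosen localizing elements via an explicit identity $\sum g_i a_i=1$, and we adjoin the coefficients of the $a_i$. This keeps the covering valid after arbitrary base change.
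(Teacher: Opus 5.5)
Your proposal is correct and follows essentially the same route as the paper. Both arguments spread out all the data and localize so that smoothness, \'etaleness, and the chartwise contraction identities $\Phi^*t_\beta=t_\alpha^d c_{\alpha\beta}$ hold over $R$; both then encode critical absorption as radical-membership identities $\bigl((\Phi^\ell)^*t_\gamma\bigr)^N\in(\delta)$, which survive base change to every geometric fiber.

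Your explicit spreading out of partition-of-unity identities, so that the chosen charts and principal pieces still cover each fiber, makes precise a point the paper leaves implicit. One small correction: the Jacobian minors are functions on $X$, not elements of $K$, so what you adjoin to $R$ are the coefficients of the identities witnessing their invertibility, not their inverses.
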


\begin{proof}
All schemes, morphisms, the line bundle, its zero section, the point, and the
target involve finitely many equations and transition functions.  We
therefore descend them to a finitely generated integral $\mathbb Z$-algebra
with fraction field $K$.  Smoothness of $X$ and \'{e}taleness of $\varphi$
hold on open neighborhoods of the generic fiber, so we localize the base to
make them hold everywhere. After one further localization, the point extends to
an $R$-point. This establishes \textup{(i)} and the first assertion of
\textup{(ii)}.

The ideal inclusion for radial contraction is a finite collection of
identities on a finite affine trivializing cover of $L_R$.  We clear their
finitely many denominators and localize once more; the same identities then
prove
$\Phi_R^*\cI_{Z_{L,R}}\subseteq\cI_{Z_{L,R}}^d$ over $R$.

It remains to preserve the set-theoretic inclusion in every fiber.  On a
finite affine cover, we let $a_1,\ldots,a_t$ be generators of
$(\Phi^\ell)^*\cI_{Z_L}$ and let $J$ define $C_\Phi$.  The generic-fiber
inclusion says
$a_i\in\sqrt J\quad(1\leq i\leq t)$.
Thus, after choosing one common exponent $N\geq1$, we obtain finite identities
$a_i^N\in J$.  We spread the vertical differential and these identities to $R$,
again clearing finitely many denominators.  If a geometric point of a special
fiber lies in $C_{\Phi,s}$, every element of $J_s$ vanishes there; hence every
$a_i^N$ and therefore every $a_i$ vanishes there.  Its image under
$\Phi_s^\ell$ lies in $Z_{L,s}$, which proves \textup{(iii)}. We obtain
the final assertion by adjoining the inverses of the prescribed finite set.
\end{proof}

\subsection{The transfer theorem}

We impose the presentation-growth hypothesis in
Theorem~\ref{thm:line-bundle} only along the selected base orbit. In its
proof, a bar denotes reduction in the chosen integral model; $\overline X$ in
Lemma~\ref{lem:adapted-projective-charts} continues to denote projective
closure. We separate the cases in which the orbit enters the zero
section, its reduction avoids the zero section, and only its reduction enters
the zero section. The final case requires comparison of the radial valuation
with the valuations of the target coefficients.

\begin{theorem}[Orbitwise line-bundle transfer]\label{thm:line-bundle}
In the preceding setting, assume that there is an integer $d\geq2$ such
that
\begin{equation}\label{eq:ideal-contraction}
 \Phi^*\cI_{Z_L}\subseteq\cI_{Z_L}^{d},
\end{equation}
and assume that $\Phi^\ell(C_\Phi)\subseteq Z_L$ set-theoretically for some
$\ell\geq1$.
Fix $y_0\in L(k)$ and a closed subvariety $V\subseteq L$, and put
$x_n=\pi(\Phi^n(y_0))$.  Descend these data to a finitely generated field
$K\subseteq k$ and choose a finitely generated $\mathbb Z$-domain
$R\subseteq K$ with fraction field $K$.  If, for one locally closed
embedding $X\hookrightarrow\Pj^N_K$, the sequence $(x_n)$ has
$R$-presentation growth $o(d^n)$, then
$\{n\in\Nzero:\Phi^n(y_0)\in V\}$
is a finite union of arithmetic progressions.
\end{theorem}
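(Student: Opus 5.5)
The plan is to reduce the return set to a finite union of arithmetic progressions by splitting the orbit into three regimes, as announced before the statement. The split depends on whether the orbit, or its reduction at a suitably chosen controlled place, enters the zero section.

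\emph{Setup and the case where the orbit enters $Z_L$.} First, I would apply Lemma~\ref{lem:good-model} to $X,L,\varphi,\Phi,y_0,V$. I would also fix the finite chart data of Lemma~\ref{lem:adapted-projective-charts}, taken subordinate to a trivializing cover, and enlarge $R$ so that all these data, together with the denominators $h_\alpha$ and the transition units, are defined over $R$. By Lemma~\ref{lem:presentation-calculus}(ii), the hypothesis of $R$-presentation growth $o(d^n)$ survives this enlargement. Next, I would invoke Proposition~\ref{prop:controlled-place} with $g$ the product of the inverted elements. This gives a complete field $\mathcal K$ with finite residue field, an embedding of $R'$ into $O$, and the bound \eqref{eq:controlled-valuation}. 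Base change then gives an $O$-model in which $X_O$ is smooth, $\varphi_O$ is \'etale, contraction \eqref{eq:ideal-contraction} holds, and critical absorption holds on the special fiber by Lemma~\ref{lem:good-model}(iii).

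If $\Phi^{n_0}(y_0)\in Z_L$ for some $n_0$, then the tail stays in $Z_L$, since $Z_L$ is invariant. The map $\Phi|_{Z_L}\cong\varphi$ is \'etale, so Theorem~\ref{thm:etale-dml} applies to the tail with target $V\cap Z_L$. Restoring the first $n_0$ indices settles this case.

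\emph{Reduced orbit avoids the critical locus.} Write $\widetilde y_n$ for the reductions. Suppose first that no $\widetilde y_n$ lies in $C_{\Phi,s}$. Since $\widetilde\varphi$ is \'etale and the vertical differential is nonzero along the reduced orbit, $\widetilde\Phi$ is \'etale at every point of that orbit. Lemma~\ref{lem:local-interpolation}, applied on the smooth $O$-scheme $L_O$, then gives the conclusion directly.

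Otherwise some $\widetilde y_{n_1}\in C_{\Phi,s}$. Critical absorption on the special fiber puts $\widetilde y_{n_1+\ell}$ in the reduced zero section, and it stays there by invariance. So after a shift we may assume the whole reduced orbit lies in $Z_{L,s}$ while no $y_n$ lies in $Z_L$.

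\emph{The contracting case.} I would split indices into progressions on which $\widetilde x_n$ is eventually constant, so that each progression stays in one chart $U_\alpha$. Let $t_n=t_\alpha(y_n)$. Then $v(t_n)>0$, and \eqref{eq:ideal-contraction} gives $t_{n+1}=t_n^d\cdot(\text{integral})$ after an integral transition unit. Hence $v(t_n)\ge c\,d^n$ for some $c>0$ and all large $n$ in the progression, and $t_n\neq0$.

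Now take an equation \eqref{eq:target-equations} and let $j$ be the least index with $a_j(x_n)\neq0$. The coefficients $a_{j'}(x_n)$ are integral up to a fixed bounded denominator. Using \eqref{eq:target-coefficients}, Lemma~\ref{lem:presentation-calculus}(i) and \eqref{eq:controlled-valuation}, the leading term satisfies $v(a_j(x_n))\le A\comp_R(a_j(x_n))=o(d^n)$. Therefore $v(a_j(x_n)t_n^j)\le j\,v(t_n)+o(d^n)$, while every higher term has valuation at least $(j+1)v(t_n)-O(1)$. So for large $n$ the leading term cannot be canceled, and $P_{\alpha,i}(y_n)\neq0$.

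Consequently, for large $n$ in the progression, $y_n\in V$ if and only if $x_n$ lies in the closed subset $\{a_{\alpha,i,j}=0\ \forall i,j\}$ of $U_\alpha$. The base orbit has \'etale reduction everywhere because $\varphi_O$ is \'etale. Lemma~\ref{lem:local-interpolation} on $X_O$ therefore shows that the base return set is a finite union of progressions and a finite set. Intersecting with the progression and adding finitely many exceptions completes this case.

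\emph{Main obstacle.} I expect the hardest step to be this contracting comparison. It must be made uniform: the radial lower bound $v(t_n)\ge c\,d^n$ must be established across chart changes with integral unit transition functions. Beyond that, the upper bound $o(d^n)$ must hold for the coefficients $a_j(x_n)=A/h_\alpha^{b}$, evaluated at homogeneous representatives of $x_n$. For the latter I would bound $\comp_R$ of the numerator and of $h_\alpha(\boldsymbol x_n)$ separately by $O(\comp_R(\boldsymbol x_n))$, then use only the upper bound on $v$ for the numerator together with the integrality $v(h_\alpha(\boldsymbol x_n)^{b})\ge0$.
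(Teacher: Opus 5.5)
Your proposal is correct and follows essentially the paper's proof. It uses the same three regimes, the same controlled place, and the same comparison of the radial valuation $v(t_n)\geq c\,d^n$ with the bound $v(a_{i,j}(x_n))=o(d^n)$ from Lemma~\ref{lem:presentation-calculus}(i) and \eqref{eq:controlled-valuation}. The only repackaging is at the end: you recast the dominance argument as ``$y_n\in V$ if and only if $x_n\in\{a_{i,j}=0\}$'' and apply Lemma~\ref{lem:local-interpolation} to the base orbit, whereas the paper first refines the progressions by Strassmann so that each coefficient sequence is identically zero or never zero.

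One caution on your closing aside: $v(h_\alpha(\boldsymbol x_n))\geq0$ is not automatic, because the low-complexity homogeneous representatives need not be integral at the place. Your main-text route avoids this, since it applies Lemma~\ref{lem:presentation-calculus}(i) directly to the scaling-invariant quotient $A_{\alpha,i,j}/h_\alpha^{b_{\alpha,i,j}}$. Alternatively, the two-sided bound $|v(\xi)|\leq A\comp_R(\xi)$, obtained from $\comp_R(\xi^{-1})=\comp_R(\xi)$, would also repair it.
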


\begin{proof}
We write $y_n=\Phi^n(y_0)$ and
$\mathcal R=\{n\in\Nzero:y_n\in V\}$.

\smallskip
\noindent\emph{Choosing the charts and the nonarchimedean model.}
We apply Lemma~\ref{lem:adapted-projective-charts} to the embedding in the
presentation-growth hypothesis.  Over $K$, we fix its finite cover, bundle
trivializations, target equations, and homogeneous presentations of all
coefficients.  We spread this entire finite collection together with the
dynamical data by Lemma~\ref{lem:good-model}.  After enlarging and localizing
$R$, the chosen opens cover $X_R$, the trivializations and their inverse
transition identities hold over $R$, and the target equations on every
chart define $V_R$.  All coefficient-presentation identities also hold
over $R$.  These requirements involve only finitely many equations and
denominators.

A single localization of the base makes these generic-fiber
conditions hold over the model. Indeed, the loci where they fail
are finitely many finite-type loci with empty generic fiber. On each member
$\operatorname{Spec}B$ of a finite affine cover of such a locus, the equality $B\otimes_R K=0$
means that some nonzero element of $R$ annihilates $1$ in $B$.
We remove the locus by inverting the product of these finitely many elements.
This observation preserves the prescribed open cover, its containments,
and the generic-fiber identifications after spreading.
Lemma~\ref{lem:presentation-calculus}(ii) shows that replacing $R$ in
this way preserves the hypothesis of $o(d^n)$ presentation growth.
We choose the nonarchimedean place after these charts and identities
have been fixed. In every geometric special fiber, we have
\begin{equation}\label{eq:reduced-absorption}
 \overline y\in C_{\overline\Phi}
 \quad\Longrightarrow\quad
 \overline\Phi^{\,\ell}(\overline y)\in\overline Z_L.
\end{equation}

We apply Proposition~\ref{prop:controlled-place} to the single product of these
finitely many constant denominators, excluding $2$ and the primes dividing
$d$, and obtain an embedding $K\hookrightarrow\cK$ into a complete rank-one valued field with
finite residue field and a valuation normalized by $v(p)=1$.  The integral
model, $y_0$, and therefore every $y_n=\Phi^n(y_0)$ are
$\cK^\circ$-integral.  A transition function and its inverse are both
integral on an overlap, so their values are units.  Moreover,
\begin{equation}\label{eq:complexity-to-valuation}
 v(\xi)\leq A\comp_R(\xi)\qquad(\xi\in K^\times).
\end{equation}

\smallskip
\noindent\emph{Orbits that enter the zero section.}
If $y_a\in Z_L$ for some $a$, then every subsequent point lies in $Z_L$ by
\eqref{eq:ideal-contraction}: in a local trivialization, the pullback of a
target fiber coordinate belongs to $(t^d)$ and therefore vanishes at $t=0$.
Under the identification $Z_L\simeq X$, the
restricted map is $\varphi$.  We apply Lemma~\ref{lem:local-interpolation}
to the \'{e}tale orbit of $x_a$ and the closed subvariety
$V\cap Z_L$ (viewed as a subvariety of $X$) to obtain the required description
of the return times $n\geq a$.
The finitely many earlier indices do not affect the conclusion.  We may
therefore assume that $y_n\notin Z_L$ for every $n$.

\smallskip
\noindent\emph{Reduced orbits that avoid the zero section.}
We first consider the case in which $\overline y_n\notin\overline Z_L$ for every $n$.  If a
point of the reduced orbit belonged to $C_{\overline\Phi}$, implication
\eqref{eq:reduced-absorption} would put a later point in
$\overline Z_L$, a contradiction.  Thus the vertical differential is
invertible along the reduced orbit.  The base map is \'{e}tale, and in local
bundle coordinates $(x,t)$ the Jacobian of $\Phi$ is
$\begin{psmallmatrix}D\varphi&0\\ *&\partial t'/\partial t\end{psmallmatrix}$.
Both diagonal blocks are invertible along the reduced orbit.  Hence
$\overline\Phi$ is \'{e}tale at every point of the reduced orbit.
We now apply Lemma~\ref{lem:local-interpolation} on the total space of
the line bundle, which proves the assertion in this case.

\smallskip
\noindent\emph{Reduced orbits that enter the zero section.}
We now suppose that $\overline y_a\in\overline Z_L$ for some $a$.
The reduced zero section is invariant, so the same holds for every $n\geq a$.
We choose local trivializations of the integral line bundle around the finitely
many points of the reduced base orbit.  If $t_n$ denotes the corresponding
fiber coordinate, its valuation
$w_n=v(t_n)$
is independent of the chosen trivialization because transition functions
are units.  We have $w_n>0$ for $n\geq a$.  The ideal containment
\eqref{eq:ideal-contraction} says in these source and target trivializations
that $t_{n+1}=t_n^d h_n$ for an integral value $h_n$.  Hence
\begin{equation}\label{eq:radial-growth}
 w_{n+1}\geq d w_n,
 \qquad
 w_n\geq d^{n-a}w_a.
\end{equation}

The reduced base orbit is finite.  We partition the tail according to the
reduced base point, and choose for each such point one of the already fixed affine charts
$U_R$ containing it; write $U$ for its generic fiber.  Every integral point
in its residue tube factors through $U_R$, because the inverse image of
$U_R$ is an open subset of the spectrum of a local ring containing its
closed point.  On one fixed part of this partition, a finite collection of
equations
\begin{equation}\label{eq:transfer-target-equations}
 P_i(x,t)=\sum_{j=0}^{D_i}a_{i,j}(x)t^j,
 \qquad 1\leq i\leq s,
\end{equation}
cuts out $V$ on $L|_U$.  These are the equations \eqref{eq:target-equations} fixed before choosing
the place; their coefficient functions extend regularly over $U_R$.
Consequently
\begin{equation}\label{eq:coefficient-integrality}
 v(a_{i,j}(x_n))\geq0
\end{equation}
whenever $x_n$ lies in the chosen tube.

We refine the partition by the analytic parametrizations of the base orbit
constructed in Lemma~\ref{lem:local-interpolation} and by the reduced base
point. Along each parametrization, every $a_{i,j}$ gives a restricted
one-variable analytic function. By Strassmann's theorem
(Theorem~\ref{thm:strassmann}), this function either vanishes
identically or has finitely many zeros. Since there are only
finitely many coefficients and parametrizations, discarding a sufficiently
long initial segment leaves finitely many progressions on each of which every
$a_{i,j}(x_n)$ is either always zero or never zero.

Each $a_{i,j}$ has the fixed homogeneous quotient presentation
\eqref{eq:target-coefficients} of Lemma~\ref{lem:adapted-projective-charts}, with denominator nonzero on $U$.
We evaluate that quotient on the homogeneous representatives of $x_n$ from
the growth hypothesis.  Its numerator and denominator have the same degree,
so the result is independent of their common scaling.
Lemma~\ref{lem:presentation-calculus}(i) therefore gives
$\comp_R(a_{i,j}(x_n))=o(d^n)$ on these indices.
Hence
\eqref{eq:complexity-to-valuation} yields
\begin{equation}\label{eq:coefficient-small}
 v(a_{i,j}(x_n))=o(d^n)
\end{equation}
whenever $a_{i,j}(x_n)\ne0$.

We fix one of the resulting progressions and one equation $P_i$ from
\eqref{eq:transfer-target-equations}.  If every
coefficient sequence is identically zero, then $P_i(y_n)=0$ throughout the
progression.  Otherwise, let $j_0$ be the least index whose coefficient sequence
is nonzero on the progression. All coefficients with $j<j_0$ vanish
on that progression, whereas for $j>j_0$ with
$a_{i,j}(x_n)\ne0$ one has
\begin{align*}
 &v\bigl(a_{i,j}(x_n)t_n^j\bigr)
 -v\bigl(a_{i,j_0}(x_n)t_n^{j_0}\bigr)\\
 &\qquad\geq (j-j_0)w_n-v(a_{i,j_0}(x_n))>0
\end{align*}
for all sufficiently large $n$, by
\eqref{eq:coefficient-integrality}, \eqref{eq:radial-growth}, and
\eqref{eq:coefficient-small}.  Indeed, $w_n\geq d^{n-a}w_a$ is a positive
constant times $d^n$, whereas the subtracted term is $o(d^n)$.  Thus the
$j_0$-term is the unique summand of least valuation, and
$P_i(y_n)\ne0$.

Consequently, on the tail of each progression, every defining equation of
$V$ is either always zero or always nonzero.  Membership in $V$ is therefore
constant on that tail.  We take the finite union over all progressions and
restore the omitted indices, which proves the assertion.
\end{proof}

\begin{remark}\label{rem:complexity-gap}
The presentation-growth hypothesis is used only in the contracting
residue tube. It expresses the separation required by our method:
the algebraic complexity of the base orbit, tangent to the zero
section, must be asymptotically smaller than the transverse
contraction scale $d^n$.  Over
$\overline{\mathbb Q}$, the product formula turns a corresponding height
bound into the needed valuation estimate.  Over a general
characteristic-zero field, Proposition~\ref{prop:controlled-place} performs
that role directly.
\end{remark}

\section{Higher-rank radial skew products}
\label{sec:higherrank}

\subsection{The radial lift and its critical locus}

By Proposition~\ref{prop:bundle-blowup-input} and
\eqref{eq:blowup-bundle}, we identify the blow-up along the zero section with the total
space of the tautological line bundle
\cite[Corollary~8.7.7 and Remark~8.7.8]{EGAII}.
In Proposition~\ref{prop:blowup}, we verify the geometric hypotheses of
Theorem~\ref{thm:line-bundle} for the radial dynamics.
\begin{definition}[Lifts and equivariance]\label{def:equivariance}
A \emph{lift} to a blow-up is a map commuting with the projection to the
original space. This commutation is called \emph{equivariance}, and the
projection is called the \emph{blow-down}.
\end{definition}
\begin{definition}[Rees algebras]\label{def:rees-algebra}
The Rees algebra of an ideal $I$ is the graded algebra
$\bigoplus_{n\geq0}I^n$, with $I^n$ placed in degree $n$ and
multiplication induced by $I^a I^b\subseteq I^{a+b}$.
\end{definition}

The bundle
$\mathcal O_{S\times\Pj^{r-1}}(-1)$ in the proposition is the
tautological line bundle pulled back from $\Pj^{r-1}$.
The ring in the condition on $\rho$ is $k[S][z_1,\ldots,z_r]$.
In this section, $\widetilde Y$ and $\widetilde F$ denote a
blow-up and its lifted map, not reduction modulo a valuation.

\begin{proposition}[Radial blow-up]\label{prop:blowup}
Let $S$ be an affine space, let $T:S\to S$ be an \'{e}tale polynomial
endomorphism, and
let $M(u)\in\operatorname{GL}_r(k[S])$.  Let
$d\geq2$ and
$0\ne\rho(u,\boldsymbol z)\in(z_1,\ldots,z_r)^{d-1}$.
Define
$F(u,\boldsymbol z)= \bigl(T(u),\rho(u,\boldsymbol z)M(u)\boldsymbol z\bigr)$,
and let $\beta:\widetilde Y\to S\times\A^r$ be the blow-up of
$Z=S\times\{0\}$.  Then:
\begin{enumerate}[label=\textup{(\roman*)},leftmargin=*]
\item $\widetilde Y$ is the total space of
$\mathcal O_{S\times\Pj^{r-1}}(-1)$, and its inverse-image divisor $E=\beta^{-1}(Z)$ is
the zero section;
\item $F$ lifts to an endomorphism
$\widetilde F:\widetilde Y\to\widetilde Y$ lying over the \'{e}tale map
$\widehat T(u,[\boldsymbol z])= \bigl(T(u),[M(u)\boldsymbol z]\bigr)$;
\item $\widetilde F^*\cI_E\subseteq\cI_E^{d}$;
\item if the vertical critical locus $C$ of $F$ satisfies
$F^\ell(C)\subseteq Z$, then the vertical critical locus
$C_{\widetilde F}$ satisfies
$\widetilde F^\ell(C_{\widetilde F})\subseteq E$.
\end{enumerate}
\end{proposition}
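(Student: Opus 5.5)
I will work throughout in the standard charts of the blow-up. Part (i) is Proposition~\ref{prop:bundle-blowup-input} applied to the trivial bundle $E=S\times\A^r$ over $S$. Concretely, $\widetilde Y=\{(u,[\boldsymbol w],\boldsymbol z):\boldsymbol z\in\text{line }[\boldsymbol w]\}$. On the chart $w_i\neq0$ I use coordinates $(u,\boldsymbol y,t)$ with $y_i=1$ and $\boldsymbol z=t\boldsymbol y$. Here $t$ is the fiber coordinate of $\mathcal O(-1)$, and $E=\{t=0\}$ is the zero section.

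For (ii), I write $\rho(u,t\boldsymbol y)=t^{d-1}\rho_1(u,\boldsymbol y,t)$, with $\rho_1$ polynomial; this uses $\rho\in(\boldsymbol z)^{d-1}$. Then
\[
F(u,t\boldsymbol y)=\bigl(T(u),\,t^{d}\rho_1(u,\boldsymbol y,t)\,M(u)\boldsymbol y\bigr).
\]
The vector $M(u)\boldsymbol y$ never vanishes because $M$ is invertible. So on the open set where its $j$th coordinate $m_j=(M(u)\boldsymbol y)_j$ is nonzero, the lift is
\[
\widetilde F(u,\boldsymbol y,t)=\bigl(T(u),\,M(u)\boldsymbol y/m_j,\,t^d\rho_1 m_j\bigr).
\]
These local formulas glue intrinsically to the map $(u,\ell,\boldsymbol z)\mapsto(T(u),M(u)\ell,\rho(u,\boldsymbol z)M(u)\boldsymbol z)$. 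This is a morphism of $\widetilde Y$ lying over $\widehat T$, and $\widehat T$ is \'etale by Lemma~\ref{lem:cocycle}. Equivariance $\beta\widetilde F=F\beta$ is immediate. Part (iii) then follows from the same formula: the target fiber coordinate $t'=t^d\rho_1m_j$ lies in $(t^d)$.

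Part (iv) is the main step. I first compute the vertical differential. In trivializations it is $\partial t'/\partial t$, and since $m_j$ does not depend on $t$,
\[
\partial t'/\partial t=m_j\,\partial_t\bigl(t^d\rho_1\bigr)=m_j\,\partial_t\bigl(t\,\rho(u,t\boldsymbol y)\bigr).
\]
The Euler identity gives $\partial_t\bigl(t\rho(u,t\boldsymbol y)\bigr)=\rho(u,\boldsymbol z)+(E_{\boldsymbol z}\rho)(u,\boldsymbol z)$ at $\boldsymbol z=t\boldsymbol y$. Hence $C_{\widetilde F}=\beta^{-1}\bigl(V(\rho+E_{\boldsymbol z}\rho)\bigr)$ with $m_j$ a unit.

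Now take $y\in C_{\widetilde F}$. If $y\in E$, then $\widetilde F^\ell(y)\in E$ by (iii). Otherwise $\beta(y)\notin Z$ and $\beta(y)\in V(\rho+E\rho)$. By the determinant formula \eqref{eq:general-critical-determinant} this gives $\beta(y)\in C$, so $F^\ell(\beta(y))\in Z$. By equivariance $\beta(\widetilde F^\ell(y))=F^\ell(\beta(y))\in Z$, hence $\widetilde F^\ell(y)\in\beta^{-1}(Z)=E$.

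The step I expect to be the subtle one is the domain of the hypothesis. Critical absorption is stated on $S\times\A^r$, but $T$ here is just an \'etale endomorphism of the affine space $S$, so I only need the inclusion over $S$ as given. I would also check carefully that $\partial t'/\partial t$ is the intrinsic vertical derivative. The transition functions for $t$ are the units $y_i/y_{i'}$, which do not depend on $t$, so the zero scheme is well defined. Apart from that, the argument is routine chart bookkeeping.
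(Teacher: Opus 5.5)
Your proposal is correct and follows essentially the same route as the paper: chart coordinates $\boldsymbol z=t\boldsymbol q$, the factorization $\rho(u,t\boldsymbol q)=t^{d-1}G$, a target chart where $(M(u)\boldsymbol q)_j\neq0$, the computation $\partial t'/\partial t=a\,(\rho+E_{\boldsymbol z}\rho)(u,t\boldsymbol q)$, the determinant identity \eqref{eq:general-critical-determinant}, and equivariance of the blow-down. Your separate treatment of points $y\in E$ is harmless but not needed, since $\rho+E_{\boldsymbol z}\rho$ already vanishes on $Z$ (because $d\geq2$), so $\beta(y)\in C$ holds there as well.
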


\begin{proof}
We apply Proposition~\ref{prop:bundle-blowup-input} through the Rees
algebra of Definition~\ref{def:rees-algebra} to identify the blow-up with the
tautological line bundle and prove \textup{(i)}. Concretely, a point of
$\widetilde Y$ consists of $(u,\boldsymbol z)$ together with a projective
direction $[\boldsymbol q]$ satisfying
$\boldsymbol z\in k\boldsymbol q$; the scalar of
proportionality is the fiber coordinate of the tautological line bundle.

On the chart $q_i=1$, we write $\boldsymbol z=t\boldsymbol q$.  Since $\rho$ has fiber order
at least $d-1$, there is a polynomial $G$ in the chart coordinates such that
$\rho(u,t\boldsymbol q)=t^{d-1}G(u,\boldsymbol q,t)$.
Therefore
\begin{equation}\label{eq:lifted-map-local}
 \rho(u,t\boldsymbol q)M(u)(t\boldsymbol q)
 =t^dG(u,\boldsymbol q,t)M(u)\boldsymbol q.
\end{equation}
Because $M(u)\boldsymbol q$ is never the zero vector, its projective class
is defined on every chart.  We therefore glue formula
\eqref{eq:lifted-map-local} to a morphism $\widetilde F$.  Its projective component is $\widehat T$, and its normal component
belongs to the ideal generated by $t^d$. These two conclusions establish
\textup{(ii)} and \textup{(iii)}, respectively.

The base map $\widehat T$ is \'{e}tale by Lemma~\ref{lem:cocycle}.
We verify critical absorption by computing the radial derivative.
On a source projective chart, we write $\boldsymbol z=t\boldsymbol q$ with
$q_i=1$ and choose a target chart on which
$a(u,\boldsymbol q)=(M(u)\boldsymbol q)_j\ne0$.
Such target charts cover the source, because $M(u)$ is invertible and
$\boldsymbol q$ is nonzero.  On this overlap the lifted map is
\begin{equation}\label{eq:lift-chart}
 u'=T(u),\qquad
 \boldsymbol q'=
       \frac{M(u)\boldsymbol q}{a(u,\boldsymbol q)},\qquad
 t'=a(u,\boldsymbol q)t\rho(u,t\boldsymbol q).
\end{equation}
The factor $a(u,\boldsymbol q)$ is invertible on the overlap and is
independent of $t$. We differentiate the fiber coordinate in
\eqref{eq:lift-chart} only in the line-bundle direction and obtain
\begin{equation}\label{eq:critical-comparison}
 \frac{\partial t'}{\partial t}
   =a(u,\boldsymbol q)
       \bigl(\rho+E_{\boldsymbol z}\rho\bigr)(u,t\boldsymbol q).
\end{equation}
Here $t\,\partial_t\rho(u,t\boldsymbol q)
 =(E_{\boldsymbol z}\rho)(u,t\boldsymbol q)$.
Because $a(u,\boldsymbol q)$ is a unit,
\eqref{eq:critical-comparison} shows that
$y\in C_{\widetilde F}$ implies
$(\rho+E_{\boldsymbol z}\rho)(u,t\boldsymbol q)=0$.
The determinant identity \eqref{eq:general-critical-determinant} then gives
$\beta(y)\in C$. This calculation includes points with $t\ne0$ and
$\rho(u,t\boldsymbol q)=0$, whose images lie above the zero section even
though the target blow-down is not invertible there.

For $y\in C_{\widetilde F}$, equivariance
(Definition~\ref{def:equivariance}) and the assumed absorption
give
$\beta\bigl(\widetilde F^\ell(y)\bigr) =F^\ell\bigl(\beta(y)\bigr)\in Z$.
The equality $\beta^{-1}(Z)=E$ therefore implies
$\widetilde F^\ell(C_{\widetilde F})\subseteq E$.
\end{proof}

When $r=1$, the center is an effective Cartier divisor: locally,
its ideal is generated by one non-zero-divisor. The blow-up is
therefore the identity and the tautological line bundle is trivial.
Thus $E$ is the inverse-image divisor of $Z$; it is an exceptional
divisor only for $r\geq2$, when it records the projective directions
replacing each point of the center.

\subsection{Proof of the principal theorem}

\begin{proof}[Proof of Theorem~\ref{thm:main}]
We fix $x_0\in\A^{m+r}(k)$ and a closed subvariety
$V\subseteq\A^{m+r}$. We discard the first $N$ iterates and replace the target by its
intersection with $S\times\A^r$, where $S=T^N(\A^m)=T^{N+1}(\A^m)$.
The restriction $T|_S$ is a surjective affine-linear self-map and hence an
affine automorphism. This replacement translates the remaining return times
by $N$ and changes only finitely many initial indices. We reuse $x_0$
for the first point of the remaining orbit.

If the restriction of $\rho$ to $S\times\A^r$ is zero, then one further
iterate sends the entire space into $Z$.  The subsequent dynamics is the \'{e}tale
map $T|_S$ on the zero section.  We apply
Theorem~\ref{thm:etale-dml} to $V\cap Z$, which proves the result in this case.
We may therefore assume that this restriction of $\rho$ is nonzero.

We let $\beta:\widetilde Y\to S\times\A^r$ and $\widetilde F$ be as in
Proposition~\ref{prop:blowup}.  The base of the tautological line bundle is
$S\times\Pj^{r-1}$, and its induced endomorphism is $\widehat T$.
We choose $\widetilde x_0\in\beta^{-1}(x_0)$ and put
$\widetilde V=\beta^{-1}(V)$; if $x_0\in Z$, any projective direction
determines a lift.  After descending these data to a finitely generated
field, Lemma~\ref{lem:cocycle} shows that
$\widehat T$ is \'{e}tale and that its relevant orbit has presentation growth
$O\bigl(n^2\log(n+2)\bigr)=o(d^n)$,
because $T|_S$ is affine-linear and $d\geq2$.
Proposition~\ref{prop:blowup} also gives
$\widetilde F^*\cI_E\subseteq\cI_E^{d}$ and $\widetilde F^\ell(C_{\widetilde F})\subseteq E$.
All hypotheses of Theorem~\ref{thm:line-bundle} are therefore satisfied for
$(\widetilde x_0,\widetilde V)$.  Equivariance gives
$\widetilde F^n(\widetilde x_0)\in\widetilde V \quad\Longleftrightarrow\quad F^n(x_0)\in V$
for every $n\geq0$.  Hence the return set for $(x_0,V)$ equals a return set
for $\widetilde F$ and is a finite union of arithmetic progressions.
We restore the first $N$ iterates to complete the proof.
\end{proof}

\subsection{Nonlinear bases and the degree gap}

For nonlinear \'{e}tale bases, we compare the degree of an iterate
of the base map with the radial order of the same iterate. The strict
inequality in \eqref{eq:degree-gap} makes the presentation-growth bound
\eqref{eq:cocycle-growth} of Lemma~\ref{lem:cocycle} smaller than the
radial contraction scale.

\begin{corollary}[\'{E}tale polynomial bases with a degree gap]
\label{cor:degree-gap}
Let $m,r\geq1$, and let
$T:\A^m_k\to\A^m_k$ be an \'{e}tale polynomial endomorphism. Let
$M(u)\in\operatorname{GL}_r(k[u_1,\ldots,u_m])$.
Fix $d\geq2$ and
\[
 0\ne\rho(u,\boldsymbol z)\in
 (z_1,\ldots,z_r)^{d-1}
 k[u_1,\ldots,u_m,z_1,\ldots,z_r],
\]
and define
$F(u,\boldsymbol z)= \bigl(T(u),\rho(u,\boldsymbol z)M(u)\boldsymbol z\bigr)$.
Let $Z=\A^m\times\{0\}$ and let $C$ be the vertical critical locus of
$F$.  Suppose that there are $q,\ell\geq1$ such that
\begin{equation}\label{eq:degree-gap}
 \deg(T^q)<d^q
 \qquad\text{and}\qquad
 F^\ell(C)\subseteq Z
\end{equation}
set-theoretically.  Then $F$ satisfies the Dynamical Mordell--Lang
conjecture.
\end{corollary}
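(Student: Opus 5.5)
The plan is to pass to the $q$th iterate, which turns the degree gap into the growth condition $o(d^{qn})$ needed by Theorem~\ref{thm:line-bundle}, and then repeat the proof of Theorem~\ref{thm:main}. Fix $x_0\in\A^{m+r}(k)$ and a closed $V$. The return set of $F$ is the union over residues $0\le b<q$ of $b+q\cdot S_{F^q}(F^b(x_0),V)$. It therefore suffices to show that each $S_{F^q}(x,V)$ is a finite union of arithmetic progressions.

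To make $F^q$ an instance of the blow-up setting, we need to put it in radial form. Induction on $q$ gives
\[
F^q(u,\boldsymbol z)=\bigl(T^q(u),\rho_q(u,\boldsymbol z)M_q(u)\boldsymbol z\bigr),
\]
where $M_q(u)=M(T^{q-1}u)\cdots M(u)\in\operatorname{GL}_r(k[u])$ and $\rho_q=\prod_{j<q}\rho\circ F^j$. One step of the induction multiplies the radial part by $\rho(F^j(\cdot))$. Since the fiber coordinates of $F^j$ lie in $(\boldsymbol z)^{d^j}$, the factor $\rho\circ F^j$ lies in $(\boldsymbol z)^{(d-1)d^j}$. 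Hence $\rho_q\in(\boldsymbol z)^{d^q-1}$, and the radial order of $F^q$ is at least $d^q$. Also, $T^q$ is étale. If $\rho_q=0$, the orbit falls into $Z$ after one step, and Theorem~\ref{thm:etale-dml} applies to the étale map $T$ on $Z$. Otherwise, Proposition~\ref{prop:blowup} applies to $F^q$ with $d$ replaced by $d^q$. This yields a lift $\widetilde{F^q}$ over $\widehat{T^q}(u,[\boldsymbol z])=(T^q(u),[M_q(u)\boldsymbol z])$ with $\widetilde{F^q}^*\cI_E\subseteq\cI_E^{d^q}$.

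Critical absorption for $F^q$ requires a separate check, and I expect it to be the main obstacle. Write $y\mapsto \boldsymbol z'$ for the fiber map of $F$ at $u$. The chain rule gives
\[
D_{\boldsymbol z}(F^q)_{\mathrm{fib}}=\prod_j D_{\boldsymbol z}F_{\mathrm{fib}}(F^j(\cdot)),
\]
because the base coordinate does not depend on $\boldsymbol z$. So the vertical critical locus of $F^q$ is $C_q=\bigcup_{j<q}F^{-j}(C)$. If $y\in C_q$, then $F^j(y)\in C$ for some $j<q$, so $F^{j+\ell}(y)\in Z$. Since $Z$ is invariant, $F^{q\ell}(y)\in Z$, which means $(F^q)^{\ell}(C_q)\subseteq Z$. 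Thus Proposition~\ref{prop:blowup}(iv) holds with exponent $\ell$ for $F^q$. Here I am assuming $\ell'=\ell$ suffices after noting $j+\ell\le q\ell$ when $q\ge1$. The only subtlety is that the critical locus of an iterate really is the union of pulled-back critical loci set-theoretically, and the determinant product formula handles that.

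The remaining step is growth. Lemma~\ref{lem:cocycle} is applied to the étale map $T^q$ with the cocycle $M_q$ and $D=\max\{1,\deg T^q\}$. It gives presentation growth $O(n^2\log(n+2))$ or $O(nD^n)$ for the orbit of $\widehat{T^q}$. By \eqref{eq:degree-gap}, $D<d^q$ (when $D=1$ this holds trivially since $d^q\ge2$), so in both cases the growth is $o((d^q)^n)$. Theorem~\ref{thm:line-bundle} then applies to $\widetilde{F^q}$, the lifted point $\widetilde x\in\beta^{-1}(x)$, and $\beta^{-1}(V)$, with radial exponent $d^q$. Equivariance of $\beta$ converts the conclusion back to $S_{F^q}(x,V)$. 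Finally, we reassemble over the residues $b$.
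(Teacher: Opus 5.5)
Your proposal is correct and takes essentially the same route as the paper: the radial form of $F^q$ with $\rho_q\in(z_1,\ldots,z_r)^{d^q-1}$; the separate treatment of $\rho_q=0$ via Theorem~\ref{thm:etale-dml}; critical absorption for $F^q$ via the vertical chain rule, where your exponent $\ell$ is a valid instance of the paper's condition $qa\geq q-1+\ell$ because $(q-1)(\ell-1)\geq0$; $o(d^{qn})$ growth from Lemma~\ref{lem:cocycle} combined with the degree gap; Theorem~\ref{thm:line-bundle} applied to the lift; and reassembly over residues modulo $q$. The only difference is the order of presentation: you do the residue-class reduction first, while the paper does it last.
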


\begin{proof}
We put $G=F^q$ and first show, by induction on $n$, that the fiber
coordinate of $F^n$ has the form
\begin{equation}\label{eq:iterate-radial-form}
 H_n(u,\boldsymbol z)=
 \rho_n(u,\boldsymbol z)M_n(u)\boldsymbol z,
 \qquad
 M_n(u)=M(T^{n-1}u)\cdots M(u),
\end{equation}
where $M_n$ is polynomially invertible and
$\rho_n\in(z_1,\ldots,z_r)^{d^n-1}$.
Indeed, $\rho_1=\rho$, and composition gives
\begin{equation}\label{eq:radial-recurrence}
 \rho_{n+1}(u,\boldsymbol z)
 =\rho_n(u,\boldsymbol z)\rho(T^nu,H_n(u,\boldsymbol z)).
\end{equation}
Since $H_n$ has fiber order at least $d^n$, the right-hand side of \eqref{eq:radial-recurrence} has
order at least $(d^n-1)+(d-1)d^n=d^{n+1}-1$.  The matrix $M_n$ is
polynomially invertible as a product of polynomially invertible matrices.

If $\rho_q=0$, then $F^q$ maps the whole space into $Z$, and the conclusion
follows by applying Theorem~\ref{thm:etale-dml} to the orbit tail in $Z$.
We therefore assume that $\rho_q\ne0$.  Formula
\eqref{eq:iterate-radial-form} shows that $G$ has the radial form of
Proposition~\ref{prop:blowup}, with radial order $d^q$ and base map $T^q$.

We let $C_G$ be the vertical critical locus of $G$.  The vertical chain rule
gives
\[
 \det D_{\boldsymbol z}H_q(u,\boldsymbol z)
 =\prod_{i=0}^{q-1}
 \det D_{\boldsymbol z}H
 \bigl(F^i(u,\boldsymbol z)\bigr),
\]
where $H=H_1$.  Thus
$C_G=\bigcup_{i=0}^{q-1}F^{-i}(C)$ set-theoretically. If
$x\in F^{-i}(C)$, then $F^i(x)\in C$ and
$F^{i+\ell}(x)\in Z$. Since $Z$ is invariant, $F^{qa}(x)\in Z$ whenever
$qa\geq i+\ell$. Hence
$G^a(C_G)\subseteq Z$ for every integer $a$ satisfying
$qa\geq q-1+\ell$.

To prove Dynamical Mordell--Lang for $G$, we fix a point $x$ and a closed
subvariety $V$, lift $x$ to the blow-up of $Z$, and replace $V$ by its
inverse image.  Proposition~\ref{prop:blowup} lifts $G$ to this blow-up.  The
base of the resulting line bundle is $\A^m\times\Pj^{r-1}$, on which the
induced map is the projective-linear cocycle associated with $T^q$ and
$M_q$.  By Lemma~\ref{lem:cocycle}, every orbit of this base map has
presentation growth
\begin{equation}\label{eq:iterated-cocycle-growth}
 \begin{cases}
 O(n^2\log(n+2)),&\deg(T^q)=1,\\
 O\bigl(n\deg(T^q)^n\bigr),&\deg(T^q)\geq2.
 \end{cases}
\end{equation}
The first bound in \eqref{eq:iterated-cocycle-growth} is $o(d^{qn})$.
For the second bound,
$\frac{n\deg(T^q)^n}{d^{qn}}
=n\left(\frac{\deg(T^q)}{d^q}\right)^n\longrightarrow0$
by the strict degree inequality in \eqref{eq:degree-gap}. Hence the
required growth is $o(d^{qn})$
after the orbit and target have been descended to a finitely generated
field.  Proposition~\ref{prop:blowup} also supplies radial contraction of
order $d^q$ and critical absorption.  Hence
Theorem~\ref{thm:line-bundle} applies to the lifted orbit and target.  The
equivariant blow-down argument used in the proof of Theorem~\ref{thm:main}
shows that the return set for $(x,V)$ is a finite union of arithmetic
progressions.  Since $(x,V)$ was arbitrary, $G$ satisfies Dynamical
Mordell--Lang.

Finally, we note that for $0\leq b<q$ and $j\geq0$,
$F^{b+qj}(x)=G^j(F^b(x))$.
The return set for $F$ is therefore the union, over $0\leq b<q$,
of the images of these return sets for $G$ under $j\mapsto b+qj$.
Each image is a finite union of arithmetic progressions, proving the
result for $F$.
\end{proof}

\subsection{Homogeneous radial factors}

For a homogeneous radial factor, we use the determinant formula
\eqref{eq:intro-determinant} to identify the critical locus explicitly and
remove the need for a separate critical-absorption hypothesis.

\begin{corollary}[Homogeneous radial factors]\label{cor:homogeneous}
In Theorem~\ref{thm:main}, suppose that
$P(u,\boldsymbol z)$ is a nonzero polynomial homogeneous of degree $d-1$
in $\boldsymbol z$.  Then every map
$F(u,\boldsymbol z)= \bigl(T(u),P(u,\boldsymbol z)M(u)\boldsymbol z\bigr)$
satisfies the Dynamical Mordell--Lang conjecture.  No separate
critical-absorption hypothesis is required.  The same conclusion holds in
the setting of Corollary~\ref{cor:degree-gap}: for an \'{e}tale polynomial
base, the degree-gap hypothesis alone is sufficient.
\end{corollary}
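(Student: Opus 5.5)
The plan is to reduce Corollary~\ref{cor:homogeneous} to Theorem~\ref{thm:main} and Corollary~\ref{cor:degree-gap}. The only hypothesis missing in each case is critical absorption, so the task is to verify it with $\ell=1$.

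First, identify the vertical critical locus. Since $P$ is homogeneous of fiber degree $d-1$, Euler's identity gives $E_{\boldsymbol z}P=(d-1)P$. Substituting this into \eqref{eq:general-critical-determinant}, or directly using \eqref{eq:intro-determinant}, gives
\[
 \det D_{\boldsymbol z}\bigl(P M\boldsymbol z\bigr)=d\det(M(u))\,P(u,\boldsymbol z)^r .
\]
Here $d\ne0$ because the characteristic is zero, and $\det M(u)$ is a nonzero constant because $M$ is polynomially invertible. Hence $C=V(P)$ set-theoretically. This holds on all of $\A^m\times\A^r$, and after intersecting with $S\times\A^r$ in the setting of Theorem~\ref{thm:main} we get $C=V(P|_{S\times\A^r})$. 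In that case, if $P|_{S\times\A^r}$ happens to vanish identically, then $C$ is everything; the fiber map is then zero on $S\times\A^r$, so absorption with $\ell=1$ still holds.

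Next, check one-step absorption. If $(u,\boldsymbol z)\in C$, then $P(u,\boldsymbol z)=0$, so
\[
 F(u,\boldsymbol z)=\bigl(T(u),0\bigr)\in Z.
\]
On $S\times\A^r$ we also need $T(u)\in S$, which holds because $T(S)=S$. So $\bigl(F|_{S\times\A^r}\bigr)^1(C)\subseteq Z$ with $\ell=1$. The membership $P\in(z_1,\ldots,z_r)^{d-1}$ is automatic from homogeneity of degree $d-1$, and $P\ne0$ by assumption. All remaining hypotheses of Theorem~\ref{thm:main} therefore hold, and the affine-linear case follows.

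For the étale polynomial base, we are in the setting of Corollary~\ref{cor:degree-gap} with $Z=\A^m\times\{0\}$. The same computation shows that $F(C)\subseteq Z$, so the second condition in \eqref{eq:degree-gap} holds with $\ell=1$. The degree-gap inequality $\deg(T^q)<d^q$ is then the only assumption left, and Corollary~\ref{cor:degree-gap} applies.

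There is no real obstacle. The only points needing care are that the determinant identity uses $d\ne0$ in characteristic zero, and the degenerate case where $P$ restricts to zero on $S\times\A^r$, which is handled above and also inside the proof of Theorem~\ref{thm:main}.
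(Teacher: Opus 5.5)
Your proposal is correct and follows the paper's proof. Both identify $C=V(P)$ via \eqref{eq:intro-determinant}, observe that $F$ sends $V(P)$ into $Z$ in one step, and invoke Theorem~\ref{thm:main} or Corollary~\ref{cor:degree-gap} with $\ell=1$. Your remarks on $d\neq0$ in characteristic zero, on $T(S)=S$, and on the degenerate case where $P$ vanishes on $S\times\A^r$ are valid details of that same argument.
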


\begin{proof}
By \eqref{eq:intro-determinant}, the vertical critical locus is
$V(P)$ set-theoretically.  Its image lies in $Z$, so we apply
Theorem~\ref{thm:main}, or Corollary~\ref{cor:degree-gap}, with $\ell=1$.
\end{proof}

\begin{remark}[What is needed beyond a common radial factor]
\label{rem:scope}
The common scalar factor ensures that the lifted projective direction is
independent of the radial coordinate on the entire line bundle, not merely
on its zero section.  This distinction is essential.  For a general
polynomial vector map, the induced map on the exceptional divisor
alone does not provide a skew-product map on the whole line bundle.

The same transfer argument applies to another system if its blow-up is
the total space of a line bundle $\pi:L\to B$ and the original map lifts
to an endomorphism $\Phi:L\to L$ for which there is an \'{e}tale map
$\varphi:B\to B$ satisfying $\pi\circ\Phi=\varphi\circ\pi$ on all
of $L$.  We must also verify the remaining transfer hypotheses: for some
$d\geq2$ the zero-section ideal satisfies
$\Phi^*\cI_{Z_L}\subseteq\cI_{Z_L}^{d}$; one fixed iterate sends the
relative critical locus of $\Phi$ into $Z_L$; and the relevant base orbit
has presentation growth $o(d^n)$.  Under these explicit hypotheses,
Theorem~\ref{thm:line-bundle} applies and equivariance transfers its
conclusion through the blow-down.  None of these conditions, including
the existence of the skew-product lift itself, is automatic for an
arbitrary polynomial vector map.
\end{remark}

\section{Skolem--Mahler--Lech applications}
\label{sec:sml}

The Skolem--Mahler--Lech theorem describes the zero set of a
constant-coefficient linear recurrence as a finite union of arithmetic
progressions, including singletons. Ghioca--Xie obtained the same conclusion
for recurrences whose coefficients were rational functions evaluated along
an orbit of a rational map of degree at least two, wherever those evaluations
were defined \cite[Corollary~1.4]{GhiocaXieSkewLinear}. Wibmer treated
polynomial coefficients evaluated at the integers under the assumption that
the constant trailing coefficient was nonzero
\cite[Corollary~3.3]{Wibmer2015}. Here the \emph{trailing coefficient} is the
coefficient $h_0$ of the earliest term in \eqref{eq:scalar-recurrence}.
Our application concerns joint relations between such linear sequences and
the nonlinear orbits of the preceding sections.

\begin{definition}[The sequence ring and the shift]\label{def:sequence-ring}
We work over $\mathbb C$. Write $\operatorname{Seq}_{\mathbb C}$
for the ring of complex sequences modulo \emph{eventual equality}:
two sequences represent the same element if they agree at all
sufficiently large indices. Operations are componentwise, and the
shift is $\sigma((a_n)_{n\geq0})=(a_{n+1})_{n\geq0}$.
The support of a sequence is the set of indices at which it is
nonzero, regarded modulo finite changes. A sequence is
\emph{eventually nonzero} when every sufficiently late term is
nonzero; this is stronger than saying that it is a nonzero element
of $\operatorname{Seq}_{\mathbb C}$. We identify a complex number
with its constant sequence.
\end{definition}

\begin{definition}[Total quotient rings]\label{def:total-quotient-ring}
For a commutative ring $R$, its \emph{total quotient ring}
$\operatorname{Quot}(R)$ is obtained by inverting all non-zero-divisors,
so its elements are fractions with denominators that annihilate
no nonzero element of $R$. This need not be a single field when
$R$ has zero divisors.
\end{definition}
\begin{definition}[Idempotents and shift-fixed elements]\label{def:idempotents}
An \emph{idempotent} is an element $e$ satisfying $e^2=e$; nonzero
idempotents are \emph{primitive} if they cannot be written as a sum
of two nonzero idempotents whose product is zero. Idempotents whose
product is zero are called \emph{orthogonal}. The notation $L^\sigma$
denotes the elements fixed by the shift.
\end{definition}

In the theorem below, $p$
denotes an eventual period, not the residue characteristic used in
the earlier sections. Likewise, $\sigma$ now denotes the shift,
not the polynomial size function \eqref{eq:polynomial-size}.
A \emph{state} is the full tuple $\boldsymbol x_n$ in
\eqref{eq:augmented-state};
a polynomial in $h+1$ states may use every coordinate of each of
$\boldsymbol x_n,\ldots,\boldsymbol x_{n+h}$. The word
\emph{uniform} refers to the common period after the nonlinear map,
its initial point, and the matrix recurrence have been fixed. This
period does not depend on the polynomial relations or on the number
of successive states. The finite exceptional set may depend on both;
no common exceptional initial segment is asserted.
The sequence ring, total quotient ring, and idempotents in
Theorem~\ref{thm:uniform-sml} are those of
Definitions~\ref{def:sequence-ring},~\ref{def:total-quotient-ring},
and~\ref{def:idempotents}.

\begin{theorem}[Uniform Skolem--Mahler--Lech]
\label{thm:uniform-sml}
Let $F(u,\boldsymbol z)
=\bigl(T(u),\rho(u,\boldsymbol z)M(u)\boldsymbol z\bigr)$
satisfy the hypotheses of Theorem~\ref{thm:main} or
Corollary~\ref{cor:degree-gap}, and fix an orbit
$(u_n,\boldsymbol z_n)=F^n(u_0,\boldsymbol z_0)$.
Choose $a\geq1$ and $A(t)\in\operatorname{GL}_a(\mathbb C[t])$, and set
\begin{equation}\label{eq:augmented-state}
 Y_0=I_a,\qquad Y_{n+1}=A(n)Y_n,\qquad
 \boldsymbol x_n=(n,u_n,\boldsymbol z_n,Y_n),
\end{equation}
where the entries of $Y_n$ are regarded as separate coordinates.

There is an integer $p\geq1$, depending only on $F$, the initial point,
and $A$, with the following property. For every $h\geq0$ and every pair
of polynomials $P,D$ in the coordinates of $h+1$ states, the set
\begin{equation}\label{eq:joint-return-set}
 \left\{n\geq0:
 \begin{array}{l}
 P(\boldsymbol x_n,\ldots,\boldsymbol x_{n+h})=0,\\
 D(\boldsymbol x_n,\ldots,\boldsymbol x_{n+h})\ne0
 \end{array}
 \right\}
\end{equation}
agrees, outside a finite set, with a union of residue classes modulo
$p$. The exceptional finite set may depend on $h,P,D$, but $p$ does not.

Moreover, let $\mathcal R\subseteq\operatorname{Seq}_{\mathbb C}$ be
the $\mathbb C$-algebra generated by the coordinate sequences of
$\boldsymbol x_n$. Its total quotient ring embeds in
$\operatorname{Seq}_{\mathbb C}$ and has a decomposition
$L:=\operatorname{Quot}(\mathcal R) \simeq L_0\times\cdots\times L_{p-1}$
into fields. Shift extends injectively to $L$, permutes its primitive
idempotents in a single cycle, and satisfies $L^\sigma=\mathbb C$.
\end{theorem}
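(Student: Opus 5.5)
The plan is to reduce everything to a single return-set problem for an augmented skew product, and then extract the uniform period from the ring structure by Wibmer's method.

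\emph{Reduction to one augmented map.} Define
\[
 \Psi(t,u,\boldsymbol z,Y)=\bigl(t+1,\,T(u),\,\rho(u,\boldsymbol z)M(u)\boldsymbol z,\,A(t)Y\bigr)
\]
on $\A^1\times\A^m\times\A^r\times M_a$. Then $\boldsymbol x_{n}=\Psi^n(\boldsymbol x_0)$, and since $\boldsymbol x_{n+j}=\Psi^j(\boldsymbol x_n)$ is polynomial in $\boldsymbol x_n$, every $P(\boldsymbol x_n,\ldots,\boldsymbol x_{n+h})$ equals $\widetilde P(\boldsymbol x_n)$ for a polynomial $\widetilde P$; likewise $D$ becomes $\widetilde D$. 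Thus \eqref{eq:joint-return-set} is $S_\Psi(\boldsymbol x_0,V(\widetilde P))\setminus S_\Psi(\boldsymbol x_0,V(\widetilde P,\widetilde D))$, and it suffices to prove Dynamical Mordell--Lang for $\Psi$ along this orbit.

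\emph{Dynamical Mordell--Lang for $\Psi$.} The map $\Psi$ is a radial skew product over the base $(t,u,Y)\mapsto(t+1,T(u),A(t)Y)$. This base is étale (invertible on the $(t,Y)$ part because $A\in\operatorname{GL}_a(\mathbb C[t])$), but it is not affine-linear, so I would not quote Theorem~\ref{thm:main} literally. Instead I would rerun the proof of Theorem~\ref{thm:main} (respectively of Corollary~\ref{cor:degree-gap}, after passing to $F^q$ and splitting residues mod $q$):
\begin{enumerate}[label=\textup{(\alph*)},leftmargin=*]
\item Blow up $\{\boldsymbol z=0\}$ via Proposition~\ref{prop:blowup}. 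The fiber map does not involve $t$ or $Y$, so the vertical critical locus of $\Psi$ is the pullback of $C$, and critical absorption persists.
\item Bound the presentation growth of the base orbit. The $u_n$ and projective-direction components are handled by Lemma~\ref{lem:cocycle}. The $Y_n$ component obeys the same recurrence as the fiber vector in that lemma over the linear base $t\mapsto t+1$, so it has growth $O(n^2\log(n+2))$. Altogether the growth is $o(d^n)$ (respectively $o(d^{qn})$ under the degree gap).
\item Apply Theorem~\ref{thm:line-bundle} and the equivariant blow-down argument.
\end{enumerate}
Hence each return set is a finite union of arithmetic progressions, and the set in \eqref{eq:joint-return-set}, as a difference of two such sets, is eventually periodic with some period depending on $(h,P,D)$.

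\emph{Uniform period via the sequence ring.} Let $\mathcal R\subseteq\operatorname{Seq}_{\mathbb C}$ be as in the statement; it is finitely generated, reduced and Noetherian. We have $\sigma(\mathcal R)\subseteq\mathcal R$ because $\Psi$ is polynomial. By the previous step, every $f\in\mathcal R$ has eventually periodic zero set. Consequently:
\begin{itemize}
\item A non-zero-divisor of $\mathcal R$ is eventually nonzero. Otherwise its zero set contains a full residue class, and multiplying by the indicator-like element built from a suitable product of elements (as in Wibmer's argument) would exhibit a zero divisor. So $\operatorname{Quot}(\mathcal R)$ embeds in $\operatorname{Seq}_{\mathbb C}$.
\item $\mathcal R$ has finitely many minimal primes $\mathfrak p_1,\ldots,\mathfrak p_s$, so $L=\prod_i\operatorname{Frac}(\mathcal R/\mathfrak p_i)$. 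The primitive idempotents of $L$ are, inside $\operatorname{Seq}_{\mathbb C}$, indicator sequences of eventually periodic sets $E_1,\ldots,E_s$ partitioning $\Nzero$ up to finite sets.
\item The shift preserves non-zero-divisors, since eventual nonvanishing is shift-stable, so it extends injectively to $L$ and permutes the $E_i$.
\item A $\sigma$-stable union of the $E_i$ is shift-invariant modulo finite sets, hence finite or cofinite, hence $\emptyset$ or $\Nzero$. Therefore $\sigma$ acts on the idempotents as a single $s$-cycle, each $E_i$ is eventually one residue class modulo $p:=s$, and we may order them so that $E_j\equiv j\pmod p$.
\end{itemize}
Now for any $g\in\mathcal R$, the component of $g$ in each field $L_j$ is either zero or a unit. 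Being a unit means $g\cdot u=e_j$ in $L$, so $g$ is eventually nonzero on $E_j$. Hence the zero set of every element of $\mathcal R$ is, up to a finite set, a union of classes modulo $p$. Applying this to $\widetilde P(\boldsymbol x_n)$ and $\widetilde D(\boldsymbol x_n)$ gives the uniform period, with the finite exceptional set depending only on $(h,P,D)$.

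\emph{Main obstacle.} The hardest step, I expect, is the last structural claim $L^\sigma=\mathbb C$ together with the embedding of $\operatorname{Quot}(\mathcal R)$ in $\operatorname{Seq}_{\mathbb C}$. A $\sigma$-fixed element must lie in $L_0^{\sigma^p}$ on the first factor, and showing that this is constant requires Wibmer's argument (\cite[proof of Theorem~3.1]{Wibmer2015}). I would try to apply that argument verbatim, the only required input being the eventual periodicity of zero sets of all elements of $\mathcal R$ and of $\mathcal R[c]$ for constants $c$. That input is supplied above, because $f-c$ again has the form $\widetilde P(\boldsymbol x_n)$.
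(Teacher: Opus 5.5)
Your proposal is correct, and its sequence-ring half is essentially the paper's argument. The real difference is how you obtain Dynamical Mordell--Lang for the augmented map. Applied directly to the augmented base $B(t,u,Y)=(t+1,T(u),A(t)Y)$, Lemma~\ref{lem:cocycle} only gives growth $O(n\deg(B)^n)$. That need not be $o(d^n)$, because $\deg B\geq 1+D_A$ can exceed $d$ through $A$.

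You deal with this by opening up the proofs and estimating componentwise: $Y_n$ obeys a cocycle recurrence over the linear base $t\mapsto t+1$ and therefore grows only polynomially. The paper instead passes to an iterate. It notes $\deg(B^q)\le\max\{\deg(T^q),1+qD_A\}$, with $D_A$ the maximal entry degree of $A$. Since $1+qD_A$ is linear in $q$, a large multiple $q$ of any $q_0$ with $\deg(T^{q_0})<d^{q_0}$ satisfies $\deg(B^q)<d^q$. Corollary~\ref{cor:degree-gap} then applies verbatim to the augmented map, in both the affine-linear and the degree-gap cases. Your route gives sharper growth information for the $Y$-part. The paper's route avoids re-verifying Proposition~\ref{prop:blowup} and Theorem~\ref{thm:line-bundle} by hand for the augmented system.

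Two remarks on the second half.

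\emph{The non-zero-divisor step.} You only gesture at this. The clean version is as follows. If $b$ vanishes on an infinite arithmetic progression of difference $q$, then $b\,\sigma(b)\cdots\sigma^{q-1}(b)=0$ in $\operatorname{Seq}_{\mathbb C}$. Take $q$ minimal. Cancelling the non-zero-divisor $b$ gives $\sigma\bigl(b\,\sigma(b)\cdots\sigma^{q-2}(b)\bigr)=0$. Injectivity of $\sigma$ then contradicts minimality.

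\emph{The claimed main obstacle.} What you flag as the main obstacle is not one. Once $L$ embeds in $\operatorname{Seq}_{\mathbb C}$ compatibly with the shift, a $\sigma$-fixed $x\in L$ satisfies $x_{n+1}=x_n$ for all large $n$. It is therefore eventually constant, i.e.\ an element of $\mathbb C$. This is the same observation you already used to rule out nontrivial shift-stable unions of the $E_i$, and it is exactly how the paper gets $L^\sigma=\mathbb C$. No passage to $L_0^{\sigma^p}$ and no appeal to Wibmer's constants argument is needed. As written, your proposal leaves that last claim unproved only because this one-line argument was overlooked.
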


\begin{proof}
The augmented sequence is an orbit of the polynomial map
\[
 G(t,u,\boldsymbol z,Y)
 =\bigl(t+1,T(u),
        \rho(u,\boldsymbol z)M(u)\boldsymbol z,A(t)Y\bigr).
\]
Its base map is
$B(t,u,Y)=(t+1,T(u),A(t)Y)$.
When $T$ is affine-linear, we discard a finite initial segment and
restrict to its stabilized image, where $T$ is an automorphism.
The map $B$ is then \'{e}tale in both cases under consideration.

We put $D_A=\max_{i,j}\deg A_{ij}$, with zero entries omitted.
The matrix multiplying $Y$ in $B^q$ is
$A(t+q-1)\cdots A(t)$, so
$\deg(B^q)\leq \max\{\deg(T^q),\,1+qD_A\}$.
We choose $q_0$ with $\deg(T^{q_0})<d^{q_0}$; for an affine
automorphism, $q_0=1$ suffices. Since
$\deg(T^{kq_0})\leq\deg(T^{q_0})^k$ for positive integers $k$,
sufficiently large multiples
$q=kq_0$ satisfy $\deg(B^q)<d^q$.
The radial order and critical-absorption condition are unchanged by
the augmentation. Thus Corollary~\ref{cor:degree-gap} applies to $G$.
If $\rho$ vanishes identically after restriction to the stabilized
image, the fiber instead becomes zero after one step, and
Theorem~\ref{thm:etale-dml} applies to the remaining base orbit.
Consequently every element of $\mathcal R$ has a zero set that is
a finite union of arithmetic progressions.

We obtain the common period from the sequence-ring argument of
Wibmer \cite[proof of Theorem~3.1]{Wibmer2015} and his appendix to
Ghioca--Xie \cite[Appendix, proof of Theorem~4.2]{GhiocaXieSkewLinear};
the argument is included for completeness.
The ring $\mathcal R$ is reduced, finitely generated over $\mathbb C$,
and stable under the injective shift.
Suppose, for a contradiction, that a non-zero-divisor $b\in\mathcal R$ has infinitely many
zeros. An arithmetic progression of zeros would give
$b\,\sigma(b)\cdots\sigma^{q-1}(b)=0$ for some $q\geq1$; we choose $q$
minimal. Since $b$ is a non-zero-divisor,
$\sigma(b)\cdots\sigma^{q-1}(b)=0$. This product equals
$\sigma\bigl(b\sigma(b)\cdots\sigma^{q-2}(b)\bigr)$, so injectivity of
$\sigma$ contradicts the minimality of $q$. Therefore every
non-zero-divisor of $\mathcal R$ is eventually nonzero, and its termwise
reciprocal defines an element of $\operatorname{Seq}_{\mathbb C}$.

The universal property of localization therefore gives an injective
homomorphism $L=\operatorname{Quot}(\mathcal R)\hookrightarrow
\operatorname{Seq}_{\mathbb C}$. Since $\mathcal R$ is reduced and
Noetherian, its total quotient ring is a finite product of fields.
The shift of an eventually nonzero sequence is eventually nonzero,
so shift preserves non-zero-divisors and extends injectively to $L$.
If there are $p$ primitive idempotents, their images are
$p$ nonzero orthogonal idempotents summing to $1$, so shift permutes
them. The permutation consists of a single cycle. Otherwise, the sum of the
primitive idempotents in a proper union of cycles would be a nontrivial
shift-fixed idempotent. A shift-fixed sequence is eventually constant, and
an eventually constant idempotent is $0$ or $1$; hence no such nontrivial
idempotent exists. The same observation gives $L^\sigma=\mathbb C$.

Each primitive idempotent is fixed by $\sigma^p$.
After deleting finitely many indices, their supports partition the
integers into $p$ nonempty periodic sets, so each support is one
residue class modulo $p$.
On such a class, an element of $L$ is either eventually zero or
eventually nonzero, according as its component in the corresponding
field vanishes or is invertible.
Finally, every shifted coordinate sequence belongs to $\mathcal R$.
We apply this observation to the evaluated numerator and denominator
to obtain the asserted common period for the return set
\eqref{eq:joint-return-set}.
\end{proof}

To express the conclusion for a scalar sequence, we consider the recurrence
\begin{equation}\label{eq:scalar-recurrence}
 b_{n+a}=\sum_{i=0}^{a-1}h_i(n)b_{n+i},
 \qquad
 h_0\in\mathbb C^\times,\quad
 h_1,\ldots,h_{a-1}\in\mathbb C[t].
\end{equation}
Its companion matrix advances the state vector
$(b_n,\ldots,b_{n+a-1})^{\mathsf T}$ by one step: its first $a-1$
rows have a single $1$ just above the main diagonal and zero entries
elsewhere, and its last row is $(h_0(t),\ldots,h_{a-1}(t))$.
It belongs to $\operatorname{GL}_a(\mathbb C[t])$, since its
determinant is $(-1)^{a-1}h_0$. Here we form $Y_n$ using this
companion matrix in the recurrence of
Theorem~\ref{thm:uniform-sml}. Every solution state vector is $Y_n v$ for some
$v\in\mathbb C^a$. The period in Theorem~\ref{thm:uniform-sml}
therefore works for all solutions of this fixed recurrence and all
polynomial relations involving them and the nonlinear orbit.
For example, for every polynomial $H$ and every such solution,
$\{n\geq0:H(n,u_n,\boldsymbol z_n)=b_n\}$
is eventually a union of residue classes modulo the same $p$.
Finite simultaneous conditions and rational expressions are included
by combining polynomial equalities and denominator inequalities.

\section{Examples}
\label{sec:examples}

We present examples covering the one-dimensional specialization, homogeneous
higher-rank ramification, a nonlinear base satisfying the degree gap, and a
nonhomogeneous factor absorbed after two iterates. The notation $V(h)$
denotes the zero set of $h$. Multiplicities refer to exponents in the
critical determinant, and an unspecified $d$ is an arbitrary integer at
least two.

\begin{example}[One fiber coordinate]
We take $r=1$ and $M=1$. Writing $\rho(u,z)=z^{d-1}g(u,z)$ gives
\[
 F(u,z)=\bigl(T(u),z^d g(u,z)\bigr),\qquad
 C=V\!\left(\frac{\partial(z^d g)}{\partial z}\right).
\]
Theorem~\ref{thm:main}
specializes to the one-dimensional fiber family under the same
critical-absorption hypothesis.
\end{example}

\begin{example}[A radial map in every rank]\label{ex:basic-radial}
We take $r\geq2$, a nonzero linear form $L(\boldsymbol z)$
(a homogeneous polynomial of degree one in the fiber coordinates), and
$B\in\operatorname{GL}_r(k)$.  The map
$F(\boldsymbol z)=L(\boldsymbol z)^{d-1}B\boldsymbol z$ on $\A^r$
satisfies Dynamical Mordell--Lang by Corollary~\ref{cor:homogeneous}.
Its vertical critical locus is the hyperplane $L=0$, with multiplicity $r(d-1)$ in the critical
determinant. The entire hyperplane is mapped to the origin in one
step.
\end{example}

\begin{example}[A two-dimensional fiber]
We take $r=2$, $d=2$, $P(x,y)=x$, and $M=I_2$, obtaining
$F(x,y)=(x^2,xy)$.
By \eqref{eq:intro-determinant}, the critical line $x=0$ has
multiplicity two and is contracted to $(0,0)$.
Corollary~\ref{cor:homogeneous} proves Dynamical Mordell--Lang for this map,
which is neither \'{e}tale nor coordinatewise split.
\end{example}

\begin{example}[A nonconstant matrix cocycle]\label{ex:matrix-cocycle}
We take $T(u)=u+1$ and set
$M(u)=\begin{psmallmatrix}1&u\\0&1\end{psmallmatrix}$ and $P(u,x,y)=x+uy$.
Then
$F(u,x,y)= \bigl(u+1,(x+uy)^2,(x+uy)y\bigr)$.
The two fiber coordinates are coupled both through $P$ and through $M(u)$.
Formula \eqref{eq:intro-determinant} identifies the critical hypersurface
as $x+uy=0$, with multiplicity two.  It maps to the zero section in one
step, so Corollary~\ref{cor:homogeneous} gives Dynamical Mordell--Lang
on $\A^3$.
\end{example}

\begin{example}[A nonlinear \'{e}tale base]\label{ex:nonlinear-base}
We fix $a\in k^{\times}$ and consider the H\'{e}non
automorphism
$T(x,y)=(y,y^2-ax)$.
It has degree $2$ and constant Jacobian determinant $a$.  We take
$M(x,y)=\begin{psmallmatrix}1&x\\0&1\end{psmallmatrix}$ and
$P(x,y,z_1,z_2)=z_1^2+yz_1z_2+z_2^2$,
and define
$F(x,y,\boldsymbol z)= \bigl(T(x,y),P(x,y,\boldsymbol z)M(x,y)\boldsymbol z\bigr)$.
Here the radial order is $d=3$, while $\deg T=2<3$.  The polynomial $P$ is
homogeneous of fiber degree two, so its critical hypersurface is absorbed by
the zero section in one step. Corollaries~\ref{cor:degree-gap}
and~\ref{cor:homogeneous} therefore give Dynamical Mordell--Lang on $\A^4$.
This example has both a nonlinear base and a nonconstant matrix cocycle.
\end{example}

\begin{example}[A nonhomogeneous radial factor]\label{ex:nonhomogeneous-radial}
We take $T(u)=2u$ and
$\rho(u,z)=\frac{27}{8}uz(1-uz)$.
For $r=1$ the resulting map is
$F(u,z)=\left(2u,\frac{27}{8}uz^2(1-uz)\right)$.
Its critical locus is
$V(u)\cup V(z)\cup V(3uz-2)$. The first two components are mapped to
the zero section in one step.  If $w=uz$, then
$w\mapsto 27w^2(1-w)/4$.  The remaining critical component has $w=2/3$,
whose successive $w$-coordinates are
$\frac23\longmapsto1\longmapsto0$.
Hence the critical locus is absorbed after two iterates, and
Theorem~\ref{thm:main} applies even though $\rho$ is not homogeneous.
\end{example}

\bibliographystyle{amsalpha}
\bibliography{DML_Complex_Skew_Linear}

\end{document}